\documentclass{amsart}

\usepackage[T1]{fontenc}
\usepackage{mathtools}
\usepackage{subfigure}
\usepackage{graphicx}
\usepackage{caption}
\usepackage{color,soul,amsmath,amssymb,amsthm,amscd}
\usepackage{comment}

\usepackage{xcolor}

\usepackage{amsmath}
\usepackage{amssymb}
\newtheorem{theorem}[subsection]{Theorem}

\newtheorem{corollary}[subsection]{Corollary}

\newtheorem{proposition}[subsection]{Proposition}

\title{Direction-Critical Configurations in Noncentral General Position}
\author{Silvia Fern\'andez-Merchant and Rimma H\"am\"al\"ainen}

\begin{document}
\maketitle

\begin{abstract}
In 1982, Ungar proved that the connecting lines of a set of $n$ noncollinear points in the plane determine at least $2\lfloor n/2 \rfloor$ directions (slopes). Sets achieving this minimum for $n$ odd (even) are called \emph{direction-(near)-critical} and their full classification is still open. To date, there are four known infinite families and over 100 sporadic critical configurations. 
Jamison conjectured that any direction-critical configuration with at least 50 points belongs to those four infinite families. Interestingly, except for a handful of sporadic configurations, all these configurations are centrally symmetric.  We prove Jamison's conjecture, and extend it to the  near-critical case, for centrally symmetric configurations in \emph{noncentral general position}, where only the connecting lines through the center of symmetry may pass through more than two points. As in Ungar's proof, our results are proved in the more general setting of \emph{allowable sequences}. We show that, up to equivalence, the \emph{central signature} of a set uniquely determines a centrally symmetric direction-(near)-critical allowable sequence in noncentral general position, and classify such allowable sequences that are geometrically realizable.
\end{abstract}

\section{Introduction}
In 1970, Scott \cite{S70} proposed the problem of finding the least number of directions (slopes) determined by a set of $n$ points in the plane, not all collinear. He conjectured that the minimum number of different slopes determined by the connecting lines of a set of $n$ noncollinear points in the plane is $2\lfloor n/2 \rfloor$. This conjecture was completely settled by Ungar  in 1982 \cite{UNG}. Sets with $n$ points and $n-1$ directions are called \emph{direction-critical} (or  \emph{slope-critical}); and sets with $n$ points and $n$ directions are called \emph{direction-near-critical}. Hence, the sets achieving the minimum in Ungar's Theorem are the direction-critical configurations, which always have an odd number of points and so we call them \emph{odd-critical}; and the direction near-critical configurations with an even number of points, which we call \emph{even-near-critical}. 

After Ungar's result, the focus naturally switched to the classification of direction (near)-critical configurations. In 1983, Jamison and Hill \cite{JAMH1983} described four infinite families and over a hundred sporadic odd-critical configurations (details in Section \ref{sec:classification}). They noted that the four infinite families and all but four sporadic configurations are centrally symmetric. Jamison conjectured that any odd-critical configuration with at least 50 points belongs to those four families \cite{JAM1984}. He also proved that any near-critical set of points in general position is an affine copy of the vertices of a regular polygon\cite{JAM1984,JAM1986}. In his analysis, Jamison defines a \emph{dividing line} of a set of points $P$ as a line connecting points of $P$ that divides the rest of $P$ in half. These lines are best known as \emph{halving lines} of $P$. The cyclic sequence of positive integers corresponding to half the number of points  per halving line of $P$ minus its center, read in counterclockwise order, is the \emph{central signature} of $P$.

Inspired by this work, we study centrally symmetric configurations. We extend the notion of general position to sets in \emph{non-central general position}, where the only lines possibly passing through more than two points of the set are its halving lines. We completely classify the centrally symmetric configurations in noncentral general position that achieve the minimum in Ungar's Theorem. More precisely, in Theorem \ref{th:noncentral_2}, we prove that all but one centrally symmetric odd-critical configuration of points in noncentral general position belongs to one of the 4 families described in Jamison's Conjecture. All even-near-critical configurations are obtained from the odd-critical by removing their center of symmetry. 

Moreover, as in Ungar's Theorem, our analysis extends to the setting of \emph{allowable sequences} \cite{GP80,GP81} (definition and more details is Section \ref{sec:allow_seq}). Goodman and Pollack \cite{GP84} showed that any allowable sequence corresponds to a \emph{generalized configuration of points}, that is, a set of points together with a pseudoline arrangement so that every pair of points is contained in a unique pseudoline. Note that the set of configurations of points is then a proper subset of the set of generalized configurations of points/allowable sequences. 
In contrast to our geometric classification, we prove in Theorem \ref{th:main_pseudo} that, up to combinatorial equivalence, any cyclic sequence of positive integers is the central signature of a unique even-near-critical and (by adding the center of symmetry) a unique odd-critical centrally symmetric allowable sequence in noncentral general position. Only a few of these allowable sequences are geometrically realizable, as shown by Theorem \ref{th:noncentral_2}.

The paper is organized as follows. In Section \ref{sec:allow_seq}, we present the definition of allowable sequence, related notation and results. In Section \ref{sec:dir_critical_allowable_seq}, 
we determine the structure of any even-near-critical centrally symmetric allowable sequence in noncentral general position in terms of its central signature, Theorem \ref{th:uniqueness} and Corollary \ref{th:uniqueness_general}; and show their existence in Theorem \ref{th:existence}. In Section \ref{sec:classification}, we describe some known results about the classification of odd-critical and even-near critical geometric configurations and state Theorem \ref{th:noncentral_2}, which fully determines the even-near critical, and thus the odd-critical, allowable sequences from Theorem \ref{th:main_pseudo} that are geometrically realizable. The proof of this result follows from Theorems \ref{th:(>2,>1,...)}-\ref{th:(2,1,2)}, that we present in Section \ref{sec:proofs}.

\section{Allowable sequences}\label{sec:allow_seq}
Let $P$ be a set of $n$ points in the plane. The \emph{circular sequence} $\Pi(P)$ associated with $P$ is a doubly-infinite sequence of permutations of the points in $P$  determined by the projections of $P$ onto a line that rotates around a circle enclosing $P$. As an abtract generalization of a circular sequence, an \emph{allowable sequence} of $n$ points is a doubly-infinite periodic sequence of permutations $\Pi=\{\pi_i\}_{i=-\infty}^{\infty}$ of the set of \emph{points} $[n]:=\{1,2,3,\ldots,n\}$, satisfying the following properties:
\begin{enumerate}
    \item By relabeling the points, it can be assumed that $\pi_0=(1,2,3,\ldots, n)$.
    \item There is $h\in \mathbf{Z}^+$ such that $\pi_{i+h}$ is the reversal of $\pi_i$ for every $i\in\mathbf{Z}$. Then $\Pi$ has period $2h$ and $\{\pi_0,\pi_1,\ldots,\pi_h\}$ is a \emph{halfperiod} of $\Pi$ of \emph{length} $h=h(\Pi)$.
    \item $\pi_{i}$ is obtained from $\pi_{i-1}$ by the reversal of one or more disjoint substrings, each involving consecutive elements of $\pi_{i-1}$. These reversals are called \emph{switches}, and the set of switches occuring from $\pi_{i-1}$ to $\pi_i$ is the \emph{$i^{th}$ move} of $\Pi$.
    \item Any pair of points participates in a switch exactly once within a halfperiod.
\end{enumerate}

If $\Pi$ is an allowable sequence on the set of points $[n]$ and $\mathcal{S}\subseteq [n]$, then the \emph{allowable sequence induced by $\mathcal{S}$}, denoted by $\Pi|_{\mathcal{S}}$, is obtained from $\Pi$ by deleting the points not in $\mathcal{S}$ from each permutation and removing repeated permutations.

Allowable sequences were used by Goodman and Pollack to approach combinatorial problems of sets of points in the 1980s \cite{GP80,GP81}. Since circular sequences are especial cases of allowable sequeces, every set of points corresponds to an allowable sequence but not every allowable sequence is the circular sequence of a set of points. In fact, Goodman and Pollack \cite{GP84} showed that up to combinatorial equivalence, there is a one-to-one correspondence between the set of allowable sequences and the set of generalized configurations of points.
In this new setting, all switches occurring between consecutive permutations of an allowable sequence correspond to \emph{pseudolines determining the same direction}. That is, the number of directions determined by an allowable sequence $\Pi$ is the length $h(\Pi)$ of its halfperiod. Ungar proved that if $\Pi$ is an allowable sequence  with $n$ points, then $h(\Pi)\geq 2\lfloor n/2\rfloor$. In other words, the odd-critical and even-near-critical allowable sequences are those with $n$ points and half-period of length $2\lfloor n/2\rfloor$.

Since odd centrally symmetric configurations are precisely those obtained from even ones by adding their center of symmetry, we only consider even configurations. All relevant concepts are naturally extended to allowable sequences. Let $\Pi$ be an allowable sequence of $2n$ points. For a permutation $\pi$ and a point $p$ of $\Pi$, the position of $p$ in $\pi$ is denoted by $\pi(p)$. $\Pi$ is  \emph{centrally symmetric} if for every point $p$ there is a point $\overline{p}$ such that   $\pi(p)+\pi(\overline{p})=2n+1$  for any permutation $\pi\in\Pi$. The points $p$ and  $\overline{p}$ are centrally symmetric and they are said to be \emph{conjugates}. So  $\overline{p}$ is the conjugate of $p$, and $p=\overline{\overline{p}}$ is the conjugate of  $\overline{p}$. The switches reversing a centered substring of a permutation are called \emph{crossing switches}. They correspond to the halving lines of a set of points and they all pass through the center of symmetry of the set.
The sequence $(d_1,d_2,\dots,d_t)$, where $2d_i$ is the number of points reversed by the $i^{th}$ crossing switch in the halfperiod $\pi_0,\pi_1\dots,\pi_h$ of $\Pi$, is the \emph{central signature} of $\Pi$ and $t$ is its \emph{central degree}. Allowable sequences all whose switches are \emph{transpositions} (switches of two points) are said to be in \emph{general position}. Allowable sequences in which all switches, except perhaps for the crossing switches, are transpositions are said to be in \emph{noncentral general position}. (The central signature of an allowable sequence is also called a \emph{crossing distance partition} \cite{FMH}).

We consider the following questions: Which cyclic positive integer sequences are the central signature of an odd-critical or even-near-critical centrally symmetric allowable sequence in noncentral general position? Which of such allowable sequences are \emph{geometrically realizable}, that is, they are the circular sequence of a set of points? 

In the rest of the paper,  $\Pi=\{\pi_i\}_{i\in\mathbb{Z}}$ is an even-near-critical centrally symmetric allowable sequence with $2n$ points in noncentral general position and with central signature $(d_1,d_2,\dots,d_t)$ for some $t\geq 2$. We assume that $\pi_0=(1,2,\dots,n-1,n,\,\overline{n},\overline{n-1},\dots,\overline{2},\overline{1} )$ and that the first crossing switch occurs in the first move (from $\pi_0$ to $\pi_1$).  The $i^{th}$ crossing switch in the halfperiod starting at $\pi_0$ reveres a centered substring $s_i$ of $2d_i$ points, which we call a \emph{crossing substring}. Before reversing, 
\begin{align*}
    s_i&=(s_i(1),s_i(2),\dots, s_i(d_i),s_i(d_i+1),s_i(d_i+2),\dots, s_i(2d_i))\\
    &=(s_i(1),s_i(2),\dots, s_i(d_i),\overline{s_i(d_i)},\overline{s_i(d_i-1)},\dots, \overline{s_i(2d_i)}).
\end{align*}

Figure \ref{fig:321} shows an example of a halfperiod of an even-near-critical centrally symmetric allowable sequence in noncentral general position with $2n=12$ points and central signature $(3,2,1)$. Its reversed crossing substrings are highlighted to easily identify the crossing switches. This sequence is not geometrically realizable. 
\setcounter{MaxMatrixCols}{20}
\begin{figure}[h]
    \centering
\begin{footnotesize}
\begin{equation*}
    \{\pi_i\}_{i=0}^{12}=\begin{pmatrix}
    1 & 2 & 3 & 4 & 5 & 6 & \overline{6} & \overline{5} & \overline{4} & \overline{3} & \overline{2} & \overline{1}\\
1 & 3 & 2 & \colorbox{green!35}{$\overline{4}$} & \colorbox{green!35}{$\overline{5}$} & \colorbox{green!35}{$\overline{6}$} & \colorbox{green!35}{$6$} & \colorbox{green!35}{$5$} & \colorbox{green!35}{$4$} & \overline{2} & \overline{3} & \overline{1}\\
    3 & 1 & \overline{4} & 2 & \overline{5} & \overline{6} & 6 & 5 & \overline{2} & 4 & \overline{1} &\overline{3}\\
    3 & \overline{4} & 1 & \overline{5} & 2 & \overline{6} & 6 & \overline{2} & 5 & \overline{1} & 4 & \overline{3}\\
    \overline{4} & 3 & \overline{5} & 1 & \overline{6} & 2 & \overline{2} & 6 & \overline{1} & 5 & \overline{3} & 4 \\
    \overline{4} & \overline{5} & 3 & \overline{6} & 1 & 2 & \overline{2} & \overline{1} & 6 & \overline{3} & 5 & 4\\
    \overline{4} & \overline{5} & \overline{6} & 3 & \colorbox{green!35}{$\overline{1}$} & \colorbox{green!35}{$\overline{2}$} & \colorbox{green!35}{$2$} & \colorbox{green!35}{$1$} & \overline{3} & 6 & 5 & 4 \\
    \overline{4} & \overline{5} & \overline{6} & \overline{1} & 3 & \overline{2} & 2 & \overline{3} & 1 & 6 & 5 & 4 \\
    \overline{4} & \overline{5} & \overline{1} & \overline{6} & \overline{2} & 3 & \overline{3} & 2 & 6 & 1 & 5 & 4 \\
    \overline{4} & \overline{1} & \overline{5} & \overline{2} & \overline{6} & \colorbox{green!35}{$\overline{3}$} & \colorbox{green!35}{$3$} & 6 & 2 & 5 & 1 & 4\\
    \overline{1} & \overline{4} & \overline{2} & \overline{5} & \overline{3} & \overline{6} & 6 & 3 & 5 & 2 & 4 & 1 \\
    \overline{1} & \overline{2} & \overline{4} & \overline{3} & \overline{5} & \overline{6} & 6 & 5 & 3 & 4 & 2 & 1\\
    \overline{1} & \overline{2} & \overline{3} & \overline{4} & \overline{5} & \overline{6} & 6 & 5 & 4 & 3 & 2 & 1
    \end{pmatrix}
\end{equation*}
\end{footnotesize}
\caption{An even-near-critical centrally symmetric allowable  in noncentral general position and sequence with central signature $(3,2,1)$.}
    \label{fig:321}
\end{figure}
\section{Direction-critical allowable sequences}\label{sec:dir_critical_allowable_seq}
As a consequence of Ungar's proof \cite{UNG}, a centrally symmetric allowable sequence $\Pi$ is even-near-critical if and only if between the $i^{th}$ and $(i+1)^{th}$ crossing switches, there are exactly $d_i+d_{i+1}-1$ permutations. In this case, $n=d_1+d_2+\dots+d_t$. 

In order to understand the structure of the allowable sequences at hand, we define the \emph{path of the point} $p$ in $\Pi$, denoted by $\gamma(p)$, as a sequence of letters $\textsc{c}$, $\textsc{p}$, $\textsc{r}$ and $\textsc{l}$ of length $2n$ that tracks the movement of $p$ through the halfperiod of $\Pi$ starting at $\pi_0$. More precisely, the $i^{th}$ entry of $\gamma (p)$ is 
\begin{itemize}
    \item $\textsc{c}$ if $p$ participates in a crossing switch from $\pi_{i-1}$ to $\pi_i$, this is a \emph{central jump};
    \item $\textsc{p}$ if $p$ does not participate in any switch from $\pi_{i-1}$ to $\pi_i$, this is a \emph{passive jump};
    \item $\textsc{r}$ if the position of $p$ in $\pi_i$ is to the right of that in $\pi_{i-1}$, this is a \emph{right jump};
    \item $\textsc{l}$ if the position of $p$ in $\pi_i$ is to the left of that $\pi_{i-1}$, this is a \emph{left jump}.
\end{itemize}
Similar notation was used by Jamison \cite{JAM1986}. Now we are ready to state our first result.

\begin{theorem}\label{th:uniqueness}
Let $\Pi$ be an even-near-critical centrally symmetric allowable sequence of $n$ points in noncentral general position equipped with the central signature $(d_1,d_2\dots,d_t)$. If the first move contains a crossing switch, then for $1\leq k \leq d_1$,
\begin{align*}
    \gamma(s_1(k))&=\textsc{c}\underbrace{\textsc{p}\cdots \textsc{p}}_{k-1}\underbrace{\textsc{r}\cdots \textsc{r}}_{n-d_1}\underbrace{\textsc{p}\cdots \textsc{p}}_{2(d_1-k)+1}\underbrace{\textsc{l}\cdots \textsc{l}}_{n-d_1}\underbrace{\textsc{p}\cdots \textsc{p}}_{k-1}, \text{ and so}\\
     \gamma(\overline{s_1(k)})&=\textsc{c}\underbrace{\textsc{p}\cdots \textsc{p}}_{k-1}\underbrace{\textsc{l}\cdots \textsc{l}}_{n-d_1}\underbrace{\textsc{p}\cdots \textsc{p}}_{2(d_1-k)+1}\underbrace{\textsc{r}\cdots \textsc{r}}_{n-d_1}\underbrace{\textsc{p}\cdots \textsc{p}}_{k-1}.\nonumber
\end{align*}
\end{theorem}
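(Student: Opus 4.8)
The plan is to compute the position $\pi_i(s_1(k))$ at every move $i$ of the halfperiod and read the path off from these positions; the formula for $\gamma(\overline{s_1(k)})$ then comes for free. Indeed, central symmetry gives $\pi_i(\overline{s_1(k)})=2n+1-\pi_i(s_1(k))$ for all $i$, so the jumps of $\overline{s_1(k)}$ are those of $s_1(k)$ with the roles of \textsc{r} and \textsc{l} interchanged and \textsc{c},\textsc{p} preserved; hence the second identity follows from the first, and I would only treat $p:=s_1(k)$. Since the first crossing switch reverses the centered block $s_1$ and a move is a union of \emph{disjoint} reversals, $p$ participates in no other switch during the first move, so $\gamma(p)$ begins with \textsc{c} and $p$ moves from position $n-d_1+k$ to position $n+d_1+1-k$.

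Next I would fix the global shape. A point and its conjugate are symmetric about the center, so they can reverse their order only in a switch that reverses a centered substring, i.e.\ in a crossing switch; as each pair switches exactly once, every point lies in exactly one crossing block and crosses the center exactly once. Thus $p$ crosses only at the first move, and since all its remaining switches are transpositions (each shifting $p$ by one position) and a transposition across positions $n,n+1$ is itself a crossing switch, $p$ must stay strictly to the right of the center for the rest of the halfperiod. Because disjoint reversals forbid $p$ from switching twice with the same point, the $2d_1$ points of $s_1$ never switch among themselves again, so their left-to-right order is frozen after the first move with $s_1(d_1),\dots,s_1(1)$ lying right of center; in particular $s_1(1),\dots,s_1(k-1)$ stay right of $p$ and $s_1(k+1),\dots,s_1(d_1)$ stay left of $p$ for the whole halfperiod. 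Hence $p$ reaches position at most $2n+1-k$, and its remaining $2(n-d_1)$ switches are exactly one with each point outside $s_1$, namely the points $1,\dots,n-d_1$ and their conjugates.

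Comparing $\pi_1$ with the terminal permutation $\pi_h$ (the reversal of $\pi_0$) shows that each small point $j$ passes from the left of $p$ to its right, contributing one \textsc{l}, and each $\overline{j}$ passes from the right of $p$ to its left, contributing one \textsc{r}; this fixes the multiset of letters as one \textsc{c}, $n-d_1$ \textsc{r}'s, $n-d_1$ \textsc{l}'s and $2d_1-1$ \textsc{p}'s. To order them I would use single crossing: a crossing block is centered, hence closed under conjugation, so $j$ and $\overline{j}$ cross together at one move $m_{i(j)}$; since $p$ is always right of center it can meet $\overline{j}$ only while $\overline{j}$ is still right of center, forcing the \textsc{r}-jump at $\overline{j}$ to occur before $m_{i(j)}$, and it can meet $j$ only after $j$ has crossed, forcing the \textsc{l}-jump at $j$ to occur after $m_{i(j)}$. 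This already makes each \textsc{r}-jump precede the matching \textsc{l}-jump, and I would upgrade it to ``all \textsc{r} before all \textsc{l}'' and to consecutiveness using the rigid spacing of Section~\ref{sec:dir_critical_allowable_seq}: between consecutive crossing switches there are exactly $d_i+d_{i+1}-1$ permutations, so the prescribed crossings leave no slack for idle moves. Tracking positions---$p$ is blocked on the right by $s_1(1),\dots,s_1(k-1)$, which peel off one per move, so the ascent starts after exactly $k-1$ passive jumps, climbs $n-d_1$ steps to $2n+1-k$, and (by the mirror argument near $\pi_h$) descends symmetrically---then yields the claimed path.

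The routine parts are the symmetry reduction, the single-crossing property, and the \textsc{r}/\textsc{l} bookkeeping. The main obstacle is the final step: proving that the right jumps form one uninterrupted block immediately after the initial $k-1$ passive jumps, and symmetrically for the left jumps, i.e.\ that no passive jump is interleaved during the ascent or descent. This is exactly where criticality must be used quantitatively: the count $d_i+d_{i+1}-1$ of intermediate permutations has to be converted, via the frozen order of $s_1$ and the single crossing of each small pair, into the statement that every switch that \emph{can} occur \emph{must} occur, so that $p$ never stalls while a jump is available. I expect this ``no-stalling'' lemma, rather than any single position computation, to be the crux of the argument.
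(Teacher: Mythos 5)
Your setup is sound and overlaps substantially with the paper's: the symmetry reduction to $\gamma(s_1(k))$, the frozen order of $s_1$ after the first move, the fact that $s_1(k)$ subsequently transposes exactly once with each of the $2(n-d_1)$ points outside $s_1$, and hence the multiset count of one \textsc{c}, $n-d_1$ \textsc{r}'s, $n-d_1$ \textsc{l}'s and $2d_1-1$ \textsc{p}'s. But the proposal has a genuine gap, and you say so yourself: the consecutiveness of the \textsc{r}-block and of the \textsc{l}-block (with the passive jumps placed exactly where claimed) is deferred to a ``no-stalling'' lemma that you expect to be the crux and do not prove. Your pairwise statement (each \textsc{r}-jump with $\overline{\jmath}$ precedes the \textsc{l}-jump with $j$) is far weaker than what is needed, and your proposed route to close the gap---the spacing of $d_i+d_{i+1}-1$ permutations between consecutive crossing switches, converted into ``every switch that can occur must occur''---is neither carried out nor the mechanism the result actually rests on.

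The paper closes this gap with two elementary observations you are missing. First, a point in noncentral general position cannot change direction without an intervening passive jump: after a rightward transposition its left neighbor is the point it just switched with, so an immediate leftward transposition would repeat a switch. Second, apply the exact count to the \emph{innermost} point $s_1(d_1)$: of its $2d_1-1$ passive jumps, $d_1-1$ are forced at the start (the frozen-order points $s_1(d_1-1),\dots,s_1(1)$ sitting to its right can only peel off one per move) and $d_1-1$ are forced at the end (the mirror argument), leaving exactly \emph{one} interior passive jump, hence exactly one direction change; this pins down $\gamma(s_1(d_1))$ completely, with no appeal to the inter-crossing spacing. The paths of the remaining $s_1(k)$ are then propagated from $\gamma(s_1(d_1))$ via the frozen order and position bookkeeping, rather than proved by a separate no-stalling principle for each $k$ (note that for $k<d_1$ your claim that the blockers ``peel off one per move'' is itself part of what must be proved, and it follows only after the $k=d_1$ case is settled). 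So the right fix is not a slack-counting argument over crossing switches, but the direction-change/passive-jump observation combined with the exact passive-jump count for the extreme point.
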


\begin{proof}
Consider the point $s_1(d_1)$ (see the Appendix for some examples). After participating in the first crossing switch, $s_1(d_1)$ can only change its position using transpositions (due to noncentral general position).  Also $s_1(d_1)$ cannot participate in a switch again until all other points in $s_1$ that are to the right of $s_1(d_1)$ in $\pi_1$ have moved. Since each of these points do not switch with each other in the rest of the halfperiod, they move one at a time and so $s_1(d_1)$ does not move in the next $d_1-1$ permutations. In other words, after the central jump, there must be $d_1-1$ passive jumps. Similarly, at the end of the halfperiod $s_1(d_1)$ moves back to the symmetric of its original position. Right after this happens, the rest of the points in $s_1$ must return to the symmetric of their original positions, which takes at least $d_1-1$ passive jumps for $s_1(d_1)$. Because there is only $2n$ permutations and $s_1(d_1)$ participates in $1$ crossing switch and $2n-2d_1$ transpositions, then $s_1(d_1)$ participates in exactly $2d_1-1$ passive jumps. That is, besides the $d_1-1$ passive jumps at the beginning and $d_1-1$ at the end of the halfperiod, there is only one more passive jump of $s_1(d_1)$. Note that a passive jump needs to take place before any change of direction of $s_1(d_1)$. This means that $s_1(d_1)$ only changes direction once and so all of its $n-d_1$ right jumps (one transposition with each of the points $\overline{1},\overline{2},\dots,\overline{n-d_1}$) are consecutive followed by a passive jump and then all of its $n-d_1$ left jumps (one transposition with each of the points $1,2,\dots , n-d_1$). Therefore,
    \begin{align*}
    \gamma(s_1(d_1))&=\textsc{c}\underbrace{\textsc{p}\cdots \textsc{p}}_{d_1-1}\underbrace{\textsc{r}\cdots \textsc{r}}_{n-d_1}\underbrace{\textsc{p}}_{1}\underbrace{\textsc{l}\cdots \textsc{l}}_{n-d_1}\underbrace{\textsc{p}\cdots \textsc{p}}_{d_1-1}.
\end{align*}
Note that this means that $\pi_{n+1}(s(d_1))=n+1+(n-d_1)=2n-d_1+1$. 
Since $s_1(d_1),s_1(d_1-1),\dots,s_1(2),s_1(1)$ remain in this order after the first crossing switch, then $s_1(1)$ is in position $2n$ in $\pi_{n+1}$. Then $s_1(k)$ participates in $d_1-k$ passive jumps right after getting to position $2n+1-k$ while waiting for $s_1(d_1-k+1),\dots,s_1(d_1-1),s_1(d_1)$ to get to boxes $2n+1-(k+1),\dots,2n+1-(d_1-1),2n+1-d_1$, respectively; one more passive jump waiting for $s_1(d_1)$ to change directions; and another $d_1-k$ passive jumps while waiting for $s_1(d_1),s_1(d_1-1),\dots,s_1(d_1-k+1)$ to get out of boxes $2n+1-d_1,2n+1-(d_1-1),\dots 2n+1-(k+1)$, respectively. Thus \begin{align*}
    \gamma(s_1(k))&=\textsc{c}\underbrace{\textsc{p}\cdots \textsc{p}}_{k-1}\underbrace{\textsc{r}\cdots \textsc{r}}_{n-d_1}\underbrace{\textsc{p}\cdots \textsc{p}}_{2(d_1-k)+1}\underbrace{\textsc{l}\cdots \textsc{l}}_{n-d_1}\underbrace{\textsc{p}\cdots \textsc{p}}_{k-1}.
\end{align*}
\end{proof}

By central symmetry and periodicity of $\Pi$, Theorem \ref{th:uniqueness} generalizes to paths of points in other crossing switches as follows.

\begin{corollary}\label{th:uniqueness_general}
Let $\Pi$ be an even-near-critical centrally symmetric centrally symmetric allowable sequence of $2n$ points in noncentral general position and with central signature $(d_1,d_2,\dots,d_t)$, and assume that the first move contains a crossing switch. Let $\delta_1=0$, $\delta_2=d_1+d_2$, and for $3\leq i\leq t$ let $\delta_i=d_1+2d_2+\dots+2d_{i-1}+d_i$ so that $s_i$ is reversed from $\pi_{\delta_i}$ to $\pi_{\delta_i+1}$. Then for $1\leq i\leq t$ and $1\leq k\leq d_i$, we have
\begin{equation*}
    \gamma(s_i(k))=
    \begin{cases} 
     \underbrace{\textsc{r}\dots \textsc{r}}_{\delta_i-k+1}\underbrace{\textsc{p}\cdots \textsc{p}}_{k-1}\textsc{c}\underbrace{\textsc{p}\cdots \textsc{p}}_{k-1}\underbrace{\textsc{r}\cdots \textsc{r}}_{n-d_i}\underbrace{\textsc{p}\cdots \textsc{p}}_{2(d_i-k)+1}\underbrace{\textsc{l}\dots \textsc{l}}_{\substack{n-d_i\\-\delta_i+k-1}}
     &\mbox{if } d_i-k<n-\delta_i,\\ 
     \underbrace{\textsc{p}\dots \textsc{p}}_{\substack{d_i-k\\+\delta_i-n+1}}\underbrace{\textsc{r}\dots \textsc{r}}_{n-d_i}\underbrace{\textsc{p}\cdots \textsc{p}}_{k-1}\textsc{c}\underbrace{\textsc{p}\cdots \textsc{p}}_{k-1}\underbrace{\textsc{r}\cdots \textsc{r}}_{n-d_i}\underbrace{\textsc{p}\cdots \textsc{p}}_{\substack{d_i-k\\-\delta_i+n}}
     & \mbox{if } d_i-k\geq |\delta_i-n|,\\
     \underbrace{\textsc{l}\dots \textsc{l}}_{\substack{\delta_i- n\\-d_i+k}}\underbrace{\textsc{p}\cdots \textsc{p}}_{2(d_i-k)+1}\underbrace{\textsc{r}\cdots \textsc{r}}_{n-d_i}\underbrace{\textsc{p}\cdots \textsc{p}}_{k-1}\textsc{c}\underbrace{\textsc{p}\cdots \textsc{p}}_{k-1}\underbrace{\textsc{r}\cdots \textsc{r}}_{2n-\delta_i-k}
     & \mbox{if }  d_i-k<\delta_i-n.
     \end{cases}
\end{equation*}
\end{corollary}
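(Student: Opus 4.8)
The plan is to deduce the statement from Theorem \ref{th:uniqueness} by re-indexing the halfperiod so that it begins at $\pi_{\delta_i}$ rather than $\pi_0$. First I would fix $i$ and regard $\Pi$ as an allowable sequence $\Pi^{(i)}$ with origin $\pi_{\delta_i}$; its halfperiod is $\pi_{\delta_i},\dots,\pi_{\delta_i+2n}$. Central symmetry and noncentral general position are properties of the individual permutations and of the individual moves, so they survive re-indexing; the even-near-critical spacing of $d_j+d_{j+1}-1$ permutations between consecutive crossing switches is likewise preserved, and the central signature simply becomes the cyclic shift $(d_i,d_{i+1},\dots,d_t,d_1,\dots,d_{i-1})$. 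In $\Pi^{(i)}$ the first move $\pi_{\delta_i}\to\pi_{\delta_i+1}$ is precisely the $i^{\text{th}}$ crossing switch, so Theorem \ref{th:uniqueness} applies (its conclusion is a combinatorial word, invariant under any relabeling that puts $\pi_{\delta_i}$ in canonical form), giving the \emph{local} path of $s_i(k)$ measured from $\pi_{\delta_i}$,
\[
\gamma'(s_i(k)) = \textsc{c}\,\textsc{p}^{\,k-1}\,\textsc{r}^{\,n-d_i}\,\textsc{p}^{\,2(d_i-k)+1}\,\textsc{l}^{\,n-d_i}\,\textsc{p}^{\,k-1},
\]
a word of length $2n$ recording moves $\delta_i+1,\dots,\delta_i+2n$.

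Second, I would translate $\gamma'$ into the path $\gamma(s_i(k))$ measured from $\pi_0$, which records moves $1,\dots,2n$. The moves $\delta_i+1,\dots,2n$ lie in both windows, so the last $2n-\delta_i$ letters of $\gamma(s_i(k))$ equal the first $2n-\delta_i$ letters of $\gamma'$. For the initial moves $1,\dots,\delta_i$ I would invoke periodicity: by property (2), $\pi_{j+2n}$ is the reversal of $\pi_j$, so $\pi_{j+2n}(p)=2n+1-\pi_j(p)$. Thus moves $j$ and $j+2n$ are executed by the same switches, but under the global reversal a right jump becomes a left jump and vice versa, while crossing jumps (centered reversals map to centered reversals) and passive jumps are unchanged. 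Consequently the first $\delta_i$ letters of $\gamma(s_i(k))$ are obtained by flipping $\textsc{r}\leftrightarrow\textsc{l}$ (fixing $\textsc{c},\textsc{p}$) in the last $\delta_i$ letters of $\gamma'$. In short, $\gamma(s_i(k))$ is the flipped tail of length $\delta_i$ of $\gamma'$ followed by the untouched prefix of length $2n-\delta_i$ of $\gamma'$.

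Third, I would read off the three cases according to which block of $\gamma'$ the cut point $2n-\delta_i$ lands in. The block boundaries of $\gamma'$ are: the $\textsc{r}$-block ending at position $n-d_i+k$, the central $\textsc{p}$-block ending at position $n+d_i-k+1$, and the $\textsc{l}$-block ending at position $2n-k+1$. The hypothesis $d_i-k<n-\delta_i$ is equivalent to $2n-\delta_i>n+d_i-k$, placing the cut in the $\textsc{l}$-block region (Case 1); $d_i-k\ge|\delta_i-n|$ confines the cut to the central $\textsc{p}$-block (Case 2); and $d_i-k<\delta_i-n$ forces $2n-\delta_i<n-d_i+k$, placing the cut in the $\textsc{r}$-block region (Case 3). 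These three ranges correspond exactly to $\delta_i<n-d_i+k$, $n-d_i+k\le\delta_i\le n+d_i-k$, and $\delta_i>n+d_i-k$, and so partition all admissible $\delta_i$. In each regime the flipped tail supplies a leading run of $\textsc{r}$'s (Cases 1,2) or $\textsc{l}$'s (Case 3) together with the appropriate passive runs, the retained prefix supplies the remainder, and a direct count of the six block lengths reproduces the displayed word of total length $2n$. For $i=1$ we have $\delta_1=0$, the translation is the identity, and the statement collapses back to Theorem \ref{th:uniqueness}.

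I expect the main obstacle to be bookkeeping rather than conceptual. One must confirm that the flip-and-shift yields only non-negative exponents and that the runs join correctly at the seam between the flipped tail and the retained prefix; this is where the inequality $\delta_i\ge d_i\ge k$ (valid for all $i\ge 2$, since $\delta_i\ge d_1+d_2\ge d_i$) is needed to guarantee, e.g., that the leading $\textsc{r}^{\,\delta_i-k+1}$ in Case 1 is well defined. The most delicate point is Case 2, where the central $\textsc{p}$-block of $\gamma'$ is split by the cut and its two fragments must reassemble into the stated $\textsc{p}^{\,d_i-k+\delta_i-n+1}$ and $\textsc{p}^{\,d_i-k-\delta_i+n}$; I would verify consistency at the two boundary values $\delta_i=n-d_i+k-1$ and $\delta_i=n+d_i-k$, where the adjacent cases must agree (the former giving $\textsc{l}^{0}$ in Case 1, the latter giving $\textsc{p}^{1}$ in Case 2). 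A secondary point requiring care is simply the verification, carried out in the first paragraph, that $\Pi^{(i)}$ genuinely inherits every hypothesis of Theorem \ref{th:uniqueness}; this is exactly the content of the phrase ``by central symmetry and periodicity of $\Pi$.''
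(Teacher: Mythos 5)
Your proposal is correct and is essentially the paper's own argument: the paper offers no written proof beyond the sentence that Corollary \ref{th:uniqueness_general} follows from Theorem \ref{th:uniqueness} ``by central symmetry and periodicity of $\Pi$,'' and your re-indexing of the halfperiod at $\pi_{\delta_i}$, applying Theorem \ref{th:uniqueness} there, and translating back via the period-$2n$ reversal (which flips $\textsc{r}\leftrightarrow\textsc{l}$ and fixes $\textsc{c},\textsc{p}$) is precisely that mechanism, with the three cases correctly matched to where the cut $2n-\delta_i$ falls among the blocks of the local path. Your side remark that $\delta_i\geq k$ is needed for the displayed exponents to be nonnegative is also apt, since for $i=1$ and $k\geq 2$ the corollary's Case 1 formula degenerates and the correct path is the one given directly by Theorem \ref{th:uniqueness}.
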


\begin{theorem}\label{th:existence}
Any positive integer sequence $(d_1,...,d_t)$ with $t>1$ is the central signature of an even-near-critical centrally symmetric allowable sequence $\Pi$ in non-central general position. 
\end{theorem}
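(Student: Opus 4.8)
The plan is to convert the forced structure of Corollary \ref{th:uniqueness_general} into an explicit construction and then verify that it is a genuine allowable sequence. Given $(d_1,\dots,d_t)$ with $t>1$, set $n=d_1+\dots+d_t$, label the $2n$ points as $s_i(k)$ and $\overline{s_i(k)}$ for $1\le i\le t$, $1\le k\le d_i$, and take $\pi_0$ to be the standard centered permutation in which the crossing substrings $s_1,\dots,s_t$ occupy the center in the prescribed order. I would then \emph{define} the position $\pi_j(s_i(k))$ for every $j\in\{0,1,\dots,2n\}$ by starting at the initial position and accumulating the moves dictated by the path $\gamma(s_i(k))$ from Corollary \ref{th:uniqueness_general}: each \textsc{r} adds $1$, each \textsc{l} subtracts $1$, each \textsc{p} fixes the position, and the single \textsc{c} (occurring at move $\delta_i+1$) sends position $m$ to $2n+1-m$ within the centered block of $2d_i$ boxes. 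Defining $\pi_j(\overline{s_i(k)})$ by the mirrored path (as in Theorem \ref{th:uniqueness}) builds in the relation $\pi_j(\overline p)=2n+1-\pi_j(p)$, and extending by $\pi_{j+2n}=$ the reversal of $\pi_j$ together with $2n$-periodicity yields a candidate doubly-infinite sequence $\Pi$.

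With the candidate in hand, the verification has four parts. First I would check that each $\pi_j$ is a bijection onto $\{1,\dots,2n\}$, i.e.\ that the accumulated positions are pairwise distinct; central symmetry is already built into the mirrored definition and halves this work. Second — the crux — I would verify that consecutive permutations differ by a \emph{legal} move: a disjoint union of adjacent transpositions, together with exactly one centered block reversal of size $2d_i$ at each move $\delta_i+1$. Third, I would confirm that every unordered pair of points is switched exactly once across the halfperiod (intra-substring and conjugate pairs at the crossing switches, all remaining pairs by a unique transposition), which, together with $\pi_{2n}=$ the reversal of $\pi_0$, certifies that the halfperiod has length $2n$ and hence that $\Pi$ is even-near-critical. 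Finally, the central signature is $(d_1,\dots,d_t)$ by construction, and noncentral general position holds because every non-crossing move consists only of transpositions.

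The main obstacle is the legality check of the second part: showing that at every move the simultaneously moving points pair off into \emph{adjacent}, non-overlapping swaps, with no collision (two points sent to the same box) and no orphan (a point carrying \textsc{r} whose right neighbor is passive or also carries \textsc{r}). My plan is to isolate this in a local interlacing lemma: for each non-crossing move, reading $\pi_{j-1}$ from left to right, every point carrying \textsc{r} at step $j$ is immediately followed by a point carrying \textsc{l} at step $j$, and these $\textsc{r}\textsc{l}$ pairs are disjoint; at a crossing move the same holds outside the central block, while the $2d_i$ central boxes are occupied, in reversible order, exactly by the points of $s_i$. I expect this lemma to follow from the explicit position formulas by tracking, within each phase between two consecutive crossing switches, the coordinated ``waves'' of transpositions, and to be shortened substantially by central symmetry, since the swaps then occur in conjugate pairs so that only half of the configuration must be analyzed. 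The hypothesis $t>1$ enters here in guaranteeing that such transposition phases exist between crossings. Once the interlacing lemma is established, the bijectivity, each-pair-once, and half-period-length claims reduce to routine counting, completing the proof.
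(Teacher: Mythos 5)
Your route is genuinely different from the paper's: the paper proves existence by induction on $d_1+\cdots+d_t$, taking the already-constructed sequence $\Pi'$ for $(d_1-1,d_2,\dots,d_t)$ and splicing a new conjugate pair $p',\overline{p'}$ into it, with an explicit description of which switches constitute each move of the enlarged sequence. You instead propose to synthesize the whole sequence at once from the forced path formulas and then verify the allowable-sequence axioms from scratch. There is no circularity in this plan --- Theorem \ref{th:uniqueness} and Corollary \ref{th:uniqueness_general} are used only to write down a candidate, and you correctly insist that the axioms be checked independently of those results --- so the strategy is legitimate in principle.

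But as written the proposal has a genuine gap: the ``local interlacing lemma'' that you isolate as the main obstacle is exactly the content of the theorem, and you never prove it --- you only say you ``expect'' it to follow from the explicit position formulas. Under your approach, bijectivity, the each-pair-switches-once count, the half-period length, the signature, and noncentral general position really are routine \emph{once legality of every move is known}; what is not routine is that at every move the points carrying \textsc{r} and the points carrying \textsc{l} pair off into disjoint \emph{adjacent} transpositions, and that at each crossing move the central $2d_i$ boxes are occupied exactly by the points of $s_i$ (note such moves can also contain ordinary transpositions outside the central block, as in the first move of Figure \ref{fig:321}, so this case cannot be waved off). This mutual compatibility of the individually-forced paths is precisely what ``an allowable sequence exists'' means; nothing in the uniqueness results gives it for free, since they presuppose existence. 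Carrying it out would require a case analysis over the position and move formulas comparable in bulk to the paper's entire proof, and the paper's induction is engineered to avoid exactly this global check: inserting one new pair into an already-verified sequence localizes every adjacency and pairing verification to the trajectory of the single new point $p'$, everything else being inherited from $\Pi'$. So your plan is viable but incomplete --- the theorem remains unproven until the interlacing lemma is actually established.
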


\begin{proof}
We prove the result by induction on $d_1+d_2+\dots+d_t$. First note that if $d_i=1$ for all $1\leq i\leq t$, then $(d_1,...,d_t)$ is the central signature of the circular sequence of the vertices of a regular polygon, and this is the only possible partition when the sequence's sum is $2$. Let $n\geq 2$ and assume that there is a desired allowable sequence for any sequence with total sum at most $n$. Consider the sequence $d=(d_1,d_2,\dots,d_t)$ of positive integers with $d_1+d_2+\dots+d_t=n+1$. We argued that the result holds if all entries of $d$ are equal to $1$, so assume without loss of generality that $d_1\geq 2$. By induction, the sequence $d'=(d_1-1,d_2,d_3,\dots,d_t)$ is the central signature of an even-near-critical centrally symmetric allowable sequence $\Pi'=\{\pi_i\}_{i\in\mathbb{Z}}$ in noncentral general position. We construct the allowable sequence $\Pi$ from $\Pi'$ by adding a point $p'$ and its conjugate as follows.

Since $\Pi'$ is even-near-critical, then $\{\pi'_0,\pi'_1,\dots,\pi'_{2n}\}$ is a halfperiod. Assume that $\pi'_0=(1,2,\dots,n,\overline{n},\dots,\overline{2},\overline{1})$ and let $s'_1,s'_2,\dots, s'_t$ be the crossing substrings of $\Pi'$. Let $p=s'_1(1)=n-d_1+1$. Let $\pi[a,b]$ be the substring of $\pi$ consisting of all the elements in positions $a,a+1,\dots,b$. For $0\leq i \leq 2$, let $\pi_i[1,n+1]=$
\begin{equation*}
    \begin{cases}
     \pi'_i[1,p]*p'*\pi_i[p+1,n] & \text{ if } i=0,\\
     \pi'_i[1,p]*\overline{p'}*\pi_i[p+1,n] & \text{ if } i=1,2,\\
     \pi'_i[1,\pi_i(\overline{p})+1]*\overline{p'}*\pi_{i-1}[\pi_{i-1}(\overline{p})+1,n] & \text{ if } 3\leq i\leq p,\\
     \overline{p}*\overline{p'}*\pi'_{i-1}[2,n] & \text{ if } p+1\leq i \leq p+2d_1,\\
     \pi'_{i-2}[1,\pi'_{i-2}(\overline{p})+1]*\overline{p'}*\pi'_{i-1}[\pi'_{i-1}(\overline{p})+1,n] & \text{ if } p+2d_1+1\leq i \leq 2n+1,\\
     \pi'_{2n}[1,p]*\overline{p'}*\pi'_{2n}[p+1,n] & \text{ if } i=2(n+1)
    \end{cases}
\end{equation*}
where $*$ denotes the concatenation of strings. 
Finally, in order to guarantee central symmetry, let $\pi_i=\pi_i[1,n+1]*\overline{\pi_i[1,n+1}]$, where $\overline{\pi_i[1,n+1}]$ is the string whose elements are the conjugates of $\pi_i[1,n+1]$ in reversed order.

We first verify that $\Pi$ is an allowable sequence on $2(n+1)$ points with halfperiod $\{\pi_0,\pi_1,\dots,\pi_{2(n+1)}\}$. By Theorem \ref{th:uniqueness}, the paths of $p$ and $\overline{p}$ in $\Pi'$ are given by 
\begin{align*}
    & \gamma(p)=\gamma(s_1(1))= \textsc{c}\underbrace{\textsc{r}\dots \textsc{r}}_{n-d_1}\underbrace{\textsc{p}\dots \textsc{p}}_{2d_1-1}\underbrace{\textsc{l}\dots \textsc{l}}_{n-d_1}, \\
    & \gamma(\overline{p})=\gamma(\overline{s_1(1)})= \textsc{c}\underbrace{\textsc{l}\dots \textsc{l}}_{n-d_1}\underbrace{\textsc{p}\dots \textsc{p}}_{2d_1-1}\underbrace{\textsc{r}\dots \textsc{r}}_{n-d_1}.
\end{align*}
This means that $\pi'_i(\overline{p})=p-i+1$ for each $2\leq i \leq p$ and $\overline{p}$ transposes with some point $q_i$ to obtain $\pi'_i$ from $\pi'_{i-1}$; $\pi'_i(\overline{p})=1$ for each $p\leq i \leq p+2d_1-1$; and $\pi'_i(\overline{p})=i-p-2d_1+2$ for each $p+2d_1\leq i \leq 2n$ and $\overline{p}$ transposes with some point $q_i$ to obtain $\pi'_i$ from $\pi_{i-1}$.

First, $\pi_0[1,n+1]$ is obtained from $\pi'_0[1,n]$ by inserting the new point $p'$ right after the point $p$, so that $\pi_0$ is in fact a permutation of $2(n+1)$ points. Then $\pi_1$ is obtained from $\pi_0$ by the reversal of $s_1=(p,p',p+1,\dots,n,\overline{n},\dots,\overline{p+1},\overline{p'},\overline{p})$, which is a crossing switch of $n-p+2=d_1$ points. The permutation $\pi_2$ is obtained from $\pi_1$ by the same switches needed to obtain $\pi'_i$ from $\pi'_{i-1}$. In other words, $\pi_1[1,n+1]$ is obtained from $\pi'_1[1,n]$ by inserting the point $p'$ right after the point $q_1$.  For $3\leq i \leq p$, $\pi_i[1,n+1]$ is obtained from $\pi_{i-1}[1,n+1]$ by all the switches needed to obtain $\pi'_i[1,n]$ from $\pi'_{i-1}[1,n]$ that occur to the left of $p$, the transposition $(p,q_i)$ (which is also needed to go from $\pi'_{i-1}[1,n]$ to $\pi'_i[1,n]$), the transposition $(p',q_{i-1})$, and all the switches needed to obtained $\pi'_{i-1}[1,n]$  from $\pi'_{i-2}[1,n]$ that occur to the right of $p$. For $p+1\leq i\leq p+2d_1$, $\pi_i[1,n+1]$ is obtained from $\pi_{i-1}[1,n+1]$ by all the switches needed to obtain $\pi'_{i-1}[1,n]$ from $\pi'_{i-2}[1,n]$ that occur to the right of $p$. For $p+2d_1+1\leq i\leq 2(n+1)$, $\pi_i[1,n+1]$ is obtained from $\pi_{i-1}[1,n+1]$ by all the switches needed to obtain $\pi'_{i-2}[1,n]$ from $\pi'_{i-3}[1,n]$ that occur to the left of $p$, the transposition $(p,q_{i-2})$ (which is also needed to go from $\pi'_{i-3}[1,n]$ to $\pi'_{i-2}[1,n]$), the transposition $(p',q_{i-1})$, and all the switches needed to obtained $\pi'_{i-1}[1,n]$  from $\pi'_{i-2}[1,n]$ that occur to the right of $p$. 

Note that all the switches described above are actually possible because in the $(i-1)^{th}$ and $i^{th}$ moves of $\Pi'$ all the switches of $\Pi'$ occurring to the left of $p$  are independent of all the switches to the right of $p$. Moreover, each switch required in the half period $\{\pi'_0,\pi'_1,...,\pi'_{2n}\}$ of $\Pi'$, other than the first crossing switch that reverses $s'_1=(p,p+1,\dots,n,\overline{n},\dots,\overline{p+1},\overline{p})$, is used exactly once in $\Pi$; the first crossing switch of $\Pi'$ reversing $s'_1$ is replaced by the first crossing switch of $\Pi$ which reverses $s_1$ and so it takes care of all reversals of elements in $s'_1$ plus all reversals of $p'$ with each element of $s'_1$; and since $\{q_i:2\neq i\neq p\}=\{1,2,\dots,p-1\}$ and $\{q_i:p+2d_1\leq i\leq 2n\}=\{\overline{1},\overline{2},\dots,\overline{p-1}\}$, then $p'$ transposes exactly once with each of the points of $\Pi$ that are not in $s_1$. Therefore, $\Pi$ is a centrally symmetric allowable sequence. Moreover, $\Pi$ is in noncentral general position because all switches required by the half period $\{\pi_0,\pi_1,\dots,\pi_{2(n+1)}\}$ are transpositions except for $t$ crossing switches: all the crossing switches of $\Pi'$ except for the reversal of $s'_1$ which is replaced by the reversal of $s_1$. That is, $2(n+1)=d_1+d_2+\dots+d_t$ is the central signature of $\Pi$. Finally, since $\Pi$ has $2(n+1)$ points and $\{\pi_0,\pi_1,\dots,\pi_{2(n+1)}\}$ is a half period of $\Pi$ consisting of exactly $2(n+1)$ permutations plus the initial permutation $\pi_0$, then $\Pi$ is even-near-critical. 
\end{proof}
Theorem \ref{th:existence} shows the existence of an allowable sequence for any cyclic sequence $(d_1,d_2,\dots,d_t)$ and Theorem \ref{th:uniqueness} shows the uniqueness, up to combinatorial equivalence, of such a sequence proving the following theorem.

\begin{theorem}\label{th:main_pseudo}
Let $d=(d_1,...,d_t)$ with $t>1$ be a sequence of positive integers. Then, up to combinatorial equivalence, $d$ is the central signature of a unique even-near-critical centrally symmetric allowable sequence $\Pi$ in non-central general position.
\end{theorem}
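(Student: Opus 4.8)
The statement packages existence and uniqueness together, so I would prove each half separately, leaning on the two results already established. Existence is immediate: Theorem~\ref{th:existence} produces, for every positive integer sequence $(d_1,\dots,d_t)$ with $t>1$, an even-near-critical centrally symmetric allowable sequence in noncentral general position whose central signature is exactly $d$. So the only real work is uniqueness, that is, showing any two such sequences $\Pi$ and $\Pi'$ with the same signature $d$ are combinatorially equivalent.

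The plan for uniqueness is to first normalize and then argue that $d$ forces the entire sequence. Combinatorial equivalence lets me relabel points (respecting conjugation) and shift the halfperiod, so I would arrange that in both sequences $\pi_0=\pi_0'=(1,2,\dots,n,\overline{n},\dots,\overline{1})$ and that the first move is a crossing switch. Under this normalization the first crossing substring is forced to be the central $2d_1$ entries of $\pi_0$, so $s_1(k)=n-d_1+k$ in both sequences, and $\Pi,\Pi'$ already agree on $\pi_0,\pi_1$. The heart of the matter is then that the central signature forces the path of \emph{every} point: Theorem~\ref{th:uniqueness} and Corollary~\ref{th:uniqueness_general} give $\gamma(s_i(k))$ as an explicit word in \textsc{c},\textsc{p},\textsc{r},\textsc{l} depending on $d$ alone. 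Once all paths are known, I would reconstruct the permutation sequence move by move: given $\pi_{i-1}$ together with the $i^{\text{th}}$ letters of all paths, the $i^{\text{th}}$ move is determined, since at a crossing move one simply reverses the known central block, while at a non-crossing move the switches are disjoint adjacent transpositions and, reading $\pi_{i-1}$ from left to right, each point carrying an \textsc{r} must pair with the point carrying an \textsc{l} immediately to its right (the patterns are forced because these transpositions are disjoint). Hence $\pi_0$ and the full collection of paths determine $\{\pi_i\}$, giving $\Pi=\Pi'$ after normalization.

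The delicate point, and the step I expect to be the main obstacle, is that Corollary~\ref{th:uniqueness_general} indexes paths by the \emph{roles} $s_i(k)$, whereas the reconstruction needs the label-to-role correspondence, i.e.\ the identity of the points forming the central block at each later crossing $\pi_{\delta_i}$. For $s_1$ this correspondence is pinned by the choice of $\pi_0$, but the labeling budget is now spent, so the members of $s_2,\dots,s_t$ are not free to relabel and their identities must be shown to be forced by the dynamics. To close this gap I would run the construction of Theorem~\ref{th:existence} in reverse as an induction on $n=d_1+\dots+d_t$: choosing any index $j$ with $d_j\ge 2$ (the all-ones case being the base, where the sequence is the regular-polygon sequence), delete $p=s_j(1)$ together with $\overline{p}$ and examine the induced sequence $\Pi|_{[2n]\setminus\{p,\overline{p}\}}$.

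I would verify that this induced sequence is again even-near-critical, centrally symmetric, and in noncentral general position, with central signature $(d_1,\dots,d_j-1,\dots,d_t)$ — precisely the inverse of the insertion in Theorem~\ref{th:existence}, so the halfperiod length and point count each drop by exactly two. By the inductive hypothesis this smaller sequence is unique up to equivalence, so the identities of its crossing blocks, which are exactly the blocks $s_i$ ($i\neq j$) of the original $\Pi$ since $p$ lies only in $s_j$, are determined; this supplies the missing role assignment. Theorem~\ref{th:uniqueness} then forces the path of the deleted pair, hence its position in every permutation, so there is a unique way to reinsert $p,\overline{p}$ and recover $\Pi$. Combining this determination of all trajectories with the move-by-move reconstruction of the previous paragraph yields $\Pi\cong\Pi'$, and together with the existence provided by Theorem~\ref{th:existence} this proves the theorem.
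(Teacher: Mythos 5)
Your existence half and the core of your uniqueness mechanism (normalize, invoke Theorem~\ref{th:uniqueness} and Corollary~\ref{th:uniqueness_general}, reconstruct the permutations from the forced paths) coincide with the paper's argument. The problem is the inductive deletion step you add to close the ``role assignment'' issue: its key claim is false. You assert that for $d_j\ge 2$ the induced sequence $\Pi|_{[2n]\setminus\{p,\overline{p}\}}$, with $p$ in the $j^{\text{th}}$ crossing substring, is again even-near-critical with signature $(d_1,\dots,d_j-1,\dots,d_t)$, ``so the halfperiod length and point count each drop by exactly two.'' The paper itself exhibits a counterexample (Figure~\ref{fig:222_subseq} and the surrounding discussion): deleting $\{1,\overline{1}\}$ or $\{3,\overline{3}\}$ from $\text{DC}^{\,cs}_{ncgen}(2,2,2)$ — and these points are exactly $s_2(1)$ and $s_2(2)$ — yields sequences on $10$ points whose halfperiods still have length $12$, so they are \emph{not} even-near-critical; they are only semispace equivalent to $\text{DC}^{\,cs}_{ncgen}(2,1,2)$, which by Theorem~\ref{th:(2,1,2)} is not even realizable while the deleted subconfigurations are. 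The structural reason is that the insertion in Theorem~\ref{th:existence} is not inverted by restriction: each move of the enlarged sequence interleaves switches coming from \emph{two consecutive} moves of the smaller sequence (plus a transposition of the new point), so after deleting the inserted pair no two consecutive permutations collapse and the number of moves does not decrease. Hence your induction never gets off the ground, and the claimed ``unique reinsertion'' step inherits the same defect.

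The good news is that the gap you were trying to repair is not actually a gap. Corollary~\ref{th:uniqueness_general} gives the \emph{entire} path of the role $s_i(k)$ over the halfperiod, including the prefix preceding its central jump; counting the \textsc{r}'s and \textsc{l}'s in that prefix determines the position of $s_i(k)$ in $\pi_0$ (for instance, $n-d_i+k-(\delta_i-k+1)$ in the first case of the corollary). Since every point lies in exactly one crossing substring (a point and its conjugate must switch exactly once per halfperiod, and only a crossing switch can do this), the role-to-initial-position map is a bijection, so the label-to-role correspondence is forced by the signature and the normalization of $\pi_0$ — no induction is needed. Alternatively, and even more cheaply, combinatorial equivalence permits relabeling each point by its role, after which Corollary~\ref{cor:struct_prop}(\ref{cor_part:positions}) pins down every permutation. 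Either observation completes the uniqueness argument exactly as the paper does, by combining Theorem~\ref{th:existence} with Theorem~\ref{th:uniqueness} and Corollary~\ref{th:uniqueness_general}.
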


Let $\text{DC}^{\,cs}_{ncgen}(d_1,d_2,\dots,d_t)$ be the unique allowable sequence guaranteed by Theorem \ref{th:main_pseudo} with $\pi_0=(12\dots n \,\overline n \dots\overline 2 \,\overline 1)$ and whose first move includes a crossing switch. The following result on the structure of $\text{DC}^{\,cs}_{ncgen}(d_1,d_2,\dots,d_t)$ is a direct consequence of Theorem \ref{th:uniqueness}.

\begin{corollary}\label{cor:struct_prop}
The allowable sequence $\Pi=\text{DC}^{\,cs}_{ncgen}(d_1,d_2,\dots,d_t)$ satisfies:
\begin{enumerate}
    \item \label{cor_part:positions}For $1\leq i\leq t$, $1\leq k\leq d_i$, and $1\leq j\leq n$, we have that 
    \begin{equation}\label{eq:positions}
        \pi_{\delta_i+j}(s_i(k))=
        \begin{cases}
         n-d_i+k & \text{if } j=0,\\
         n+d_i-k+1 & \text{if } 1\leq j\leq k,\\
         n+d_i-2k+j+1 & \text{if } k+1\leq j\leq k+n-d_i,\\
         2n-k+1 & \text{if } k+n-d_i\leq j\leq n+d_i-k+1,\\
         3n-2k+d_i+2& \text{if } n+d_i-k+1\leq j\leq 2n-k+1,\\
         n+d_i-k+1 & \text{if } 2n-k+1\leq j\leq 2n.
        \end{cases}
    \end{equation}
    \item \label{cor_part:central} The crossing switch of $s_i$ is part of move $\delta_i$.
    \item \label{cor_part:transp_mixed} The transpositions  $s_i(k)\overline{s_j(l)}$ and $\overline{s_i(k)}s_j(l)$ with $1\leq i<j\leq t$ are part of the move $(\delta_i+d_{i+1}+d_{i+2}+\dots+d_j+k-l)) \bmod 2n$. 
    \item \label{cor_part:transp} The transpositions $s_i(k)s_j(l)$ and $\overline{s_i(k)}\,\overline{s_j(l)}$ with $1\leq i<j\leq t$ are part of the move $(n+\delta_i+d_i+d_{i+1}+\dots+d_{j-1}+l-k)\bmod 2n$.
\end{enumerate}
\end{corollary}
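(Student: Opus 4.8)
The plan is to derive all four statements directly from the explicit symbolic paths supplied by Theorem~\ref{th:uniqueness} and Corollary~\ref{th:uniqueness_general}, converting each path into a numerical record of positions. I read each symbol of $\gamma(s_i(k))$ as a position-update rule applied to $\pi_m(s_i(k))$ as $m$ increases: a $\textsc{p}$ leaves the position unchanged, an $\textsc{r}$ increments it by $1$, an $\textsc{l}$ decrements it by $1$, and the single $\textsc{c}$ reflects the position inside the centered crossing substring occupying positions $n-d_i+1,\dots,n+d_i$, sending position $p$ to $2n+1-p$.

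For part~(\ref{cor_part:positions}) I anchor the computation at the crossing switch. The reversal of the centered block carries $s_i(k)$ from $\pi_{\delta_i}(s_i(k))=n-d_i+k$ to $\pi_{\delta_i+1}(s_i(k))=n+d_i-k+1$, and accumulating the increments dictated by the surrounding blocks of $\gamma(s_i(k))$ produces the cases of \eqref{eq:positions}. These cases correspond one-to-one with the runs of equal symbols in the path (the leading passive block and crossing symbol, the run of $\textsc{r}$'s, the long passive plateau at the far end, the run of $\textsc{l}$'s, and the trailing passive block). This is a bookkeeping computation; the only care needed is to track where each run begins and ends in terms of $j$. Part~(\ref{cor_part:central}) is then immediate, since Corollary~\ref{th:uniqueness_general} places the reversal of $s_i$ between $\pi_{\delta_i}$ and $\pi_{\delta_i+1}$, which is exactly the move indexed by $\delta_i$.

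For parts~(\ref{cor_part:transp_mixed}) and~(\ref{cor_part:transp}) I use the fact that two points transpose at the unique move where their signed position difference changes sign. Because $s_i(k)$ and $\overline{s_j(l)}$ (and likewise $s_i(k)$ and $s_j(l)$) lie in distinct crossing substrings for $i\neq j$, the only switch in which both participate is their unique common transposition, which by the defining property that each pair switches exactly once in a halfperiod flips their order exactly once. Central symmetry gives $\pi_m(\overline{s_j(l)})=2n+1-\pi_m(s_j(l))$, so the transposition $s_i(k)\overline{s_j(l)}$ occurs precisely when the sum $\pi_m(s_i(k))+\pi_m(s_j(l))$ crosses the value $2n+1$, while $s_i(k)s_j(l)$ occurs precisely when the two position functions $\pi_m(s_i(k))$ and $\pi_m(s_j(l))$ coincide. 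Substituting the piecewise-linear expressions from part~(\ref{cor_part:positions}), with the shifts $\delta_i$ and $\delta_j$, reduces each to a single linear equation in $m$ whose solution is the stated index, and the reduction $\bmod\,2n$ records the wrap-around of the halfperiod.

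The main obstacle I anticipate is the case analysis hidden in this last step: each of $s_i(k)$ and $s_j(l)$ travels along a piecewise-linear trajectory, so I must identify which linear pieces contain the crossing and confirm it falls in the intended range. I would first use the observation that a crossing switch carries a point from the left half (positions $\le n$) to the right half (positions $>n$), since $\pi_{\delta_i}(s_i(k))=n-d_i+k\le n$ and $\pi_{\delta_i+1}(s_i(k))=n+d_i-k+1\ge n+1$, to localize the relevant interval of moves; then I verify monotonicity on that piece so the crossing is unique, consistent with the one-switch-per-pair property. Closing the formula correctly modulo $2n$ is the delicate bookkeeping, but it requires no idea beyond the position record established in part~(\ref{cor_part:positions}).
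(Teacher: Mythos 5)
Your proposal is correct and takes essentially the same route as the paper, which gives no separate argument beyond declaring the corollary ``a direct consequence of Theorem \ref{th:uniqueness}'': translating the paths of Theorem \ref{th:uniqueness} and Corollary \ref{th:uniqueness_general} into piecewise-linear position records and then locating each pair's unique order reversal (via central symmetry for the mixed pairs) is precisely that derivation, and your formulas check out against the paper's examples. Incidentally, carrying out your bookkeeping on the run of $\textsc{l}$'s yields $3n-2k+d_i+2-j$ rather than the constant $3n-2k+d_i+2$ printed in the fifth case of \eqref{eq:positions}, which confirms that case as a typo in the paper rather than an error in your approach.
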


\section{On the classification of geometric configurations}\label{sec:classification}

We start by defining a few families of geometric configurations introduced in \cite{JAM1984}. A \emph{bipencil} is a centrally symmetric configuration all but two of whose points are collinear. 
An \emph{exponential cross} is an affine copy of the following set of $2(s+t+2)+1$ points for some integers $s,t\ge 1, \lambda>1$:
\begin{equation*}
    \text{EX}_{\lambda}(s,t)=\left\{(0,0), (\pm \lambda^{i},0),(0,\pm \lambda^{j}): 0\le i \le s, 0\le j \le t\right\}.
\end{equation*}
A \emph{tricolumnar array} is an affine copy of the following set of $2(r+s+t)+3$ points for some integers $r,s,t\ge 2$:
\begin{equation*}
    \text{TC}(r,s;t)\!=\!
    \left\{(\pm1,k), \left(0,\tfrac{r}{2}\pm i\right),\left(0,\tfrac{r}{2}\pm\left(j-\tfrac{1}{2}\right)\right): 0\leq k \leq r, 0\leq i\leq s, j\in [t] \right\}.
\end{equation*}
Note that these configurations are centrally symmetric with an odd number of points. By removing their center, we obtain $\text{EX}^*_{\lambda}(s,t)$ and $\text{TC}^*(r,s;t)$, which extends these families to even configurations.  Jamison \cite{JAM1984} conjectured that, besides some sporadic configurations, any odd-critical configuration belongs to one of the following four infinite families: (1) even regular polygons with their center, (2) centrally symmetric bipencils, (3) exponential crosses, and (4) tricolumnar arrays.
\begin{figure}
    \centering
    \includegraphics[width=.75\linewidth]{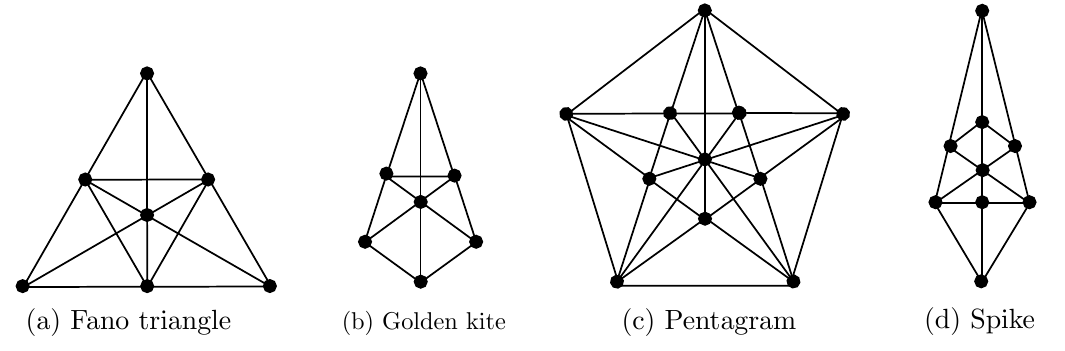}
    \caption{The four known odd-critical configurations that are not centrally symmetric. The coordinates for the spike are $x:0,\pm 1,\pm\sqrt{2}$ and $y:-2-\sqrt{2},-1,0,1/\sqrt{2},\sqrt{2},2(1+\sqrt{2})$. }
    \label{fig:non-centralsym}
\end{figure}
Furthermore, Jamison and Hill \cite{JAMH1983} provided a catalogue of $102$ sporadic odd-critical configurations which do not belong to any of the four infinite families above. Only 4 of them are not centrally symmetric, see Figure \ref{fig:non-centralsym}. Only one of the other sporadic configurations is in noncentral general position, see Figure \ref{fig:Z5_13_6}. Jamison conjectured that each odd-critical configuration of $n\ge 50$ points must belong to one of the four infinite families above. This conjecture remains open (see \cite{JAM1985} for a survey of similar open problems and conjectures) but there are some partial results supporting it. Here are some examples relevant to our work.
\begin{theorem}[Jamison \cite{JAM1984,JAM1986}]
\label{th:Jamison_uniqueness_results}
Let $P$ be a configuration of points. Then
\begin{enumerate}
    \item \label{th_part:Jamison_polygon}If $P$ is a near-critical configuration of points in general position, then $P$ is affinely equivalent to the set of vertices of a regular polygon. 
    \item \label{th_part:Jamison_exp_cross_unique} If $P$ is odd-critical and contained in two lines, then $P$ is affinely equivalent to a bipencil 
    or to an exponential cross. 
    \item \label{th_part:Jamison_tricolumn} If $P$ is odd-critical and contained in three parallel  lines, then $P$ is affinely equivalent to a bipencil or to a tricolumnar array. 
\end{enumerate}
\end{theorem}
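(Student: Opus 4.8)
The plan is to treat the three parts separately, since they sit at opposite extremes of degeneracy, and in each case to translate the ``critical'' hypothesis into a sharp combinatorial count whose extremizers are rigid.

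\textbf{Part (1).} For a near-critical $P$ in general position, every move of its circular sequence $\Pi(P)$ is a set of disjoint transpositions and the halfperiod has the minimal length $n$. I would first show that this extremal $\Pi(P)$ is unique up to combinatorial equivalence --- the general-position (not necessarily centrally symmetric) analogue of Theorem \ref{th:main_pseudo} --- by the same equality analysis that underlies Ungar's inequality; the unique extremal pattern is that of the regular $n$-gon, in which the connecting lines satisfy $p_ip_j\parallel p_kp_l$ precisely when $i+j\equiv k+l\pmod n$. It then remains to upgrade this purely combinatorial parallelism pattern to a metric statement: I would prove an affine-rigidity lemma asserting that any planar points $p_1,\dots,p_n$ realizing these parallelisms form an affine image of the vertices of a regular $n$-gon (convexity then coming for free). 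The parallelisms are linear constraints on the $p_i$, and diagonalizing the cyclic shift $p_i\mapsto p_{i+1}$ shows that their solution space is exactly the set of affine-regular $n$-gons.

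\textbf{Parts (2) and (3).} These I would handle together, reducing the direction count to the size of a sumset. After an affine normalization, in Part (2) the two lines meet at a point $O$, taken as the origin with the lines as the axes; a nonaxis connecting line joins $(x,0)$ to $(0,y)$, and two such lines are parallel exactly when the ratio $y/x$ agrees, so the number of oblique directions equals the size of the quotient set $Y/X$, where $X,Y$ are the sets of nonzero coordinates on the two axes. Taking logarithms turns this into a difference set, and the classical bound $|A-B|\ge |A|+|B|-1$ for finite $A,B\subset\mathbb R$ (with equality iff $A$ and $B$ are arithmetic progressions of equal common difference) shows the oblique directions number at least $|X|+|Y|-1$, with equality iff $X$ and $Y$ are geometric progressions of a common ratio --- precisely the exponential-cross spacing. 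Adding the two axis directions and matching the total to $n-1$ pins down the exponential cross, while the degenerate case in which one axis carries only two points yields the bipencil. In Part (3) the three lines are parallel, taken vertical at $x=-1,0,1$; the slope between two columns is then a difference of $y$-coordinates, so the same sumset bound (now additively, with no logarithm) forces the columns to be arithmetic progressions of a common difference, which is the tricolumnar spacing, again with the bipencil as the degenerate case.

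\textbf{Central symmetry and the main obstacle.} In both (2) and (3) one must further derive that the extremal configuration is centrally symmetric with $O\in P$; I expect this to follow from the equality case of the sumset bound together with the requirement that the several pairwise ratio/difference sets fit together consistently --- in Part (3) the differences across the $(-1,0)$, $(0,1)$, and $(-1,1)$ column pairs must all be simultaneously minimized, which forces the three columns onto a common progression and balances them about a center. The main obstacle is precisely this bookkeeping: sharply characterizing the \emph{joint} extremizers of all the pairwise ratio/difference sets at once, handling the $\pm$ signs from points on either side of $O$ in Part (2) and the interleaved integer/half-integer structure of the middle column in Part (3), and verifying that the resulting total meets $n-1$ with no slack. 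For Part (1) the analogous hard point is the rigidity lemma itself: converting the combinatorial parallelism pattern into the affine-regularity conclusion, which is exactly where the spectral structure of the cyclic shift operator does the essential work.
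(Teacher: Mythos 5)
This statement is not proved in the paper at all: Theorem \ref{th:Jamison_uniqueness_results} is imported from Jamison \cite{JAM1984,JAM1986} with a citation and is used downstream as a black box (in Corollary \ref{cor:bip_expcross} and in the proof of Theorem \ref{th:noncentral_2}). So there is no internal proof to compare yours against; your proposal has to be judged as an attempted reconstruction of Jamison's theorems, and as such it has genuine gaps beyond the obstacles you flag yourself.

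The clearest gap is in Part (1). The assertion that ``the parallelisms are linear constraints on the $p_i$'' is false: parallelism of two chords is a determinant (bilinear) condition, $\det(p_j-p_i,\ p_l-p_k)=0$, and the set of configurations realizing the regular-polygon parallelism pattern is not a linear subspace --- if $P$ and $Q$ both satisfy the pattern, the direction that $P$ assigns to the class $i+j\equiv s \pmod n$ need not agree with the direction $Q$ assigns to it, so $P+Q$ generally violates the constraints. What is a linear space is the set of affine-regular $n$-gons themselves (in complex coordinates, $p_k=\alpha+\beta\omega^k+\gamma\overline{\omega}^{\,k}$, i.e.\ Fourier support $\{0,\pm1\}$); but proving that the parallelism variety equals that subspace is precisely the rigidity lemma, and diagonalizing the cyclic shift cannot do it by itself, since the shift does not act linearly on a nonlinear constraint set. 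Separately, the claimed uniqueness of the extremal circular sequence ``by the same equality analysis that underlies Ungar's inequality'' is substantial unproved work: compare the effort the paper spends on Theorem \ref{th:uniqueness} to get the analogous uniqueness in the much more constrained centrally symmetric, noncentral-general-position setting. In Parts (2) and (3) your reduction and the bound $|A-B|\ge|A|+|B|-1$ with its equality case are sound, but the actual content lies exactly where you stop: the sign bookkeeping for points on both sides of $O$, whether $O\in P$, the joint equality case across the several quotient/difference sets, and --- a point your count ignores --- the possibility that directions arising from different pairs of lines (or an oblique direction and an axis direction) coincide, so the total number of directions is not simply a sum of the pairwise counts. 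Until those are carried out and shown to force the exponential-cross and tricolumnar spacings (including the interleaved half-integer middle column), this is an outline of a plausible strategy, not a proof.
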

The rest of the paper is dedicated to the geometric realizability  of the even-near-critical centrally symmetric allowable sequences in noncentral general position characterized by Theorem \ref{th:main_pseudo}. Our main result, Theorem \ref{th:noncentral_3}, completely classifies these configurations. We prove it in Section \ref{sec:proofs}, it follows from Theorems \ref{th:(>2,>1,...)}-\ref{th:(2,1,2)}.
\begin{theorem}\label{th:noncentral_3}
The allowable sequence $\text{DC}^{\,cs}_{ncgen}(d_1,d_2,\dots,d_t)$ is geometrically realizable, if and only if,
\begin{enumerate}
    \item $d_1=d_2=\cdots=d_t=1$,
    \item $t=2$,
    \item $t=3$ and two entries of $(d_1,d_2,d_3)$ are equal to 1, or 
    \item $t=3$ and $(d_1,d_2,d_3)=(2,2,2)$.
\end{enumerate}
\end{theorem}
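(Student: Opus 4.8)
The plan is to prove the two implications separately: sufficiency by exhibiting, for each of the four listed signature types, an explicit centrally symmetric point set in noncentral general position with the correct central signature; and necessity by reducing every signature \emph{outside} the four types to a short list of non-realizable base signatures, each of which is then ruled out by a direct geometric argument. Throughout, the workhorse is the uniqueness in Theorem~\ref{th:main_pseudo}: any centrally symmetric even-near-critical allowable sequence in noncentral general position is determined by its central signature, so to identify the circular sequence of a candidate point set with $\text{DC}^{\,cs}_{ncgen}(d_1,\dots,d_t)$ it suffices to check that the point set has that signature and lies in noncentral general position.

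For sufficiency I would take, for $(1,\dots,1)$, the $2n$ vertices of a regular $2n$-gon: no three are collinear, each main diagonal is a halving line through the center carrying exactly two points, so the signature is $(1,\dots,1)$. For $t=2$ I would use a bipencil when $\min(d_1,d_2)=1$ and the exponential cross $\text{EX}_{\lambda}(d_1-1,d_2-1)$ when $d_1,d_2\ge 2$; for $t=3$ with two unit entries the corresponding tricolumnar array $\text{TC}^{*}$ (or again a bipencil); and for $(2,2,2)$ the sporadic configuration of Figure~\ref{fig:Z5_13_6}, which the text already singles out as the unique sporadic configuration in noncentral general position. In each case the only real work is to compute the central signature and confirm noncentral general position, after which Theorem~\ref{th:main_pseudo} identifies the circular sequence with the claimed $\text{DC}^{\,cs}_{ncgen}$.

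For necessity, suppose $P$ realizes $\Pi=\text{DC}^{\,cs}_{ncgen}(d_1,\dots,d_t)$. I may assume $t\ge 3$ and $d_i\ge 2$ for some $i$ (otherwise we are already in type (1) or (2)), and after a cyclic shift that $d_1\ge 2$, so $P$ contains the $2d_1\ge 4$ collinear points of the first halving line through the center. The key device is an \emph{inheritance lemma}: every subset of $P$ is realizable, and if a subset is obtained by deleting the two extreme symmetric points from one heavy halving line, then its induced circular sequence is again an even-near-critical centrally symmetric sequence in noncentral general position, of the reduced signature — so by Theorem~\ref{th:main_pseudo} it equals $\text{DC}^{\,cs}_{ncgen}$ of that reduced signature. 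Here the exact position formulas of Corollary~\ref{cor:struct_prop} are used to verify that the induced sequence still has exactly $2(n-1)$ directions (i.e. the gap condition between consecutive crossing switches is preserved). Granting this, non-realizability propagates upward, so it suffices to rule out the \emph{minimal} bad signatures. A short combinatorial check shows that any signature outside the four families reduces, while staying outside them, either to $(2,1,2)$ when $t=3$ or to $(2,1,1,1)$ when $t\ge 4$ (one reduces all but one heavy entry to $1$, brings the surviving heavy entry down to $2$, and prunes excess light lines, always avoiding the good signature $(2,2,2)$ and the two-unit $t=3$ cases).

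It then remains to prove directly that $\text{DC}^{\,cs}_{ncgen}(2,1,2)$ and $\text{DC}^{\,cs}_{ncgen}(2,1,1,1)$ are not geometrically realizable, and this is where I expect the main difficulty to lie; it is carried out in the case analysis of Section~\ref{sec:proofs}. For each I would place the center at the origin, coordinatize the points on the three (respectively four) concurrent halving lines by their signed distances and the line directions, and read off from Corollary~\ref{cor:struct_prop} the full order type, i.e. the betweenness and sign conditions forced on the projection in every direction. The assertion is that this system is infeasible, and rather than solving it I would extract a contradiction from a single three-term Grassmann--Plücker relation on a carefully chosen quintuple whose prescribed oriented-matroid signs are mutually inconsistent (a final-polynomial certificate). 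The essential obstacle — and the reason for the exact boundary in the statement — is that two concurrent lines each carrying four points impose cross-ratio-type interleaving constraints: in the exponential cross ($t=2$) a single multiplicative scaling reconciles the two lines, but a third concurrent line over-determines the configuration, and pinpointing the quadruple or quintuple that certifies this over-determination is the crux. The signatures $(2,2,2)$ and the two-unit $t=3$ cases are precisely the borderline where the extra relation can still be satisfied, which is why they survive as the realizable exceptions.
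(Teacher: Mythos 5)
Your sufficiency direction is fine and matches the paper. The fatal problem is in the necessity direction: your ``inheritance lemma'' is false, and the paper itself contains the counterexample. You claim that deleting the two extreme symmetric points of a heavy halving line from a realization of $\text{DC}^{\,cs}_{ncgen}(d_1,\dots,d_t)$ yields an even-near-critical sequence with the reduced signature, so that non-realizability propagates upward. But $\text{DC}^{\,cs}_{ncgen}(2,2,2)$ \emph{is} realizable (by $(\text{Z}5,12,6)$), while $\text{DC}^{\,cs}_{ncgen}(2,1,2)$ is \emph{not} (Theorem \ref{th:(2,1,2)}); if your lemma held, deleting the extreme pair of the second crossing line of $(\text{Z}5,12,6)$ would produce a realization of $\text{DC}^{\,cs}_{ncgen}(2,1,2)$, a contradiction. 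Concretely, the induced sequence $\Pi_3$ obtained from $\text{DC}^{\,cs}_{ncgen}(2,2,2)$ by deleting $\{3,\overline{3}\}=\{s_2(1),\overline{s_2(1)}\}$ has $10$ points but a halfperiod of length $12$ (Figure \ref{fig:222_subseq}), so it is not even-near-critical and Theorem \ref{th:main_pseudo} cannot be invoked to identify it with $\text{DC}^{\,cs}_{ncgen}(2,1,2)$. The underlying reason is that deleting a conjugate pair kills a direction only if \emph{every} switch of some move involves a deleted point; generically no move is wiped out, the direction count stays at $2n$ rather than dropping to $2(n-1)$, and realizability is simply not monotone under this operation. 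This is exactly the phenomenon the paper flags in its closing section, and it is why your reduction to the ``minimal bad signatures'' $(2,1,2)$ and $(2,1,1,1)$ cannot be set up this way.

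The restriction that does survive is the one the paper uses: for any two crossing substrings, $\Pi|_{s_i\cup s_j}$ \emph{is} even-near-critical (Proposition \ref{prop:extreme_expcross}), hence by Jamison's two-line theorem any realization places those $2(d_i+d_j)$ points on a bipencil or an exponential cross with explicit coordinates (Corollary \ref{cor:bip_expcross}). The paper then works inside the \emph{full} configuration: Corollary \ref{cor:struct_prop} forces specific pairs of connecting lines to be parallel, and in each of Theorems \ref{th:(>2,>1,...)}--\ref{th:(2,1,2)} these parallelism constraints over-determine the coordinates and yield numerical contradictions (e.g.\ $y=\lambda y$ with $\lambda>1$, or $\lambda^2-\lambda-1=0$ forcing a point to coincide with the center). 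So where you hoped to quotient the problem down to two small infeasible signatures, the paper must instead run four separate coordinate arguments covering all signatures outside the list, precisely because no deletion-based reduction is available. Your final-polynomial idea for $(2,1,2)$ and $(2,1,1,1)$ is plausible in spirit (the paper's slope computations are, in effect, such certificates), but without a valid reduction step it would only dispose of those two signatures, not of the general case.
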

Theorem \ref{th:noncentral_3} allows us to prove Jamison's conjecture when restricted to centrally symmetric configurations in non-central general position, and in fact extend it to even configurations in this case. 
\begin{theorem}\label{th:noncentral_2}
Any centrally symmetric odd-critical or even-near-critical set of points in noncentral general position is affinely equivalent to one of the following sets, with or without its center of symmetry:
\begin{enumerate}
    \item the set of vertices of a regular polygon with an even number of sides,
    \item a centrally symmetric bipencil, 
    \item an exponential cross,
    \item a tricolumnar array with $r=1$ and $t-s=0$ or 1.
    \item the configurations $(\text{Z}\,5,12,6)$ and $(\text{Z}\,5,13,6)$ in Figure \ref{fig:Z5_13_6}.
\end{enumerate}
\end{theorem}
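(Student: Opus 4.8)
Since the odd-critical centrally symmetric configurations are exactly the even-near-critical ones with their center of symmetry adjoined, it suffices to classify the even-near-critical sets $P$; the odd case then follows by adjoining or deleting the center. The plan is to pass from $P$ to its central signature, invoke the realizability dichotomy of Theorem \ref{th:noncentral_3}, and identify each surviving signature with a named family using Jamison's uniqueness results (Theorem \ref{th:Jamison_uniqueness_results}). Writing $(d_1,\dots,d_t)$ for the central signature of $P$, Theorem \ref{th:main_pseudo} identifies the circular sequence of $P$ with $\text{DC}^{\,cs}_{ncgen}(d_1,\dots,d_t)$. The first structural observation I would record is that the $t$ crossing substrings partition the $2n$ points, and that in any geometric realization the $2d_i$ points of the $i$-th crossing substring are collinear on a halving line $L_i$ through the center (a centered block can reverse under a single switch only when its points are collinear on a line through the center). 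Hence $P$ lies on the $t$ concurrent lines $L_1,\dots,L_t$ with $|P\cap L_i|=2d_i$, and by Theorem \ref{th:noncentral_3} only the four signature types listed there are realizable.

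I would then treat those four types in turn. If all $d_i=1$, each $L_i$ carries exactly two points, so (noncentral general position forbidding any further line through three points) $P$ is in general position, and by Theorem \ref{th:Jamison_uniqueness_results}(\ref{th_part:Jamison_polygon}) it is an affine regular polygon, necessarily with $2n$ (hence an even number of) vertices. If $t=2$, then $2d_1+2d_2=2n$, so all of $P$ lies on the two lines $L_1,L_2$; by Theorem \ref{th:Jamison_uniqueness_results}(\ref{th_part:Jamison_exp_cross_unique}), $P$ is a centrally symmetric bipencil or an exponential cross, the former exactly when $\min(d_1,d_2)=1$ and the latter, namely $\text{EX}^*_{\lambda}(d_1-1,d_2-1)$, when both entries exceed $1$.

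The substantive case is $t=3$ with two entries equal to $1$, say $(d,1,1)$, where $P$ meets the large halving line $L_1$ in $2d$ points and each of $L_2,L_3$ in a single central pair. Here I would show that the four points off $L_1$ occur as two antipodal pairs at equal perpendicular distance from $L_1$, so that $P$ is contained in three lines parallel to $L_1$. This is the crux, and where I expect the real difficulty: central symmetry alone only pairs each point with its antipode and does not a priori force the two off-pairs to balance about $L_1$, so the combinatorial type of $\text{DC}^{\,cs}_{ncgen}(d,1,1)$ (the cyclic order of all the connecting directions) must be used to rule out the unbalanced positions. Granting the parallel-line reduction, Theorem \ref{th:Jamison_uniqueness_results}(\ref{th_part:Jamison_tricolumn}) makes $P$ a bipencil or a tricolumnar array; the bipencil alternative is excluded because it has central degree $2$, while noncentral general position forces the two side columns to carry only two points each, i.e. $r=1$. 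Matching the signature gives $s+t=d$, and a short computation showing that $\text{TC}(1,s;t)$ is even-near-critical only for $t-s\in\{0,1\}$ singles out the unique realization, which is precisely case (4) and matches the uniqueness guaranteed by Theorem \ref{th:main_pseudo}.

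Finally, for the signature $(2,2,2)$ with $2n=12$, the three concurrent quadruples leave only finitely many ways to place the off-line points consistently with noncentral general position, and I would finish by a direct case check identifying the geometric realizations as exactly the two sporadic configurations $(\text{Z}\,5,12,6)$ and $(\text{Z}\,5,13,6)$ of Figure \ref{fig:Z5_13_6}, verifying that neither lies in any of the infinite families. Assembling the four cases and re-adjoining the center for the odd-critical version yields the stated list. The two steps I expect to absorb essentially all of the effort are the parallel-line reduction in the $(d,1,1)$ case and the explicit identification of the exceptional $(2,2,2)$ realizations.
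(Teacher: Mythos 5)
Your top-level architecture is the same as the paper's: reduce to the even case, invoke Theorem \ref{th:noncentral_3} for the list of realizable signatures, and identify each surviving signature with a named family via Jamison's uniqueness results. The paper, however, obtains the two hard identifications by citing the ``furthermore'' clauses of Theorems \ref{th:(>1,1,1...)} and \ref{th:(2,2,...,2)}, which assert affine uniqueness of the realizations and not merely realizability, whereas you propose to re-derive these and leave both re-derivations incomplete. The first gap, your parallel-line reduction for the signature $(d,1,1)$, is genuine but far easier than you fear: by Corollary \ref{cor:struct_prop}, the move containing the crossing switch of $s_1$ also contains the two transpositions pairing up the four points of $s_2\cup s_3$, and switches belonging to the same move correspond to parallel connecting lines; hence those four points lie on two lines parallel to $L_1$, and $P$ sits on three parallel lines as needed for Theorem \ref{th:Jamison_uniqueness_results}(\ref{th_part:Jamison_tricolumn}). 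The paper records exactly this observation --- the first move of $\text{DC}^{\,cs}_{ncgen}(d_1,1,1)$ consists of three switches reversing all points --- as its alternative route; its primary route is the explicit coordinate computation inside Theorem \ref{th:(>1,1,1...)}.

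The second gap is more serious, because the method you propose would fail as stated. For $(2,2,2)$ there is no a priori finite list of ``ways to place the off-line points'': after normalizing $s_1\cup s_2$ to an exponential cross $\text{EX}^*_\lambda(1,1)$, the parameter $\lambda>1$ and the two points of $s_3$ in the third quadrant still range over a continuum, so a ``direct case check'' has no finite set of cases to check. What rigidifies the configuration is again the parallelism constraints read off from the allowable sequence (same-move switches), which force $\lambda=(1+\sqrt{5})/2$ and explicit coordinates; this is precisely the content of the paper's Theorem \ref{th:(2,2,...,2)}, and without it your final case does not close. A smaller misstatement in the same case: $(\text{Z}5,13,6)$ is not a second realization of the even sequence $\text{DC}^{\,cs}_{ncgen}(2,2,2)$ --- it is $(\text{Z}5,12,6)$ with its center adjoined, so it arises only through the odd-critical case, consistent with the ``with or without its center of symmetry'' phrasing of the statement.
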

\begin{figure}[h]
    \centering
    \includegraphics[width=.24\linewidth]{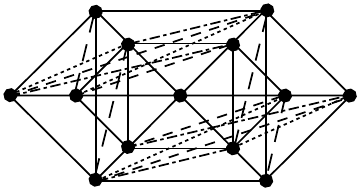}\hspace{.5in } \includegraphics[width=.24\linewidth]{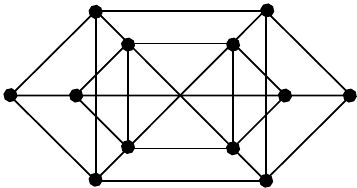}
    \caption{The direction-(near)-critical configurations $(\text{Z}5,13,6)$ and $(\text{Z}5,12,6)$. Their coordinates are $x:0,\pm 1$,$\pm \tau$,$\pm 2$,$\pm 2\tau $ and $y:0,\pm 1,\pm \tau$, where $\tau=(1+\sqrt{5})/2$ is the golden ratio. }
    \label{fig:Z5_13_6}
\end{figure}
\begin{figure}
    \centering
    \includegraphics{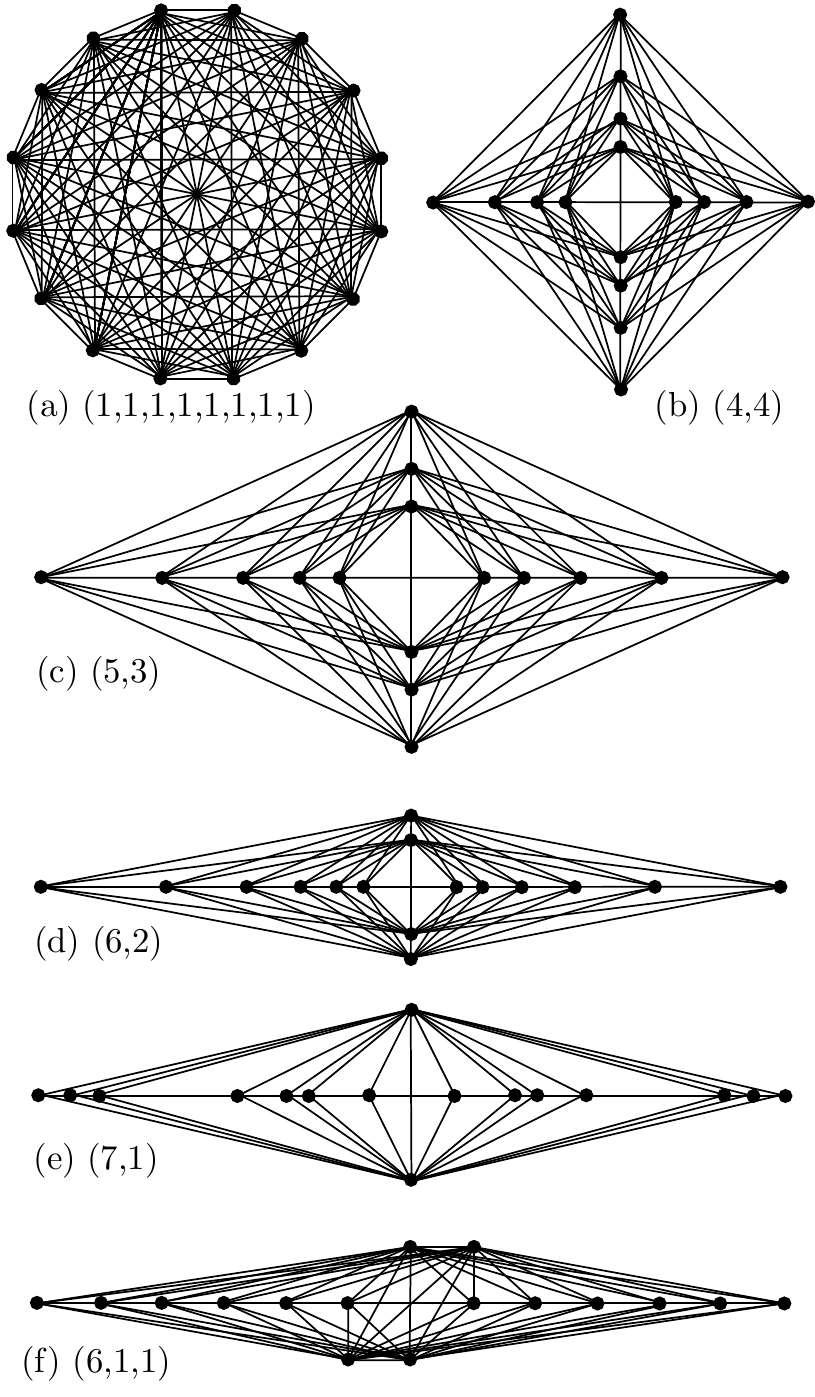}
    \caption{Even-near-critical centrally symmetric geometric realizations with 16 points: (a) regular polygon, (b-d) exponential crosses, (e) bipencil, and (f) tricolumnar array. Their circular sequences are included in the Appendix.}
    \label{fig:geomrealizations}
\end{figure}
\begin{proof}
Let $P$ be a centrally symmetric odd-critical or even-near-critical set of points in noncentral general position. Since adding or removing the center of symmetry from $P$ preserves the central symmetry, criticality, and noncentral general position; we assume that $P$ has an even number of points. Then the circular sequence of $P$ is combinatorially equivalent to $\Pi=\text{DC}^{\,cs}_{ncgen}(d_1,d_2,\dots,d_t)$ for some of the cases listed in Theorem \ref{th:noncentral_2}.

If $d_1=d_2=\dots=d_t=1$, then $\Pi$ corresponds to a configuration in general position and so, by Theorem \ref{th:Jamison_uniqueness_results}(\ref{th_part:Jamison_polygon}), $P$ is affinely equivalent to the set of vertices of a regular polygon with an even number of sides; see Figure \ref{fig:geomrealizations}(a). If $t=2$, then $\Pi$ corresponds to a configuration contained in two lines and so $P$ is a centrally symmetric bipencil when $d_2=1$, see Figure \ref{fig:geomrealizations}(e); or by Theorem \ref{th:Jamison_uniqueness_results}(\ref{th_part:Jamison_exp_cross_unique}) $P$ (technically $P$ plus its center) is affinely equivalent to an exponential cross when $d_2\ge 2$; see Figures \ref{fig:geomrealizations}(b-d). The allowable sequence $\text{DC}^{\,cs}_{ncgen}(d_1,1,1)$ is geometrically realized by the tricolumnar array $\text{TC}^*(1,\lfloor d_1/2\rfloor,\lceil d_1/2\rceil)$, see Figure \ref{fig:geomrealizations}(d). Among other things, Theorem \ref{th:(>1,1,1...)} proves that any geometric realization is affinely equivalent to this tricolumnar array. The uniqueness up to affine equivalence also follows from Theorem \ref{th:Jamison_uniqueness_results}(\ref{th_part:Jamison_tricolumn}) by realizing that the first move of $\text{DC}^{\,cs}_{ncgen}(d_1,1,1)$ consists of three switches reversing all points. Finally, $\text{DC}^{\,cs}_{ncgen}(2,2,2)$ is combinatorially equivalent to the circular sequence of the sporadic configuration $(\text{Z}5,12,6)$ (defined in \cite{JAMH1983} and shown in Figure \ref{fig:Z5_13_6}) without its center, which we denote by $(\text{Z}5,12,6)$. In Theorem \ref{th:(2,2,...,2)}, we show that any geometric realization of $\text{DC}^{\,cs}_{ncgen}(2,2,2)$ is affinely equivalent to $(\text{Z}5,12,6)$.
\end{proof}

\subsection{Geometric realizations}\label{sec:proofs}
Theorem \ref{th:noncentral_3} follows from Theorems \ref{th:(>2,>1,...)}-\ref{th:(2,1,2)} in this section. We start with a result for centrally symmetric even-near-critical allowable sequences in noncentral general position.

\begin{proposition}\label{prop:extreme_expcross} 
Let $\Pi$ be an even-near-critical centrally symmetric allowable sequence of $n$ points in noncentral general position equipped with the central signature $(d_1,d_2\dots,d_t)$. Then
\begin{enumerate}
    \item \label{prop_part:extreme} The extreme points of each crossing line are extreme points of the configuration.
    \item \label{prop_part:expcross}The sequence $\Pi|_{s_i\cup s_{i+1}}$ is combinatorially equivalent to $\text{DC}^{\,cs}_{ncgen}(d_i,d_{i+1})$.
\end{enumerate}
\end{proposition}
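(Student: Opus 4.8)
The plan is to normalize the sequence and then handle the two parts using the structural results already in hand. Up to combinatorial equivalence I may assume $\Pi=\text{DC}^{\,cs}_{ncgen}(d_1,\dots,d_t)$ with $\pi_0=(1,2,\dots,n,\overline n,\dots,\overline 2,\overline 1)$ and first move a crossing switch, so that Theorem~\ref{th:uniqueness}, Corollary~\ref{th:uniqueness_general} and Corollary~\ref{cor:struct_prop} apply verbatim. The first thing I would record is a combinatorial description of the extreme points of the $i$-th crossing line: immediately before the reversal of $s_i$ the $2d_i$ points of $s_i$ occupy the centered block of positions $n-d_i+1,\dots,n+d_i$ in the order $s_i(1),\dots,\overline{s_i(1)}$, so the two endpoints of that line are exactly $s_i(1)$ and $\overline{s_i(1)}$.

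For part~(\ref{prop_part:extreme}) I would use the elementary (sufficient) direction of the standard correspondence: any point that occupies the first or the last position in some permutation of the halfperiod is extreme in the corresponding direction, hence a vertex of the convex hull. It then suffices to exhibit such a permutation for $s_i(1)$ and $\overline{s_i(1)}$. Setting $k=1$ in the position formula of Corollary~\ref{cor:struct_prop}(\ref{cor_part:positions}) (equivalently, reading $\gamma(s_i(1))$ from Theorem~\ref{th:uniqueness} and Corollary~\ref{th:uniqueness_general}), the point $s_i(1)$ leaves its starting position $n-d_i+1$, crosses to position $n+d_i$ at the crossing switch, and then makes $n-d_i$ consecutive right jumps, so that $\pi_{\delta_i+1+n-d_i}(s_i(1))=n+d_i+(n-d_i)=2n$. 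Thus $s_i(1)$ attains the extreme position $2n$, and by central symmetry $\overline{s_i(1)}$ attains position $1$; both are therefore extreme points of the configuration.

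For part~(\ref{prop_part:expcross}), observe first that $s_i\cup s_{i+1}$ is closed under conjugation, so $\Pi|_{s_i\cup s_{i+1}}$ is centrally symmetric. Since $\Pi$ is in noncentral general position, every non-crossing move is a union of disjoint transpositions, and two points swapped by a transposition remain adjacent after deleting the other points, so each non-crossing move restricts either to a transposition or to the identity. Moreover no transposition of $\Pi$ exchanges a point with its own conjugate, because a conjugate pair is symmetric about the center position in every permutation and hence can reverse its order only inside a crossing switch. Consequently the only moves of the restriction that reverse a centered substring are the two inherited from the reversals of $s_i$ and $s_{i+1}$, both of which remain centered contiguous blocks after deletion. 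Hence $\Pi|_{s_i\cup s_{i+1}}$ is a centrally symmetric allowable sequence in noncentral general position with exactly two crossing switches and central signature $(d_i,d_{i+1})$.

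It remains to show this induced sequence is even-near-critical, after which Theorem~\ref{th:main_pseudo} forces it to be combinatorially equivalent to $\text{DC}^{\,cs}_{ncgen}(d_i,d_{i+1})$ by uniqueness. The number of directions of $\Pi|_{s_i\cup s_{i+1}}$ equals the number of distinct moves of $\Pi$ that contain a switch between two points of $s_i\cup s_{i+1}$, so I would account for every such pair. Each pair inside $s_i$ (resp.\ inside $s_{i+1}$) reverses its order in the single block reversal of $s_i$ (resp.\ $s_{i+1}$), contributing the two crossing-switch moves; and by Corollary~\ref{cor:struct_prop}(\ref{cor_part:transp_mixed})-(\ref{cor_part:transp}) each of the $4d_id_{i+1}$ mixed pairs switches in a move whose index is of the form $\delta_i+(d_{i+1}+k-l)$ or $n+\delta_i+(d_i+l-k)$, which lie in two windows of length $d_i+d_{i+1}-1$ (strictly between $\delta_i$ and $\delta_{i+1}$, and its central reflection). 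Thus at most $2+2(d_i+d_{i+1}-1)=2(d_i+d_{i+1})$ moves of $\Pi$ meet $s_i\cup s_{i+1}$, so the induced sequence has at most $2(d_i+d_{i+1})$ directions, and Ungar's bound supplies the reverse inequality, giving even-near-criticality. The main obstacle is exactly this bookkeeping: checking that no mixed-pair transposition falls on a crossing-switch move and that the two windows are disjoint (which uses $d_i+d_{i+1}\le n$, the case $t=2$ being trivial since then $\Pi|_{s_i\cup s_{i+1}}=\Pi$), so that the collapse of repeated permutations lands precisely at the Ungar minimum.
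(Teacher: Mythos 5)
Your proposal is correct and follows essentially the same route as the paper: part~(1) via the position formula of Corollary~\ref{cor:struct_prop}(\ref{cor_part:positions}) with $k=1$ showing $s_i(1)$ and $\overline{s_i(1)}$ reach positions $2n$ and $1$, and part~(2) by confining all switches among points of $s_i\cup s_{i+1}$ to the two crossing moves plus two windows of length $d_i+d_{i+1}-1$ obtained from Corollary~\ref{cor:struct_prop}(\ref{cor_part:transp_mixed})--(\ref{cor_part:transp}). Your explicit appeals to Ungar's bound (which also disposes of the distinctness bookkeeping) and to the uniqueness in Theorem~\ref{th:main_pseudo} are only slightly more detailed versions of steps the paper leaves implicit.
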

\begin{proof}

\noindent (1) We need to check that, for $1\leq i\leq t$, the end points $s_i(1)$ and $\overline{s_i(1)}=s_i(2d_1)$ of the crossing substring $s_i$ visit the first or last position in a permutation of $\Pi$. By Identity \ref{eq:positions}, $\pi_{\delta_i+n \bmod 2n}(s_i(1))=2n$ and $\pi_{\delta_i+n \bmod 2n}(\overline{s_i(1)})=1$.

\noindent (2) The central symmetry and noncentral general position of $\Pi|_{s_i\cup s_{i+1}}$ are clearly inherited from $\Pi$. We only need to show that $\Pi|_{s_i\cup s_j}$ is even-near-critical, that is, it has a halfperiod of length $2(d_i+d_{i+1})$. This is equivalent to proving that there are $2(d_i+d_j)$ moves of $\Pi$ that involve points of $s_i\cup s_j$. 

Suppose without loss of generality that $i<j$. The string $s_i$ is switched in the move $\delta_i$  and $s_j$ is switched in the move $\delta_j$. By Corollary \ref{cor:struct_prop}(\ref{cor_part:transp_mixed}), each transposition involving a point of $s_i[1,d_i]$ and a point of $s_j[d_j+1,2d_j]$, or a point of $s_i[d_i+1,2d_i]$ and a point of $s_j[1,d_j]$, is part of the move $(\delta_i+d_2+d_3+\dots+d_{j-1}-1+m) \bmod 2n$ for some  $2\leq m\leq d_i+d_j$. This is because for $1\leq k\leq d_i$ and $1\leq l\leq d_j$, we have that $\delta_i+d_2+d_3+\dots+d_j+k-l=\delta_i+d_2+d_3+\dots+d_{j-1}-1+k+(d_j+1-l)$ and $k+(d_j+1-l)$ can take any value between 2 and $d_i+d_j$.  Similarly,  Corollary \ref{cor:struct_prop}(\ref{cor_part:transp}) implies that each transposition involving a point of $s_i[1,d_i]$ and a point of $s_j[1,d_j]$, or a point of $s_i[d_i+1,2d_i]$ and a point of $s_j[d_j+1,2d_j]$, is part of the move $(n+\delta_i+d_1+d_2+\dots+d_{j-1}+m) \bmod 2n$ for some  $2\leq m\leq d_i+d_j$. This time $n+\delta_i+d_1+d_2+\dots+d_{j-1}+l-k=n+\delta_i+d_2+\dots+d_{j-1}+(d_i+1-k)+l$ and $(d_i+1-k)+l$ can take any value between $2$ and $d_i+d_j$ when $1\leq k\leq d_i$ and $1\leq l \leq d_j$. Therefore,  all switches involving only points of $s_i\cup s_j$ are part of the $2(d_i+d_j)$ moves $\delta_i,\delta_j,(\delta_i+d_2+d_3+\dots+d_{j-1}-1+m) \bmod 2n,(n+\delta_i+d_2+d_3+\dots+d_{j-1}-1+m) \bmod 2n$ for $2\leq m\leq d_i+d_j$.
\end{proof}

\begin{corollary}\label{cor:bip_expcross}
Suppose that the allowable sequence $\Pi=\text{DC}^{\,cs}_{ncgen}(d_1,d_2,\dots,d_t)$ with $d_i\geq 2$ and $d_j\geq 1$ for some distinct subindices $i,j\in[t]$ is  geometrically realizable. Then in any geometric realization of $\Pi$, the set of points corresponding to $\Pi|_{s_i\cup s_j}$ is a centrally symmetric bipencil if $d_j=1$
and affinely equivalent to the exponential cross $\text{EX}^*_\lambda(d_i-1,d_j-1)$ if $d_j\geq 2$.
\end{corollary}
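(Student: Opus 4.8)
The plan is to reduce the problem to a configuration contained in two lines and then invoke Theorem~\ref{th:Jamison_uniqueness_results}(\ref{th_part:Jamison_exp_cross_unique}). First I would identify the induced sequence. Proposition~\ref{prop:extreme_expcross}(\ref{prop_part:expcross}), whose proof in fact treats an arbitrary pair of indices $i<j$, shows that $\Pi|_{s_i\cup s_j}$ is even-near-critical, centrally symmetric, in noncentral general position, and has central signature $(d_i,d_j)$, hence central degree $2$. By the uniqueness in Theorem~\ref{th:main_pseudo}, $\Pi|_{s_i\cup s_j}$ is therefore combinatorially equivalent to $\text{DC}^{\,cs}_{ncgen}(d_i,d_j)$. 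Since the circular sequence of a point set restricts to the circular sequence of any subset, a geometric realization of $\Pi$ restricts to a geometric realization $Q$ of $\text{DC}^{\,cs}_{ncgen}(d_i,d_j)$, and $Q$ inherits the center of symmetry $O$ of $\Pi$.

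The key geometric step is to show that $Q$ lies on two lines through $O$. In $\text{DC}^{\,cs}_{ncgen}(d_i,d_j)$ the crossing substring $s_i$ is reversed in a single move, so in the realization all $2d_i$ points of $s_i$ exchange their pairwise orders simultaneously. In a circular sequence two points switch exactly in the move associated with the slope of their connecting line, so all $\binom{2d_i}{2}$ pairs inside $s_i$ determine a single direction; since two connecting lines through a common point with the same slope coincide, the points of $s_i$ are collinear on a line $\ell_i$. Because $s_i$ is centered it contains conjugate pairs, whose midpoint is $O$, so $\ell_i$ passes through $O$. The same argument places $s_j$ on a line $\ell_j$ through $O$. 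Thus $Q\subseteq \ell_i\cup\ell_j$, and adjoining $O$ (which lies on both lines) keeps the configuration inside these two lines. The odd configuration $Q\cup\{O\}$ is centrally symmetric and, being obtained from an even-near-critical centrally symmetric set by adding its center, is odd-critical; it is contained in two lines.

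Now Theorem~\ref{th:Jamison_uniqueness_results}(\ref{th_part:Jamison_exp_cross_unique}) applies and gives that $Q\cup\{O\}$ is affinely equivalent to a bipencil or to an exponential cross. To decide which case occurs and to fix the parameters, I would count the points on each line: $\ell_i$ carries the $2d_i$ points of $s_i$ together with $O$, and $\ell_j$ carries the $2d_j$ points of $s_j$ together with $O$. If $d_j=1$, then only the two points $s_j(1),\overline{s_j(1)}$ lie off the collinear set $\ell_i\cup\{O\}$ of size $2d_i+1\geq 5$, so all but two points are collinear and $Q$ is a centrally symmetric bipencil. If $d_j\geq 2$, both lines carry at least four non-central points, which excludes a bipencil (whose off-line part has exactly two points), so $Q\cup\{O\}$ is an exponential cross; matching $2(s+1)=2d_i$ and $2(t+1)=2d_j$ forces $(s,t)=(d_i-1,d_j-1)$, and removing the center yields $Q$ affinely equivalent to $\text{EX}^*_\lambda(d_i-1,d_j-1)$.

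The step I expect to be the main obstacle is the collinearity claim in the second paragraph: translating the combinatorial fact that a crossing switch reverses a block of $2d_i\geq 4$ consecutive points in one move into the geometric statement that every connecting line inside the block has the same slope, and then deducing that the whole block is collinear. Once this is secured, passing to the two lines $\ell_i\cup\ell_j$, applying Jamison's dichotomy, and carrying out the parameter bookkeeping are routine.
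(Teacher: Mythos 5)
Your proposal is correct and takes essentially the same route as the paper's proof: Proposition~\ref{prop:extreme_expcross}(\ref{prop_part:expcross}) to establish that $\Pi|_{s_i\cup s_j}$ is even-near-critical, the observation that the two crossing substrings realize as two lines (through the center), and Jamison's Theorem~\ref{th:Jamison_uniqueness_results}(\ref{th_part:Jamison_exp_cross_unique}) to conclude the bipencil/exponential-cross dichotomy. In fact you spell out steps the paper leaves implicit, namely the collinearity of each realized crossing substring, the adjunction of the center $O$ so that the odd-critical hypothesis of Jamison's theorem applies, and the parameter matching $(s,t)=(d_i-1,d_j-1)$.
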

\begin{proof}
Suppose that $P$ is a geometric realization of $\Pi$. By Proposition \ref{prop:extreme_expcross}(\ref{prop_part:expcross}), the induced allowable sequence $\Pi|_{s_i\cup s_j}$ is also even-near-critical. Note that all points of $\Pi|_{s_i\cup s_j}$ are contained in two lines: the line containing the string $s_i$ (with $2d_i$ points) and the line containing $s_j$ (with $2d_j$ points). Then, by Theorem \ref{th:Jamison_uniqueness_results}(\ref{th_part:Jamison_exp_cross_unique}), the subset of $P$ corresponding to $\Pi|_{s_i\cup s_j}$ is a centrally symmetric bipencil of $2d_i+2$ points when $d_j=1$, and affinely equivalent to the exponential cross $\text{EX}^*_\lambda(d_i-1,d_j-1)$ for some $\lambda>1$ with $2(d_i+d_j)$ points when $d_j\geq 2$.
\end{proof}

In the following proofs, $\Pi$ is an allowable sequence with central signature $\\(d_1,d_2,\cdots,d_t)$ and $P$ is a potential geometric realization of $\Pi$. We use the same labels for the points of $\Pi$ and their realizations in $P$. Also, the subindices of the $d_i$s are taken modulo $t$.

\begin{theorem}\label{th:(>2,>1,...)}
The allowable sequence $\Pi=\text{DC}^{\,cs}_{ncgen}(d_1,d_2,\dots,d_t)$ with $t\geq 3$, $d_i\geq 3$, and $d_j\geq 2$ for some distinct subindices $i,j\in[t]$ is  not geometrically realizable. 
\end{theorem}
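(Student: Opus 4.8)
The plan is to argue by contradiction: assuming $\Pi$ is realized by a point set $P$, I would extract enough rigidity from its two‑line subconfigurations to overdetermine the position of a third crossing line. Since $d_i\ge 3\ge 2$ and $d_j\ge 2$, Corollary \ref{cor:bip_expcross} forces the points of $\Pi|_{s_i\cup s_j}$ to form an exponential cross $\text{EX}^*_\lambda(d_i-1,d_j-1)$. Because affine maps preserve ratios of collinear points and $\text{EX}_\lambda$ uses a common base on both axes, the $2d_i$ points on the line $\ell_i\supseteq s_i$ and the $2d_j$ points on $\ell_j\supseteq s_j$ are each in geometric progression with the same ratio $\lambda$. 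I place the center at the origin, $\ell_i$ on the $x$‑axis with points $(\pm\lambda^a,0)$, $0\le a\le d_i-1$, and $\ell_j$ on the $y$‑axis with points $(0,\pm\lambda^b)$, $0\le b\le d_j-1$. In these coordinates the lines joining $s_i$ to $s_j$ realize exactly the slopes $S_{ij}=\{\pm\lambda^{c}:-(d_i-1)\le c\le d_j-1\}$, of size $2(d_i+d_j)-2$, while $\ell_i,\ell_j$ give the directions $0,\infty$. The same ratio argument, applied to the pair $(s_i,s_m)$, shows that every crossing line $\ell_m$ with $d_m\ge 2$ likewise carries a ratio‑$\lambda$ progression, since it shares $\ell_i$ with an $\text{EX}_\lambda$ cross.

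Next I would localize to three lines. Fix a third crossing substring $s_k$ (available as $t\ge 3$) with line $\ell_k$, neither axis; by Proposition \ref{prop:extreme_expcross}(\ref{prop_part:extreme}) its outermost points lie on the convex hull of $P$. Using the move bookkeeping of Corollary \ref{cor:struct_prop}(\ref{cor_part:transp_mixed})--(\ref{cor_part:transp}) exactly as in the proof of Proposition \ref{prop:extreme_expcross}(\ref{prop_part:expcross}), the induced sequence $\Pi|_{s_i\cup s_j\cup s_k}$ is even‑near‑critical, hence equals $\text{DC}^{\,cs}_{ncgen}(d_i,d_j,d_k)$ and determines exactly $2(d_i+d_j+d_k)$ directions. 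Every connecting line among these points either lies on one of $\ell_i,\ell_j,\ell_k$ (a crossing direction) or joins two of them, so these $2(d_i+d_j+d_k)$ directions are precisely the three crossing directions together with the inter‑line sets $S_{ij},S_{ik},S_{jk}$ (where $S_{ik}$ is the $2d_i$ bipencil slopes from $(s_i,s_k)$ when $d_k=1$, and similarly for $S_{jk}$). Since the individual sizes total $4(d_i+d_j+d_k)-3$ (or the analogous value in the bipencil cases), inclusion--exclusion forces these sets to overlap in about $2(d_i+d_j+d_k)-3$ directions. Crucially, noncentral general position forbids three collinear points off the center, so two coincident inter‑line slopes must come from \emph{distinct} points of the shared line; each coincidence is thus a genuine parallelism between two connectors.

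The endgame, which I expect to be the main obstacle, is to show these forced coincidences are geometrically impossible. The clean model is the small‑third‑line sub‑case (e.g. $d_k=1$, the signature $(3,2,1)$ of Figure \ref{fig:321}): the budget then leaves essentially a single free direction, so among the six slopes $y_0/(x_0\mp\lambda^a)$ of $c_0=(x_0,y_0)\in\ell_k$ to $\ell_i$, none is $0$, at most one is $\infty$, none is the direction $y_0/x_0$ of $\ell_k$, whence at least five of the relations $x_0-\epsilon\lambda^a=\pm y_0\lambda^{-c}$ ($\epsilon\in\{\pm1\}$) must hold; together with the analogous relations on $\ell_j$ this is wildly overdetermined in $(x_0,y_0,\lambda)$. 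Picking three relations at consecutive exponents and using $(x_0-\lambda^{a+1})-(x_0-\lambda^{a})=-\lambda^{a}(\lambda-1)$, I would eliminate $x_0,y_0$ to obtain an equation in powers of $\lambda$ alone and show it forces a degeneracy ($\lambda=1$, or $\ell_k$ coinciding with an axis, or $c_0$ on an axis), contradicting the hypotheses.

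The genuinely hard case is when all three of $d_i,d_j,d_k$ are at least $2$ (e.g. $(3,3,3)$), for which the per‑line budget is not tight. There I would instead exploit the $2(d_i+d_j+d_k)-3$ overlaps directly: by pigeonhole at least one pairwise intersection, say $S_{ij}\cap S_{ik}$, has size $\gtrsim\tfrac{2}{3}(d_i+d_j+d_k)$, and since both sets are pencils through points of the common line $\ell_i$, this yields at least three independent parallelisms between connectors from $\ell_i$ to $\ell_j$ and connectors from $\ell_i$ to $\ell_k$. Translating these (with $S_{ij}=\{\pm\lambda^{c}\}$ fixed and $\ell_k$ carrying its own ratio‑$\lambda$ progression) into algebraic relations among $\lambda$ and the angle and scale of $\ell_k$, one again has more relations than free parameters once $d_i\ge 3$; eliminating them should force the same degeneracy, contradicting $\lambda>1$ and the distinctness of $\ell_i,\ell_j,\ell_k$. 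Completing this elimination cleanly across the sign choices and the sub‑cases $d_k=1$ versus $d_k\ge 2$ is the technical heart of the proof, and it yields that no such $P$ exists, i.e. $\Pi$ is not geometrically realizable.
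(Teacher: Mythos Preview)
Your proposal is not a complete proof: you explicitly leave the ``technical heart'' undone, writing ``I would eliminate'', ``should force'', and ``Completing this elimination cleanly across the sign choices and the sub-cases \ldots\ is the technical heart of the proof''. You also assert that $\Pi|_{s_i\cup s_j\cup s_k}$ is even-near-critical ``exactly as in the proof of Proposition~\ref{prop:extreme_expcross}(\ref{prop_part:expcross})'', but that proposition treats only two crossing substrings; extending it to three requires checking that the three pairwise move sets from Corollary~\ref{cor:struct_prop} overlap in exactly the right way, which you have not done. So the inclusion--exclusion bound you want to exploit is itself unjustified.

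More importantly, you are working far harder than necessary. The paper bypasses all global direction counting. After normalizing $s_1\cup s_j$ (with $d_1\ge 3$, $d_j\ge 2$) to the exponential cross $\text{EX}^*_\lambda(d_1-1,d_j-1)$, it looks at a \emph{single} point of the \emph{adjacent} crossing line, namely $s_{j+1}(d_{j+1})=(-x,-y)$ with $x,y>0$. Two applications of Corollary~\ref{cor:struct_prop}(\ref{cor_part:transp_mixed}) give the parallelisms
\[
\overline{s_1(1)}\,s_{j+1}(d_{j+1})\parallel \overline{s_1(2)}\,s_j(1)
\qquad\text{and}\qquad
\overline{s_1(2)}\,s_{j+1}(d_{j+1})\parallel \overline{s_1(3)}\,s_j(1),
\]
which in coordinates become
\[
\frac{\lambda^{d_1-2}}{\lambda^{d_j-1}}=\frac{\lambda^{d_1-1}-y}{x},
\qquad
\frac{\lambda^{d_1-3}}{\lambda^{d_j-1}}=\frac{\lambda^{d_1-2}-y}{x}.
\]
Multiplying the second by $\lambda$ and comparing gives $y=\lambda y$, a contradiction since $y>0$ and $\lambda>1$. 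That is the whole argument. The hypothesis $d_i\ge 3$ is exactly what makes the three points $s_1(1),s_1(2),s_1(3)$ available, and $d_j\ge 2$ is what forces $s_1\cup s_j$ to be an exponential cross (so that $\lambda>1$) rather than a bipencil. No three-line criticality, no pigeonhole, no case split on $d_k$ is needed; you never even look at more than one point of the third line.
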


\begin{proof}
Without loss of generality, assume that $d_1\ge 3$ and $d_i\geq d_j \ge 2$ for some $2\leq j< t$. Suppose by contradiction that there is a geometric realization $P$ of $\Pi$. By Corollary \ref{cor:bip_expcross} (and applying an affine transformation to $P$ if necessary), we can assume that the subset of $P$ corresponding to $\Pi|_{s_1\cup s_j}$ is precisely  $\text{EX}^*_\lambda(d_1-1,d_j-1)$ for some $\lambda>1$, that is,
\begin{align*}
    & \ell_1 ~:~ s_1(k)=(0, \lambda^{d_1-k}) \text{ and }\overline{s_1(k)}=(0, -\lambda^{d_1-k})~~\text{for } 0\leq k< d_1\\
    & \ell_j ~:~ s_j(l)=( -\lambda^{d_j-l},0)\text{ and } \overline{s_j(l)}=( \lambda^{d_j-l},0)~~\text{for } 0\leq l< d_j.
\end{align*}
By Corollary \ref{cor:struct_prop}(\ref{cor_part:transp_mixed}), the transposition $\overline{s_1(1)}s_{j+1}(d_{j+1})$ occurs in move $\delta_1+d_2+d_3+\dots+d_{j+1}+1-d_{j+1}=d_2+d_3+\dots+d_j+1$ and the transposition $\overline{s_1(2)}s_j(1)$ occurs in move $\delta_1+d_2+d_3+\dots+d_j+1-1=d_2+d_3+\dots+d_j+1$. Thus the lines $\overline{s_1(1)}s_{j+1}(d_{j+1})$ and $\overline{s_1(2)}s_j(1)$ are parallel. Similarly, the transposition $\overline{s_1(2)}s_{j+1}(d_{j+1})$ occurs in move $\delta_1+d_2+d_3+\dots+d_{j+1}+2-d_{j+1}=d_2+d_3+\dots+d_j+2$ and the transposition $\overline{s_1(3)}s_j(1)$ occurs in move $\delta_1+d_2+d_3+\dots+d_j+3-1=d_2+d_3+\dots+d_j+2$. Thus the lines $\overline{s_1(2)}s_{j+1}(d_{j+1})$ and $\overline{s_1(3)}s_j(1)$ are parallel. 

Since $s_{j+1}$ is reversed after $s_j$, $s_{j+1}$ is in the third quadrant and so it has coordinates $(-x,-y)$ for some $x,y>0$. Equaling the slopes of each pair of parallel lines, we have

\begin{equation*}
    \frac{\lambda^{d_1-2}}{\lambda^{d_j-1}}=\frac{\lambda^{d_1-1}-y}{x} ~~ \text{ and } ~~ \frac{\lambda^{d_1-3}}{\lambda^{d_j-1}}=\frac{\lambda^{d_1-2}-y}{x}.
\end{equation*}
Multiplying the second identity by $\lambda$, the identities imply that $y=y\lambda$. This is impossible because $y>0$ and $\lambda>1$.
\end{proof}

\begin{theorem}\label{th:(>1,1,1...)}
The allowable sequence $\Pi=\text{DC}^{\,cs}_{ncgen}(d_1,d_2,\dots,d_t)$ with $t\geq 3$, $d_i\geq 2$, and $d_{i+1}=d_{i+2}=1$ for some $i\in [t]$ is  geometrically realizable if and only if $t=3$. Furthermore, any realization of $\Pi$ is affinely equivalent to $\text{TC}^{\,*}(1,\lfloor d_i/2\rfloor,\lceil d_i/2\rceil)$.
\end{theorem}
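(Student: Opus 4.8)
The plan is to prove both directions from a single coordinate computation that pins down the geometry of the three crossing lines $s_1,s_2,s_3$, and then to exploit a fourth crossing line when $t\ge 4$. After the cyclic relabeling that starts the halfperiod at the crossing switch of $s_i$, I take $i=1$, so $d_1\ge 2$ and $d_2=d_3=1$. Assuming a realization $P$ exists, I place the crossing line $\ell_1$ of $s_1$ along the $y$-axis with center at the origin (legitimate since $\ell_1$ is a halving line through the center and, by Proposition \ref{prop:extreme_expcross}(\ref{prop_part:extreme}), $s_1(1)$ is extreme), writing $s_1(k)=(0,a_k)$ with $a_1>\cdots>a_{d_1}>0$. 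By Corollary \ref{cor:bip_expcross} the sets $\Pi|_{s_1\cup s_2}$ and $\Pi|_{s_1\cup s_3}$ are bipencils, so $s_2(1)=(p,q)$ and $s_3(1)=(u,v)$ lie off $\ell_1$. The engine of the whole argument is to read off, from Corollary \ref{cor:struct_prop}(\ref{cor_part:transp_mixed})--(\ref{cor_part:transp}), that the transpositions $s_1(k)\overline{s_2(1)}$ and $s_1(k-1)\overline{s_3(1)}$ occur in a common move (so the corresponding lines are parallel), and likewise $s_1(k)s_2(1)$ and $s_1(k+1)s_3(1)$; equating the two slopes and subtracting the resulting relations forces $u=p$ and $a_k-a_{k+1}=q-v=:\Delta>0$ for every $k$. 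Thus $s_2(1),s_3(1)$ share the vertical line $x=p$, their conjugates share $x=-p$, and $s_1$ is an arithmetic progression on $x=0$.

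For the direction $t=3$, these are all $2n=2d_1+4$ points, so $P$ lies in the three parallel lines $x\in\{-p,0,p\}$. Adjoining the center gives an odd-critical set in three parallel lines, which by Theorem \ref{th:Jamison_uniqueness_results}(\ref{th_part:Jamison_tricolumn}) is affinely a bipencil or a tricolumnar array; the bipencil is excluded because four points ($s_2(1),s_3(1)$ and their conjugates) lie off the central line while $d_1\ge 2$. Matching the outer columns (two points each, forcing the parameter $r=1$) and the central arithmetic progression of $2d_1$ points (forcing the interleaving $s=\lfloor d_1/2\rfloor$, $t=\lceil d_1/2\rceil$) identifies $P$ with $\text{TC}^{\,*}(1,\lfloor d_1/2\rfloor,\lceil d_1/2\rceil)$, which gives the ``furthermore''. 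Conversely, this tricolumnar array is in noncentral general position with central signature $(d_1,1,1)$, so by the uniqueness in Theorem \ref{th:main_pseudo} its circular sequence equals $\text{DC}^{\,cs}_{ncgen}(d_1,1,1)$, establishing realizability when $t=3$.

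For $t\ge 4$, I fix a crossing line $s_4$ distinct from $s_1,s_2,s_3$ and derive a contradiction with noncentral general position. If $d_4=1$, the analogous pair of parallelisms relating $s_1$ to $s_3$ and $s_4$ again yields symmetric index shifts, and subtracting forces $s_4(1)$ onto the line $x=p$; but $x=p$ already contains $s_2(1),s_3(1)$ and is not a crossing line, a contradiction. If $d_4\ge 2$, then $d_1,d_4\ge 2$, so whenever some entry is $\ge 3$ the non-realizability is already supplied by Theorem \ref{th:(>2,>1,...)} (pairing that entry with a distinct entry $\ge 2$). It remains to treat $d_1=d_4=2$ with every entry at most $2$. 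Here Corollary \ref{cor:bip_expcross} makes $\Pi|_{s_1\cup s_4}$ an exponential cross $\text{EX}^{\,*}_\mu(1,1)$, so $s_4(1)=\mu w$ and $s_4(2)=w$ for the vector $w$ spanning the second arm and the common ratio $\mu>1$; substituting into the two parallelisms $s_2(1)s_4(2)\parallel s_3(1)s_4(1)$ and $s_2(1)\overline{s_4(1)}\parallel s_3(1)\overline{s_4(2)}$ from Corollary \ref{cor:struct_prop}, and subtracting, collapses everything to $w_x\,\Delta\,(\mu+1)=0$, hence $w_x=0$. Then the second arm lies on the $y$-axis $=\ell_1$, forcing the distinct crossing lines $s_1$ and $s_4$ to coincide, which is impossible.

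The delicate point is the bookkeeping in the engine computation: verifying from Corollary \ref{cor:struct_prop} exactly which transpositions share a move, so that the two $s_1$-index shifts come out as $-1$ and $+1$ --- the symmetric shifts that make the subtraction collapse to $u=p$ and to a single common difference $\Delta$ --- and checking that the admissible ranges of $k$ are already nonempty for $d_1=2$. The second obstacle is organizing the $t\ge 4$ analysis so that every pair $(d_1,d_4)$ is covered: the cases $d_4=1$ and $d_1=d_4=2$ are settled by the explicit parallelisms above, while every remaining subcase with $d_4\ge 2$ contains an entry $\ge 3$ and is absorbed by Theorem \ref{th:(>2,>1,...)}. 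I expect the $d_1=d_4=2$ slope computation to be the most error-prone step, since it is the only subcase that does not reduce to a prior theorem and must instead produce the explicit nonzero factor $\Delta(\mu+1)$.
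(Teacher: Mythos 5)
Your proof of the realizability equivalence (``realizable iff $t=3$'') is correct, and in the $t\ge 4$ direction it takes a genuinely different route from the paper. I checked your ``engine'': the pairs $s_1(k)s_2(1),\,s_1(k+1)s_3(1)$ (both in move $n+d_1+1-k$) and $\overline{s_1(k+1)}s_2(1),\,\overline{s_1(k)}s_3(1)$ (both in move $k+1$) do share moves by Corollary \ref{cor:struct_prop}(\ref{cor_part:transp})--(\ref{cor_part:transp_mixed}), and the subtraction does give $u=p$ and $a_k-a_{k+1}=q-v=\Delta$ for all $k\in[d_1-1]$, a nonempty range already when $d_1=2$. Your three cases for $t\ge4$ also check out: for $d_4=1$ the shifted engine forces $s_4(1)$ onto the non-central line $x=p$; any configuration with an entry $\ge 3$ is absorbed by Theorem \ref{th:(>2,>1,...)}; and in the case $d_1=d_4=2$ the two pairings you use ($s_2(1)s_4(2)$ with $s_3(1)s_4(1)$, and $s_2(1)\overline{s_4(1)}$ with $s_3(1)\overline{s_4(2)}$) do share moves, and the determinant computation collapses to $w_x(\mu+1)\Delta=0$ as claimed. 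The paper instead disposes of $t\ge4$ with a single parallelism, pairing the crossing switch of $s_3$ (move $\delta_3$) with the transposition $\overline{s_3(1)}s_4(d_4)$; but by Corollary \ref{cor:struct_prop}(\ref{cor_part:transp_mixed}) that transposition sits in move $\delta_3+1$, not $\delta_3$, so your longer argument, which never relies on that pairing, is actually the more solid of the two here.

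The genuine gap is in the ``furthermore'' clause. Your engine proves only that \emph{each half} of the middle column is an arithmetic progression: $a_k-a_{k+1}=\Delta$ for $k\in[d_1-1]$. Nothing you derive ties $a_{d_1}$ (the distance from the innermost point to the center) to $\Delta$, yet your parameter matching rests on ``the central arithmetic progression of $2d_1$ points.'' The constraints you have are equally consistent with, say, $a_k=\bigl(2(d_1-k)+1\bigr)\Delta/2$, i.e.\ a center gap of $\Delta$ rather than $2\Delta$; since the ratio of the center gap to the step is an affine invariant of the middle line, such a configuration is \emph{not} affinely equivalent to $\text{TC}^{\,*}(1,\lfloor d_1/2\rfloor,\lceil d_1/2\rceil)$. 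So after invoking Theorem \ref{th:Jamison_uniqueness_results}(\ref{th_part:Jamison_tricolumn}) you can only conclude affine equivalence to \emph{some} tricolumnar array with $r=1$ and $s+t'=d_1$, and the interleaving is not forced. The paper closes exactly this hole with a third parallelism you never use: the crossing switch of $s_2$ occurs in move $\delta_2=d_1+1$, and the transposition $\overline{s_1(d_1)}s_3(1)$ occurs in move $\delta_1+d_2+d_3+d_1-1=d_1+1$ as well, so $\overline{s_1(d_1)}s_3(1)\parallel\ell_2$. In your coordinates this reads $(v+a_{d_1})/p=q/p$, i.e.\ $a_{d_1}=q-v=\Delta$, which is precisely the centered-progression property your identification needs. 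Add that one relation and your uniqueness argument goes through; without it, the ``furthermore'' is not established.
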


\begin{proof}

Without loss of generality, assume that $d_1\ge 2$ and $d_2=d_3=1$. Suppose by contradiction that there is a geometric realization $P$ of $\Pi$. By Corollary \ref{cor:bip_expcross}, we can assume that the subset of $P$ corresponding to $\Pi|_{s_1\cup s_2}$ is the centrally symmetric bipencil
\begin{align*}
    & \ell_1 ~:~ s_1(d_1+1-k)=(0, \lambda_k) \text{ and } ~\overline{s_1(d_1+1-k)}=(0, -\lambda_k)\\& \hspace{.5in}\text{for } k\in[d_1], 1=\lambda_1<\lambda_2<\dots < \lambda_{d_1},\\
    & \ell_2 ~:~ s_2(1)=(-1,0)\text{ and }  \overline{s_2(1)}=(1,0).
\end{align*}

Consider $k\in[d_1]$. By Corollary \ref{cor:struct_prop}(\ref{cor_part:transp}), the transposition $s_1(d_1+1-k)s_3(1)$ occurs in move $n+\delta_1+d_1+d_2+1-d_1-1+k=n+1+k$, and the transposition $s_1(d_1-k)s_2(1)$ occurs in move $n+\delta_1+d_1+1-d_1+k=n+1+k$. Thus the lines $s_1(d_1+1-k)s_3(1)$ and $s_1(d_1-k)s_2(1)$ are parallel. By Corollary \ref{cor:struct_prop}(\ref{cor_part:transp_mixed}), the transposition $\overline{s_1(d_1-k-1)}s_3(1)$ occurs in move $\delta_1+d_2+d_3+d_1-k-1-1=d_1-k$ and the transposition $\overline{s_1(d_1-k)}s_2(1)$ occurs in move $\delta_1+d_2+d_1-k-1=d_1-k$. Thus the lines $\overline{s_1(d_1-k-1)}s_3(1)$  and $\overline{s_1(d_1-k)}s_2(1)$ are parallel. Similarly, the transposition  $\overline{s_1(d_1)}s_3(1)$ occurs in move $\delta_1+d_2+d_3+d_1-1=d_1+1$. Also, by Corollary \ref{cor:struct_prop}(\ref{cor_part:central}), $s_2$ reverses in move $\delta_2=d_1+d_2=d_1+1$. 
Thus the lines $\overline{s_1(d_1)}s_3(1)$ and $\ell_2$ are parallel. This means that $s_3(1)=(-a,-1)$. Moreover, since $s_3$ is reversed after $s_1$ and $s_2$, then $s_3(1)$ is in the third quadrant and so $a>0$.
Equaling the slopes of each pair of parallel lines, we obtain
\begin{equation*}
    \frac{\lambda_k+1}{a}=\frac{\lambda_{k+1}}{1} ~~ \text{ and } ~~ \frac{1-\lambda_{k+1}}{a}=\frac{-\lambda_k}{1}.
\end{equation*}
This means that $a=(\lambda_k+1)/\lambda_{k+1}=(\lambda_{k+1}-1)/\lambda_k$ for any $k\in[d_1]$ with $\lambda_1=1$. Solving this recursion gives $a=1$ and $\lambda_k=k$ for any $k\in[d_1]$.
Note that the subconfiguration of $2(d_1+2)$ points determined by $s_1,s_2$, and $s_3$ corresponds to the points
\begin{center}
\begin{align*}
    & \ell_1 ~:~ (0, \pm k) ~~ \text{ for }~~ k\in[d_1],\\
    & \ell_2 ~:~ (\pm1,0),\\
    & \ell_3 ~:~ \pm(1,1).
\end{align*}
\end{center}
This configuration is actually direction-critical. It is similar to the tricolumnar array $\text{TC}^*(\lfloor d_1/2\rfloor,\lceil d_1/2\rceil)$, see Figure \ref{fig:geomrealizations}. This means that any geometric realization of $\text{DC}^{\,cs}_{ncgen}(d_1,1,1)$ with $d_1\geq 2$ is affinely equivalent to $\text{TC}^*(1,\lfloor d_1/2\rfloor,\lceil d_1/2\rceil)$.

Now assume by contradiction that $t\geq 4$ and consider the point $s_4(d_4)$. By Corollary \ref{cor:struct_prop}(\ref{cor_part:central}), $s_3$ reverses in move $\delta_3=d_1+2d_2+d_3=d_1+3$. By Corollary \ref{cor:struct_prop}(\ref{cor_part:transp_mixed}), the transposition $\overline{s_3(1)}s_4(d_4)$ occurs in move $\delta_3+d_4+1-d_4=d_1+2d_2+d_3+1=d_1+3$. Thus the lines $\ell_3$ and $\overline{s_3(1)}s_4(d_4)$ are parallel. But $\overline{s_3(1)}$ is on $\ell_3$ and so the lines are actually equal. This means that $s_4(d_4)\in\ell_3$, which is impossible.
\end{proof}

\begin{theorem}\label{th:(2,2,...,2)}
The allowable sequence $\Pi=\text{DC}^{\,cs}_{ncgen}(d_1,d_2,\dots,d_t)$ with $t\geq 3$ and $d_i=d_{i+1}=d_{i+2}=2$ for some $i\in[t]$ is geometrically realizable if and only if $t=3$. Moreover, any realization of $\text{DC}^{\,cs}_{ncgen}(2,2,2)$ is affinely equivalent to $(\text{Z}\,5,12,6)$.
\end{theorem}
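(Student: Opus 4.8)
The plan is to handle both directions of the equivalence through a single rigidity computation for three consecutive crossing substrings. After a cyclic relabeling of the crossing switches (legitimate by periodicity and combinatorial equivalence), I would assume $i=1$, so that $d_1=d_2=d_3=2$ and $n=6+\sum_{m\ge 4}d_m$. By Corollary~\ref{cor:bip_expcross} the pair $s_1\cup s_2$ realizes $\text{EX}^*_\lambda(1,1)$, so after an affine map I would fix the frame $s_1(1)=(0,\lambda),\ s_1(2)=(0,1),\ s_2(1)=(-\lambda,0),\ s_2(2)=(-1,0)$ together with their conjugates, making $\ell_1$ vertical (move $0$) and $\ell_2$ horizontal (move $4$). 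Using Corollary~\ref{cor:struct_prop}(\ref{cor_part:transp_mixed})--(\ref{cor_part:transp}) I would then record the direction of each cross line as a function of its move: moves $1,2,3$ carry slopes $-\lambda,-1,-1/\lambda$ and moves $n+1,n+2,n+3$ carry slopes $1/\lambda,1,\lambda$. The essential point is that these assignments are \emph{independent of} $n$.

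Next I would locate $s_3$ by intersecting lines of known direction. Since $\overline{s_1(1)}\,s_3(1)$ lies in move $4$ (horizontal) and $s_1(2)\,s_3(1)$ lies in move $n+3$ (slope $\lambda$), intersecting gives $s_3(1)=(-1-1/\lambda,\,-\lambda)$; likewise $\overline{s_1(2)}\,s_3(2)$ in move $4$ and $\overline{s_1(1)}\,s_3(2)$ in move $3$ (slope $-1/\lambda$) give $s_3(2)=(\lambda-\lambda^2,\,-1)$. Both points are forced and $n$-independent. Because $s_3(1),s_3(2)$ and the center of symmetry lie on the crossing line $\ell_3$, collinearity yields
\[
\lambda^4-\lambda^3-\lambda-1=(\lambda^2-\lambda-1)(\lambda^2+1)=0,
\]
and since $\lambda>1$ is real this forces $\lambda=\tau=(1+\sqrt5)/2$. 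The entire sub-configuration is thereby rigid, with $s_3(1)=(-\tau,-\tau)$ and $s_3(2)=(-1,-1)$.

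With this golden sub-configuration in hand I would split on $t$. If $t=3$ then $n=6$ and the twelve resulting points are, up to affine equivalence, exactly the sporadic configuration $(\text{Z}5,12,6)$ of \cite{JAMH1983}, which simultaneously gives realizability and the claimed uniqueness. If $t\ge 4$ then $n\ge 7$, and the contradiction is the following: by Corollary~\ref{cor:struct_prop}(\ref{cor_part:transp}) the transposition $s_2(1)\,s_3(1)$ belongs to move $n+\delta_2+d_2+1-1=n+6$, while the segment from $s_2(1)=(-\tau,0)$ to $s_3(1)=(-\tau,-\tau)$ is vertical, the same direction as $\ell_1$ in move $0$. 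In any geometric realization distinct moves of a halfperiod carry distinct directions (the projection direction sweeps monotonically through $180^\circ$), yet $n+6\not\equiv 0 \pmod{2n}$ for $n\ge 7$; hence two distinct moves would share the vertical direction, which is impossible. Therefore $t=3$.

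The main obstacle is organizational rather than computational: fixing a consistent affine frame and orientation for $s_1\cup s_2$ and transcribing Corollary~\ref{cor:struct_prop} into a move-to-slope dictionary valid for \emph{all} $n$, so that $s_3$ is genuinely pinned down by $n$-independent parallelisms and the collinearity constraint is not circular. Once that bookkeeping is settled, the decisive observation is that the obstruction for $t\ge 4$ is a single direction repeated across two distinct moves (which forces $n=6$); recognizing this lets me avoid the more laborious route of explicitly placing $s_4$, and I would only resort to an $s_4$-based argument if the chosen frame made the vertical coincidence awkward to exhibit directly.
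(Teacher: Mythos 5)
Your proposal is correct, and while its first half mirrors the paper's argument, your handling of $t\ge 4$ takes a genuinely different route. Like the paper, you normalize $\Pi|_{s_1\cup s_2}$ to an exponential cross via Corollary~\ref{cor:bip_expcross} and use the move arithmetic of Corollary~\ref{cor:struct_prop} to force parallelisms; your move-to-slope dictionary and the resulting coordinates check out (your frame is the mirror image of the paper's, which is immaterial since the conclusion is only claimed up to affine equivalence). Two differences are worth recording. First, to determine $\lambda$ the paper parametrizes $s_3(1)=(-a,-\lambda)$, $s_3(2)=(-b,-1)$ with quadrant sign constraints and uses a third pair of same-move parallel lines (move $n+4$), whereas you pin each of $s_3(1),s_3(2)$ as the intersection of two forced lines and then close the system with collinearity of $s_3(1)$, $s_3(2)$ and the center --- legitimate, since $\ell_3$ is a crossing line through the center of symmetry; both routes yield the same quartic $\lambda^4-\lambda^3-\lambda-1=(\lambda^2-\lambda-1)(\lambda^2+1)=0$, hence $\lambda=\tau$, and your version avoids the paper's somewhat hand-waved quadrant arguments. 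Second, and more substantially: for $t\ge 4$ the paper introduces $s_4(d_4)$, derives two further parallelisms, and reaches the contradiction $c=0$, while you avoid $s_4$ entirely by noting that the forced sub-configuration already contains the vertical connecting line $s_2(1)s_3(1)$, whose transposition lies in move $n+6$ by Corollary~\ref{cor:struct_prop}(\ref{cor_part:transp}), and that a geometric circular sequence cannot assign the vertical direction (that of $\ell_1$, move $\delta_1$) to a second, distinct move; since $n+6\not\equiv\delta_1\pmod{2n}$ exactly when $n\ge 7$, this kills every $t\ge4$ and is harmless precisely at $n=6$, so the argument is tight. This uses the converse of the parallelism principle the paper relies on (parallel connecting lines must switch in the same move), which is unimpeachable for genuine point sets, being exactly the statement that each move of a circular sequence corresponds to one direction. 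What your approach buys is economy --- no fourth crossing substring and no extra unknown; what the paper's buys is that it never needs that converse, staying wholly within the ``same move implies equal slopes'' template. Your treatment of realizability and uniqueness at $t=3$ (identifying the forced golden configuration with $(\text{Z}5,12,6)$ from the catalogue) is at the same level of detail as the paper's own.
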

\begin{proof}
Without loss of generality, assume that $d_1=d_2=d_3=2$. Suppose that there is a geometric realization $P$ of $\Pi$. By Corollary \ref{cor:bip_expcross}, we can assume that the subset of $P$ corresponding to $\Pi|_{s_1\cup s_2}$ is the exponential cross
\begin{align*}
    & \ell_1 ~:~ s_1(1)=(0, \lambda),s_1(2)=(0,1) ,~\overline{s_1(2)}=(0,-1), ~\overline{s_1(1)}=(0, -\lambda),\\
    & \ell_2 ~:~ s_2(1)=( \lambda,0),s_2(2)=(1,0) ,~\overline{s_2(2)}=(-1,0), ~\overline{s_2(1)}=(-\lambda,0).
\end{align*}
By Corollary \ref{cor:struct_prop}(\ref{cor_part:transp}), the transposition $s_1(1)s_2(2)$ occurs in move $n+\delta_1+d_1+2-1=n+3$ and the transposition $s_1(2)s_3(1)$ occurs in move $n+\delta_1+d_1+d_2+1-2=n+3$. Thus the lines $s_1(1)s_2(2)$ and $s_1(2)s_3(1)$ are parallel. Similarly, the transposition $s_1(1)s_3(1)$ occurs in move $n+\delta_1+d_1+d_2+1-1=n+4$ and the transposition $s_1(2)s_3(2)$ occurs in move $n+\delta_1+d_1+d_2+2-2=n+4$. Thus the lines $s_1(1)s_3(1)$ and $s_1(2)s_3(2)$ are parallel. By Corollary \ref{cor:struct_prop}(\ref{cor_part:transp_mixed}), the transposition $\overline{s_1(2)}s_2(1)$ occurs in move $\delta_1+d_2+2-1=3$ and the transposition $\overline{s_1(1)}s_3(2)$ occurs in move $\delta_1+d_2+d_3+1-2=3$. Thus the lines $\overline{s_1(2)}s_2(1)$ and $\overline{s_1(1)}s_3(2)$ are parallel. Similarly, the transposition $\overline{s_1(2)}s_3(2)$ occurs in move $\delta_1+d_2+d_3+2-2=4$ and the transposition $\overline{s_1(1)}s_3(1)$ occurs in move $\delta_1+d_2+d_3+1-1=4$. Also, by Corollary \ref{cor:struct_prop}(\ref{cor_part:central}), $s_2$ reverses in move $\delta_2=d_1+d_2=4$. 
Thus the lines $\overline{s_1(2)}s_3(2)$, $\overline{s_1(1)}s_3(1)$, and $\ell_2$ are parallel. Thus $s_3(1)=(-a,-\lambda)$ and $s_3(2)=(-b,-1)$ for some $a$ and $b$. Moreover, since $s_3$ is reversed after $s_2$, $s_3(1)$ and $s_3(2)$ are in the third quadrant and so $a,b>0$.
Equaling the slopes of each pair of parallel lines, we have
\begin{equation*}
    \frac{\lambda}{1}=\frac{\lambda+1}{a} ~~, ~~ \frac{\lambda}{1}=\frac{b}{\lambda-1} ~~ \text{ , and } ~~ \frac{2\lambda}{a}=\frac{2}{b}.
\end{equation*}
Hence, $\lambda=a/b=(\lambda+1)(\lambda-1)/\lambda^2$, that is, $\lambda=(1+\sqrt{5})/2,a=\lambda,$ and $b=1$. Then $s_3(1)=(-\lambda,-\lambda)$, $s_3(2)=(-1,-1)$, $\overline{s_3(1)}=(\lambda,\lambda)$ and $\overline{s_3(2)}=(1,1)$. Note that the subconfiguration of $12$ points determined by $s_1,s_2$, and $s_3$ corresponds to the points

\begin{align*}
    & \ell_1 ~:~ \pm\left(0, \frac{1+\sqrt{5}}{2}\right),\pm(0,1),\\
    & \ell_2 ~:~ \pm\left( \frac{1+\sqrt{5}}{2},0\right),\pm(1,0),\\
    & \ell_3 ~:~ \pm\left(\frac{1+\sqrt{5}}{2},\frac{1+\sqrt{5}}{2}\right), \pm(1,1).
\end{align*}
This configuration is actually direction-critical. It is a similar copy of $(\text{Z}5,12,6)$ shown in Figure \ref{fig:Z5_13_6}, which means that any realization of $\text{DC}^{\,cs}_{ncgen}(2,2,2)$ is affinely equivalent to $(\text{Z}5,12,6)$.

Now assume by contradiction that $t\geq 4$ and consider the point $s_4(d_4)$. By Corollary \ref{cor:struct_prop}(\ref{cor_part:central}), $s_3$ reverses in move $\delta_3=d_1+2d_2+d_3=8$. By Corollary \ref{cor:struct_prop}(\ref{cor_part:transp_mixed}), the transposition $\overline{s_2(2)}s_4(d_4)$ occurs in move $\delta_2+d_3+d_4+2-d_4=d_1+2d_2+d_3+2=8$. Thus the lines $\ell_3$ and $\overline{s_2(2)}s_4(d_4)$ are parallel. Since $\ell_3$ has slope $1$ and the line $\overline{s_2(2)}s_4(d_4)$ passes through $\overline{s_2(2)}=(1,0)$, then $s_4(d_4)=(-c,-1-c)$ and $c>0$ because $s_4(d_4)$ must be in the third quadrant due to $s_4$ reversing after $s_1$ and $s_2$. By Corollary \ref{cor:struct_prop}(\ref{cor_part:transp_mixed}), the transposition $\overline{s_2(2)}s_3(1)$  occurs in move $\delta_2+d_3+2-1=d_1+d_2+d_3-1=5$, and the transposition $\overline{s_2(1)}s_4(d_4)$  occurs in move $\delta_2+d_3+d_4+1-d_4=d_1+d_2+d_3-1=5$. Thus the lines $\overline{s_2(2)}s_3(1)$ and $\overline{s_2(1)}s_4(d_4)$ are parallel and so $\lambda/(1+\lambda)=(1+c)/(\lambda+c)$. Since $\lambda=(1+\sqrt{5})/2$, then $c=\lambda^2-\lambda-1=0$, getting a contradiction.
\end{proof}

\begin{theorem}\label{th:(2,1,2)}
The allowable sequence $\Pi=\text{DC}^{\,cs}_{ncgen}(d_1,d_2,\dots,d_t)$ with $t\geq 3$ and $d_i=d_{i+2}=2$ and $d_{i+1}=1$ for some $i\in[t]$ is not geometrically realizable. 
\end{theorem}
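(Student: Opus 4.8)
The plan is to follow the template of Theorems \ref{th:(2,2,...,2)} and \ref{th:(>1,1,1...)}: assume a realization $P$ exists, normalize the most rigid subconfiguration via Corollary \ref{cor:bip_expcross}, read off parallel connecting lines from the move data of Corollary \ref{cor:struct_prop}, and equate slopes until the system becomes inconsistent. Using the cyclic relabeling of the $d_i$ (indices mod $t$), I assume $d_1=2$, $d_2=1$, $d_3=2$, so that $\delta_1=0$, $\delta_2=3$, $\delta_3=6$. Since $d_1=d_3=2$, Corollary \ref{cor:bip_expcross} applied to the pair $s_1,s_3$ lets me assume, after the same affine normalization used in the proof of Theorem \ref{th:(2,2,...,2)}, that $\Pi|_{s_1\cup s_3}$ is the exponential cross $s_1(1)=(0,\lambda)$, $s_1(2)=(0,1)$, $s_3(1)=(\lambda,0)$, $s_3(2)=(1,0)$ (with conjugates opposite) for some $\lambda>1$.

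Next I locate the single pair $s_2$. Its crossing switch is part of move $\delta_2=3$, which by Corollary \ref{cor:struct_prop}(\ref{cor_part:transp_mixed}) also contains $s_1(1)\overline{s_3(1)}$, of slope $1$; as $\ell_2$ runs through the center, this forces $s_2(1)=(p,p)$. Two more coincidences pin $p$ against $\lambda$: by Corollary \ref{cor:struct_prop}(\ref{cor_part:transp_mixed}) the lines $s_1(2)\overline{s_2(1)}$ and $s_1(1)\overline{s_3(2)}$ share move $2$, giving $(1+p)/p=\lambda$ (Eq.~A), and by Corollary \ref{cor:struct_prop}(\ref{cor_part:transp}) the lines $s_1(1)s_2(1)$ and $s_1(2)s_3(1)$ share move $n+2$, giving $\lambda(\lambda-p)=p$ (Eq.~B). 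Eliminating $\lambda$ reduces Eqs.~A and B to the single cubic $2p^3-2p-1=0$, whose only real root satisfies $p>1$; in particular $p\neq 1$ and $2p^3\neq 1$. The main difficulty is precisely that A and B are mutually consistent, so the $(2,1,2)$ block is locally rigid yet not contradictory; a closing relation is still required, and its source differs according to $t$.

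For $t=3$ one has $n=5$, so moves are read modulo $2n=10$ and an extra collision appears: the line $s_1(2)s_2(1)$, which by Corollary \ref{cor:struct_prop}(\ref{cor_part:transp}) lies in move $n+1=6=\delta_3$, is therefore parallel to $\ell_3$; since $s_1(2)=(0,1)$ this forces $p=1$, impossible against $2p^3-2p-1=0$. For $t\ge 4$ I instead bring in the inner point $s_4(d_4)$ of the fourth crossing substring, using only the index-independent moves of Corollary \ref{cor:struct_prop}(\ref{cor_part:transp_mixed}), which survive for every $n\ge 6$. They place $\overline{s_2(1)}s_4(d_4)$ in move $6=\delta_3$ (parallel to $\ell_3$, so $s_4(d_4)=(x_4,-p)$); place $\overline{s_1(1)}s_4(d_4)$ in move $4$ beside $s_1(2)\overline{s_3(1)}$ of slope $1/\lambda$, which with Eq.~B gives $x_4=\lambda(\lambda-p)=p$; and place $\overline{s_1(2)}s_4(d_4)$ in move $5$ beside $s_2(1)\overline{s_3(1)}$ of slope $p/(p+\lambda)$, which with Eq.~A gives $x_4=(1-p^3)/p^2$. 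Equating the two values of $x_4$ yields $2p^3=1$, which combined with $2p^3-2p-1=0$ forces $p=0$, contradicting $s_2(1)\neq(0,0)$. Hence $\Pi$ is not realizable in either case.

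The heart of the argument, and the main obstacle, is exactly that the local cross-plus-pair data do not self-contradict: Eqs.~A and B pin down a legitimate $\lambda>1$, so impossibility is forced only by how the block closes up --- a modular collision of move labels when $t=3$, and the presence of a fourth crossing substring when $t\ge 4$. A minor point to dispatch is the orientation of the normalizing cross: fixing it as above makes the slopes come out as written, and the opposite orientation yields an analogous contradiction after relabeling, so the conclusion is independent of that choice.
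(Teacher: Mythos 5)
Your proposal is correct and follows essentially the same approach as the paper's proof: normalize $\Pi|_{s_1\cup s_3}$ as an exponential cross via Corollary \ref{cor:bip_expcross}, pin $s_2(1)$ to a line of slope $\pm 1$ through the center, extract two slope equations from the move coincidences of Corollary \ref{cor:struct_prop} (your Eqs.~A and B are exactly the paper's $a=1/(\lambda-1)=\lambda^2/(\lambda+1)$ in the variable $p$), and then close each case by one more coincidence --- a modular collision of moves for $t=3$ and constraints on $s_4(d_4)$ for $t\geq 4$. The only real deviation is that your $t\geq 4$ case uses a third parallelism (move $5$, against $s_2(1)\overline{s_3(1)}$) to reach the purely algebraic contradiction $2p^3=1$ versus $2p^3-2p-1=0$, whereas the paper uses only two parallelisms plus the claim that $s_4(d_4)$ lies in a particular quadrant; your variant is slightly more robust, since it sidesteps the orientation/sign bookkeeping that is the least careful part of the paper's write-up.
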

\begin{proof}
Without loss of generality, assume that $d_1=d_3=2$ and $d_1=1$. Suppose that there is a geometric realization $P$ of $\Pi$. By Corollary \ref{cor:bip_expcross}, we can assume that the subset of $P$ corresponding to $\Pi|_{s_1\cup s_3}$ is the exponential cross
\begin{align*}
    & \ell_1 ~:~ s_1(1)=(0, \lambda),s_1(2)=(0,1) ,~\overline{s_1(2)}=(0,-1), ~\overline{s_1(1)}=(0, -\lambda),\\
    & \ell_3~:~ s_3(1)=( \lambda,0),s_3(2)=(1,0) ,~\overline{s_3(2)}=(-1,0), ~\overline{s_3(1)}=(-\lambda,0).
\end{align*}
By Corollary \ref{cor:struct_prop}(\ref{cor_part:transp}), the transposition $s_1(2)s_3(1)$ occurs in move $n+\delta_1+d_1+d_2+1-2=n+2$ and the transposition $s_1(1)s_2(1)$ occurs in move $n+\delta_1+d_1+1-1=n+2$. Thus the lines $s_1(2)s_3(1)$ and $s_1(1)s_2(1)$ are parallel. By Corollary \ref{cor:struct_prop}(\ref{cor_part:transp_mixed}), the transposition $\overline{s_1(2)}s_3(1)$ occurs in move $\delta_1+d_2+d_3+2-1=4$ and the transposition $\overline{s_2(1)}s_3(2)$ occurs in move $\delta_2+d_3+1-2=d_1+d_2+d_3-1=4$. Thus the lines $\overline{s_1(2)}s_3(1)$ and $\overline{s_2(1)}s_3(2)$ are parallel. Similarly, the transposition $\overline{s_1(2)}s_3(2)$ occurs in move $\delta_1+d_2+d_3+2-2=3$. Also, by Corollary \ref{cor:struct_prop}(\ref{cor_part:central}), $s_2$ reverses in move $\delta_2=d_1+d_2=3$. 
Thus the lines $\overline{s_1(2)}s_3(2)$ and $\ell_2$ are parallel, both with slope $-1$. Thus $s_2(1)=(-a,a)$ and since $s_2$ is reversed after $s_1$ and before $s_3$, then $s_3(1)$ is in the second quadrant and so $a>0$.
Equaling the slopes of each pair of parallel lines, we have
\begin{equation*}
    \frac{1}{\lambda}=\frac{\lambda-a}{a} ~~
    ~~ \text{ and } ~~ \frac{-1}{\lambda}=\frac{-a}{a+1}.
\end{equation*}
Hence, $a=1/(\lambda-1)=\lambda^2/(\lambda+1)$. This has a unique real solution for $\lambda$, and this solution satisfies $1<\lambda <2$. 
We separately analyze the cases $t=3$ and $t\geq 4$. When $t=3$, the transposition $s_3(2)s_2(1)$ occurs in move $n+\delta_2+d_2+2-1=n+d_1+2d_2+1=5+2+2+1=10\equiv 0 \pmod{10}$ by Corollary \ref{cor:struct_prop}(\ref{cor_part:transp}). This means that $s_3(2)s_2(1)$ is parallel to $\ell_1$, which is vertical. Since $s_3(2)=(-1,0)$ and $s_2(1)=(-a,a)$, then $a=1$. But $a=1/(\lambda-1)=1$ implies $\lambda=2$ contradicting $1<\lambda <2$. If $t\geq 4$, consider the point $s_4(d_4)$. By Corollary \ref{cor:struct_prop}(\ref{cor_part:central}), $s_3$ reverses in move $\delta_3=d_1+2d_2+d_3=6$. By Corollary \ref{cor:struct_prop}(\ref{cor_part:transp_mixed}), the transposition $\overline{s_2(1)}s_4(d_4)$ occurs in move $\delta_2+d_3+d_4+1-d_4=d_1+d_2+d_3+1=6$. Thus the lines $\ell_3$ and $\overline{s_2(1)}s_4(d_4)$ are parallel. Then $\overline{s_2(1)}s_4(d_4)$ is horizontal with $\overline{s_2(1)}=(a,-a)$. So $s_4(d_4)=(b,-a)$ for some $b>0$ as it should be in the 4th quadrant due to $s_4$ reversing after $s_1$ and $s_3$. By Corollary \ref{cor:struct_prop}(\ref{cor_part:transp_mixed}), the transposition $\overline{s_1(1)}s_4(d_4)$  occurs in move $\delta_1+d_2+d_3+d_4+1-d_4=d_2+d_3+1=4$ and so it is parallel to the line $\overline{s_1(2)}s_3(1)$ (see above). Thus $-1/\lambda=a/(c-\lambda)$. This implies $c=\lambda(\lambda-2)/(\lambda-1)$, which is negative because $1<\lambda<2$, getting a contradiction.
\end{proof}
We are finally ready to prove Theorem \ref{th:noncentral_3}. \begin{proof}[Proof of Theorem \ref{th:noncentral_3}]
Let $d=(d_1,d_2,\dots,d_t)$ and $\Pi=\text{DC}^{\,cs}_{ncgen}(d)$. When all entries of $d$ are 1s, $\Pi$ is geometrically realizable by the regular polygon with $2t$ sides. When $t=2$, $\Pi$ is geometrically realizable by an exponential cross if $d_1,d_2\geq 2$ or by a centrally symmetric bipencil otherwise. Suppose that  $\Pi$ is geometrically realizable for some $t\geq 3$ and some $d_i\geq 2$. Without loss of generality, assume that $d_1\geq 2$ is the largest entry of $d$. If $d_1\geq 3$, then Theorem \ref{th:(>2,>1,...)} implies that $d_2=d_3=\dots =d_t=1$. By Theorem \ref{th:(>1,1,1...)}, $t=3$ and so $d=(d_1,1,1)$. Moreover, any realization of $\Pi$ is affinely equivalent to $\text{TC}^*(1,\lfloor d_1/2\rfloor,\lceil d_1/2\rceil)$.

Now assume that each entry of $d$ is 1 or 2. By Theorem \ref{th:(2,2,...,2)}, either  $d=(2,2,2)$, which is realized by $(\text{Z}5,12,6)$; or there are no more than two consecutive 2s in $d$. By Theorem \ref{th:(>1,1,1...)}, either $d=(2,1,1)$, which is realized by the tricolumnar array $\text{TC}^*(1,1,1)$; or there are no consecutive 1s in $d$. Thus $(2,1,2)$ must be a substring of consecutive elements of $d$. But by Theorem \ref{th:(2,1,2)}, $\Pi$ is not geometrically realizable in this case.
\end{proof}

\section{Future Work}
Our classification results in Theorems \ref{th:noncentral_3} and \ref{th:noncentral_2} imply that any other odd-critical or even-near-critical centrally symmetric configuration must contain a connecting line of three or more points not passing through its center. Even though this does not completely settles Jamison's conjectures on large enough direction-critical configurations \cite{JAMH1983,JAM1985}, we hope that our techniques bring us a step closer to understanding the structure of all direction-(near)-critical configurations that are centrally symmetric. A structural result for even-near-critical allowable sequences similar to Theorem \ref{th:uniqueness}, when the noncentral general position hypothesis is removed, is still needed.

\begin{figure}[htb]
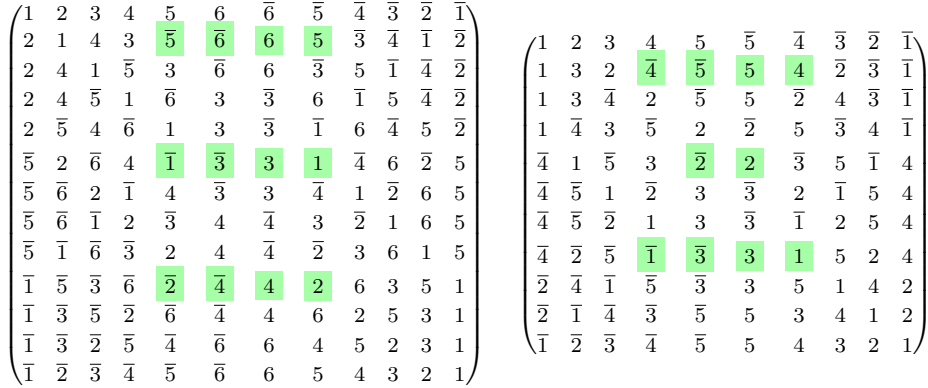

    \centering
\begin{footnotesize}
\begin{equation*}
    \setlength\arraycolsep{4.2pt}
    \renewcommand{\arraystretch}{0.2}
    \begin{pmatrix}
    1 & 2 & 3 & 4 & 5 & 6 & \overline{6} & \overline{5} & \overline{4} & \overline{3} & \overline{2} & \overline{1}\\
    2 & 1 & 4 & 3 & \colorbox{green!35}{$\overline{5}$} & \colorbox{green!35}{$\overline{6}$} & \colorbox{green!35}{$6$} & \colorbox{green!35}{$5$} & \overline{3} & \overline{4} & \overline{1} & \overline{2}\\
    2 & 4 & 1 & \overline{5} & 3 & \overline{6} & \colorbox{white!20}6 & \overline{3} & 5 & \overline{1} & \overline{4} & \overline{2}\\
    2 & 4 & \overline{5} & 1 & \overline{6} & \colorbox{white!20}3 & \overline{3} & 6 & \overline{1} & 5 & \overline{4} & \overline{2}\\
    2 & \overline{5} & 4 & \overline{6} & 1 & \colorbox{white!20}3 & \overline{3} & \overline{1} & 6 & \overline{4} & 5 & \overline{2}\\
    \overline{5} & 2 & \overline{6} & 4 & \colorbox{green!35}{$\overline{1}$} & \colorbox{green!35}{$\overline{3}$} & \colorbox{green!35}{$3$} & \colorbox{green!35}{$1$} & \overline{4} & 6 & \overline{2} & 5 \\
    \overline{5} & \overline{6} & 2 & \overline{1} & \colorbox{white!20}4 & \overline{3} & 3 & \overline{4} & 1 & \overline{2} & 6 & 5\\
    \overline{5} & \overline{6} & \overline{1} & 2 & \overline{3} & \colorbox{white!20}4 & \overline{4} & 3 & \overline{2} & 1 & 6 & 5 \\
    \overline{5} & \overline{1} & \overline{6} & \overline{3} & 2 & \colorbox{white!20}4 & \overline{4} & \overline{2} & 3 & 6 & 1 & 5\\
    \overline{1} & \overline{5} & \overline{3} & \overline{6} & \colorbox{green!35}{$\overline{2}$} & \colorbox{green!35}{$\overline{4}$} & \colorbox{green!35}{$4$} & \colorbox{green!35}{$2$} & 6 & 3 & 5 & 1\\
    \overline{1} & \overline{3} & \overline{5} & \overline{2} & \overline{6} & \overline{4} & \colorbox{white!20}4 & 6 & 2 & 5 & 3 & 1\\
    \overline{1} & \overline{3} & \overline{2} & \overline{5} & \overline{4} & \overline{6} & \colorbox{white!20}6 & 4 & 5 & 2 & 3 & 1\\
    \overline{1} & \overline{2} & \overline{3} & \overline{4} & \overline{5} & \overline{6} & \colorbox{white!20}6 & 5 & 4 & 3 & 2 & 1
    \end{pmatrix}\hspace{.15in}
    \begin{pmatrix}
       1 & 2 & 3 & 4 & 5 & \overline{5} & \overline{4} & \overline{3} & \overline{2} & \overline{1}\\
       1 & 3 & 2 & \colorbox{green!35}{$\overline{4}$} & \colorbox{green!35}{$\overline{5}$} & \colorbox{green!35}{$5$} & \colorbox{green!35}{$4$} & \overline{2} & \overline{3} & \overline{1}\\
       1 & 3 & \overline{4} & \colorbox{white!20}2 & \overline{5} & 5 & \overline{2} & 4 & \overline{3} & \overline{1}\\
       1 & \overline{4} & 3 & \overline{5} & \colorbox{white!20}2 & \overline{2} & 5 & \overline{3} & 4 & \overline{1}\\
       \overline{4} & 1 & \overline{5} & 3 & \colorbox{green!35}{$\overline{2}$} & \colorbox{green!35}{$2$} & \overline{3} & 5 & \overline{1} & 4 \\
       \overline{4} & \overline{5} & 1 & \overline{2} & \colorbox{white!20}3 & \overline{3} & 2 & \overline{1} & 5 & 4 \\
       \overline{4} & \overline{5} & \overline{2} & 1 & \colorbox{white!20}3 & \overline{3} & \overline{1} & 2 & 5 & 4 \\
       \overline{4} & \overline{2} & \overline{5} & \colorbox{green!35}{$\overline{1}$} & \colorbox{green!35}{$\overline{3}$} & \colorbox{green!35}{$3$} & \colorbox{green!35}{$1$} & 5 & 2 & 4\\
       \overline{2} & \overline{4} & \overline{1} & \overline{5} & \overline{3} & \colorbox{white!20}3 & 5 & 1 & 4 & 2\\
       \overline{2} & \overline{1} & \overline{4} & \overline{3} & \overline{5} & \colorbox{white!20}5 & 3 & 4 & 1 & 2 \\
       \overline{1} & \overline{2} & \overline{3} & \overline{4} & \overline{5} & \colorbox{white!20}5 & 4 & 3 & 2 & 1
    \end{pmatrix}
\end{equation*}
\end{footnotesize}
\caption{The sequence $\Pi=\text{DC}^{cs}_{ncgen}(2,2,2)$  and the induced sequence $\Pi'=\text{DC}^{cs}_{ncgen}(2,1,2)$.}
    \label{fig:222_212}
\end{figure}
Furthermore, understanding which induced subsequences of an even-near-critical centrally symmetric allowable sequence are also even-near-critical seems to be key to the full understanding of direction-(near)-critical configurations. For example, Proposition \ref{prop:extreme_expcross}(\ref{prop_part:expcross}) shows that a subsequence  induced by two full crossing substrings remains even-near-critical. In contrast, even when the sequence $\Pi=\text{DC}^{\,cs}_{ncgen}(2,2,2)$ is geometrically realizable by $(\text{Z}5,12,6)$, the induced sequence $\Pi'=\text{DC}^{\,cs}_{ncgen}(2,1,2)$ is not by Theorem \ref{th:(2,1,2)}. Figure \ref{fig:222_212} shows these two allowable sequences and Figure \ref{fig:222_subseq} shows the subsequences of $\Pi$ obtained by removing one point and its conjugate. More precisely, if $S=[6]\cup\overline{[6]}$, then the allowable sequences $\Pi_1:=\Pi|_{S\setminus\{1,\overline{1}\}}$ and $\Pi_3:=\Pi|_{S\setminus\{3,\overline{3}\}}$ are not even-near-critical as their halfperiods have length 12. Comparing them to the sequence $\Pi'$, which actually has a halfperiod of length 10, we can see that they are quite similar. In fact, $\Pi_1$ and $\Pi_3$ are both \emph{semispace equivalent} to $\Pi'$ as defined in \cite{GP84}. While $\Pi_1$ and $\Pi_3$ are geometrically realizable, $\Pi'$ requires a couple of extra pairs of parallel lines making a geometric realization impossible. 

\begin{figure}[h]
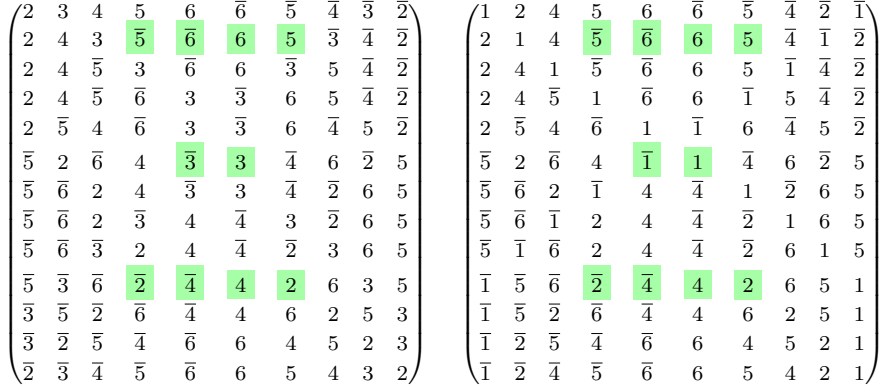

    \centering
\begin{footnotesize}
\begin{equation*}
    \setlength\arraycolsep{4.4pt}
    \renewcommand{\arraystretch}{0.2}
    \begin{pmatrix}
    2 & 3 & 4 & 5 & 6 & \overline{6} & \overline{5} & \overline{4} & \overline{3} & \overline{2} \\
    2 & 4 & 3 & \colorbox{green!35}{$\overline{5}$} & \colorbox{green!35}{$\overline{6}$} & \colorbox{green!35}{$6$} & \colorbox{green!35}{$5$} & \overline{3} & \overline{4} &  \overline{2}\\
    2 & 4  & \overline{5} & \colorbox{white!20}3 & \overline{6} & 6 & \overline{3} & 5 &  \overline{4} & \overline{2}\\
    2 & 4 & \overline{5}  & \overline{6} & \colorbox{white!20}3 & \overline{3} & 6 &  5 & \overline{4} & \overline{2}\\
    2 & \overline{5} & 4 & \overline{6}  & \colorbox{white!20}3 & \overline{3} &  6 & \overline{4} & 5 & \overline{2}\\
    \overline{5} & 2 & \overline{6} & 4 &  \colorbox{green!35}{$\overline{3}$} & \colorbox{green!35}{$3$}  & \overline{4} & 6 & \overline{2} & 5 \\
    \overline{5} & \overline{6} & 2 &  4 & \overline{3} & \colorbox{white!20}3 & \overline{4}  & \overline{2} & 6 & 5\\
    \overline{5} & \overline{6} &  2 & \overline{3} & \colorbox{white!20}4 & \overline{4} & 3 & \overline{2}  & 6 & 5 \\
    \overline{5} &  \overline{6} & \overline{3} & 2 & \colorbox{white!20}4 & \overline{4} & \overline{2} & 3 & 6  & 5\\
     \overline{5} & \overline{3} & \overline{6} & \colorbox{green!35}{$\overline{2}$} & \colorbox{green!35}{$\overline{4}$} & \colorbox{green!35}{$4$} & \colorbox{green!35}{$2$} & 6 & 3 & 5 \\
     \overline{3} & \overline{5} & \overline{2} & \overline{6} & \overline{4} & \colorbox{white!20}4 & 6 & 2 & 5 & 3\\
     \overline{3} & \overline{2} & \overline{5} & \overline{4} & \overline{6} & \colorbox{white!20}6 & 4 & 5 & 2 & 3 \\
     \overline{2} & \overline{3} & \overline{4} & \overline{5} & \overline{6} & \colorbox{white!20}6 & 5 & 4 & 3 & 2 
    \end{pmatrix}
    \hspace{.15in}
    \begin{pmatrix}
    1 & 2  & 4 & 5 & 6 & \overline{6} & \overline{5} & \overline{4} & \overline{2} & \overline{1}\\
    2 & 1 & 4  & \colorbox{green!35}{$\overline{5}$} & \colorbox{green!35}{$\overline{6}$} & \colorbox{green!35}{$6$} & \colorbox{green!35}{$5$}  & \overline{4} & \overline{1} & \overline{2}\\
    2 & 4 & 1 & \overline{5}  & \overline{6} & \colorbox{white!20}6  & 5 & \overline{1} & \overline{4} & \overline{2}\\
    2 & 4 & \overline{5} & 1 & \overline{6}   & \colorbox{white!20}6 & \overline{1} & 5 & \overline{4} & \overline{2}\\
    2 & \overline{5} & 4 & \overline{6} & \colorbox{white!20}1  & \overline{1} & 6 & \overline{4} & 5 & \overline{2}\\
    \overline{5} & 2 & \overline{6} & 4 & \colorbox{green!35}{$\overline{1}$}  & \colorbox{green!35}{$1$} & \overline{4} & 6 & \overline{2} & 5 \\
    \overline{5} & \overline{6} & 2 & \overline{1} & \colorbox{white!20}4   & \overline{4} & 1 & \overline{2} & 6 & 5\\
    \overline{5} & \overline{6} & \overline{1} & 2  & \colorbox{white!20}4 & \overline{4} & \overline{2} & 1 & 6 & 5 \\
    \overline{5} & \overline{1} & \overline{6}  & 2 & \colorbox{white!20}4 & \overline{4} & \overline{2}  & 6 & 1 & 5\\
    \overline{1} & \overline{5}  & \overline{6} & \colorbox{green!35}{$\overline{2}$} & \colorbox{green!35}{$\overline{4}$} & \colorbox{green!35}{$4$} & \colorbox{green!35}{$2$} & 6  & 5 & 1\\
    \overline{1}  & \overline{5} & \overline{2} & \overline{6} & \overline{4} & \colorbox{white!20}4 & 6 & 2 & 5  & 1\\
    \overline{1}  & \overline{2} & \overline{5} & \overline{4} & \overline{6} & \colorbox{white!20}6 & 4 & 5 & 2  & 1\\
    \overline{1} & \overline{2}  & \overline{4} & \overline{5} & \overline{6} & \colorbox{white!20}6 & 5 & 4  & 2 & 1
    \end{pmatrix}
\end{equation*}
\end{footnotesize}
    \caption{The subsequences $\Pi_1$ and $\Pi_3$ of $\text{DC}^{\,cs}_{ncgen}(2,2,2)$ induced by removing the points $\{1,\overline{1}\}$ or $\{3,\overline{3}\}$, respectively.}
    \label{fig:222_subseq}
\end{figure}

\vskip2cm

\newpage
\section*{Appendix}
\begin{enumerate}
    \item The sequence $(1,1,1,1,1,1,1,1)$ in Figure \ref{fig:geomrealizations}(a) as an example for the type $(1,1,\dots,1)$ which corresponds to a regular polygon.

\begin{footnotesize}
\begin{equation*}
    \begin{pmatrix}
    1 & 2 & 3 & 4 & 5 & 6 & 7 & 8 & \overline{8} & \overline{7} & \overline{6} & \overline{5} & \overline{4} & \overline{3} & \overline{2} & \overline{1}\\
    1 & 3 & 2 & 5 & 4 & 7 & 6 & \colorbox{green!35}{$\overline{8}$} & \colorbox{green!35}{$8$} & \overline{6} & \overline{7} & \overline{4} & \overline{5} & \overline{2} & \overline{3} & \overline{1}\\
    3 & 1 & 5 & 2 & 7 & 4 & \colorbox{blue!35}{$\overline{8}$} & \colorbox{blue!35}6 & \colorbox{blue!20}{$\overline{6}$} & \colorbox{blue!20}8 & \overline{4} & \overline{7} & \overline{2} & \overline{5} & \overline{1} & \overline{3}\\
    3 & 5 & 1 & 7 & 2 & \colorbox{blue!35}{$\overline{8}$} & \colorbox{blue!35}4 & \colorbox{green!35}{$\overline{6}$} & \colorbox{green!35}{$6$} & \colorbox{blue!20}{$\overline{4}$} & \colorbox{blue!20}8 & \overline{2} & \overline{7} & \overline{1} & \overline{5} & \overline{3}\\
    5 & 3 & 7 & 1 & \colorbox{blue!35}{$\overline{8}$} & \colorbox{blue!35}2 & \overline{6} & 4 & \overline{4} & 6 & \colorbox{blue!20}{$\overline{2}$} & \colorbox{blue!20}8 & \overline{1} & \overline{7} & \overline{3} & \overline{5}\\
    5 & 7 & 3 & \colorbox{blue!35}{$\overline{8}$} & \colorbox{blue!35}1 & \overline{6} & 2 & \colorbox{green!35}{$\overline{4}$} & \colorbox{green!35}{$4$} & \overline{2} & 6 & \colorbox{blue!20}{$\overline{1}$} & \colorbox{blue!20}8 & \overline{3} & \overline{7} & \overline{5}\\
    7 & 5 & \colorbox{blue!35}{$\overline{8}$} & \colorbox{blue!35}3 & \overline{6} & 1 & \overline{4} & 2 & \overline{2} & 4 & \overline{1} & 6 & \colorbox{blue!20}{$\overline{3}$} & \colorbox{blue!20}8 & \overline{5} & \overline{7}\\
    7 & \colorbox{blue!35}{$\overline{8}$} & \colorbox{blue!35}5 & \overline{6} & 3 & \overline{4} & 1 & \colorbox{green!35}{$\overline{2}$} & \colorbox{green!35}{$2$} & \overline{1} & 4 & \overline{3} & 6 & \colorbox{blue!20}{$\overline{5}$} & \colorbox{blue!20}8 & \overline{7} \\
   \colorbox{blue!35}{$\overline{8}$} & \colorbox{blue!35}7 & \overline{6} & 5 & \overline{4} & 3 & \overline{2} & 1 & \overline{1} & 2 & \overline{3} & 4 & \overline{5} & 6 & \colorbox{blue!20}{$\overline{7}$} & \colorbox{blue!20}8 \\
    \colorbox{blue!35}{$\overline{8}$} & \overline{6} & 7 & \overline{4} & 5 & \overline{2} & 3 & \colorbox{green!35}{$\overline{1}$} & \colorbox{green!35}{$1$} & \overline{3} & 2 & \overline{5} & 4 & \overline{7} & 6 & \colorbox{blue!20}8\\
    \colorbox{blue!35}{$\overline{6}$} & \colorbox{blue!35}{$\overline{8}$} & \overline{4} & 7 & \overline{2} & 5 & \overline{1} & 3 & \overline{3} & 1 & \overline{5} & 2 & \overline{7} & 4 & \colorbox{blue!20}8 & \colorbox{blue!20}6 \\
    \overline{6} & \colorbox{blue!35}{$\overline{4}$} & \colorbox{blue!35}{$\overline{8}$} & \overline{2} & 7 & \overline{1} & 5 & \colorbox{green!35}{$\overline{3}$} & \colorbox{green!35}{$3$} & \overline{5} & 1 & \overline{7} & 2 & \colorbox{blue!20}8 & \colorbox{blue!20}4 & 6\\
    \overline{4} & \overline{6} & \colorbox{blue!35}{$\overline{2}$} & \colorbox{blue!35}{$\overline{8}$} & \overline{1} & 7 & \overline{3} & 5 & \overline{5} & 3 & \overline{7} & 1 & \colorbox{blue!20}8 & \colorbox{blue!20}2 & 6 & 4\\
    \overline{4} & \overline{2} & \overline{6} & \colorbox{blue!35}{$\overline{1}$} & \colorbox{blue!35}{$\overline{8}$} & \overline{3} & 7 & \colorbox{green!35}{$\overline{5}$} & \colorbox{green!35}{$5$} & \overline{7} & 3 & \colorbox{blue!20}8 & \colorbox{blue!20}1 & 6 & 2 & 4\\
    \overline{2} & \overline{4} & \overline{1} & \overline{6} & \colorbox{blue!35}{$\overline{3}$} & \colorbox{blue!35}{$\overline{8}$} & \overline{5} & 7 & \overline{7} & 5 & \colorbox{blue!20}8 & \colorbox{blue!20}3 & 6 & 1 & 4 & 2 \\
    \overline{2} & \overline{1} & \overline{4} & \overline{3} & \overline{6} & \colorbox{blue!35}{$\overline{5}$} & \colorbox{blue!35}{$\overline{8}$} & \colorbox{green!35}{$\overline{7}$} & \colorbox{green!35}{$7$} & \colorbox{blue!20}8 & \colorbox{blue!20}5 & 6 & 3 & 4 & 1 & 2\\
    \overline{1} & \overline{2} & \overline{3} & \overline{4} & \overline{5} & \overline{6} & \colorbox{blue!35}{$\overline{7}$} & \colorbox{blue!35}{$\overline{8}$} & \colorbox{blue!20}8 & \colorbox{blue!20}7 & 6 & 5 & 4 & 3 & 2 & 1
    \end{pmatrix}
\end{equation*}
\end{footnotesize}

\vskip0.5cm
\item The sequence $d=(4,4)$ in Figure \ref{fig:geomrealizations}(b) as an example for the type $(d_1,d_2)$ with $d_1,d_2\geq 2$, which corresponds to an exponential cross.

\begin{footnotesize}
\begin{equation*}
    \begin{pmatrix}
   1 & 2 & 3 & 4 & 5 & 6 & 7 & 8 & \overline{8} & \overline{7} & \overline{6} & \overline{5} & \overline{4} & \overline{3} & \overline{2} & \overline{1}\\
    1 & 2 & 3 & 4 & \colorbox{green!35}{$\overline{5}$} & \colorbox{green!35}{$\overline{6}$} & \colorbox{green!35}{$\overline{7}$} & \colorbox{green!35}{$\overline{8}$} & \colorbox{green!35}8 & \colorbox{green!35}7 & \colorbox{green!35}6 & \colorbox{green!35}5 & \overline{4} & \overline{3} & \overline{2} & \overline{1}\\
    1 & 2 & 3 & \overline{5} & 4 & \overline{6} & \overline{7} & \colorbox{blue!35}{$\overline{8}$} & \colorbox{blue!20}8 & 7 & 6 & \overline{4} & 5 & \overline{3} & \overline{2} & \overline{1} \\
    1 & 2 & \overline{5} & 3 & \overline{6} & 4 & \overline{7} & \colorbox{blue!35}{$\overline{8}$} & \colorbox{blue!20}8 & 7 & \overline{4} & 6 & \overline{3} & 5 & \overline{2} & \overline{1}\\
    1 & \overline{5} & 2 & \overline{6} & 3 & \overline{7} & 4 & \colorbox{blue!35}{$\overline{8}$} & \colorbox{blue!20}8 & \overline{4} & 7 & \overline{3} & 6 & \overline{2} & 5 & \overline{1}\\
    \overline{5} & 1 & \overline{6} & 2 & \overline{7} & 3 & \colorbox{blue!35}{$\overline{8}$} & \colorbox{blue!35}4 & \colorbox{blue!20}{$\overline{4}$} & \colorbox{blue!20}8 & \overline{3} & 7 & \overline{2} & 6 & \overline{1} & 5 \\
    \overline{5} & \overline{6} & 1 & \overline{7} & 2 & \colorbox{blue!35}{$\overline{8}$} & \colorbox{blue!35}3 & 4 & \overline{4} & \colorbox{blue!20}{$\overline{3}$} & \colorbox{blue!20}8 & \overline{2} & 7 & \overline{1} & 6 & 5 \\
    \overline{5} & \overline{6} & \overline{7} & 1 & \colorbox{blue!35}{$\overline{8}$} & \colorbox{blue!35}2 & 3 & 4 & \overline{4} & \overline{3} & \colorbox{blue!20}{$\overline{2}$} & \colorbox{blue!20}8 & \overline{1} & 7 & 6 & 5 \\
    \overline{5} & \overline{6} & \overline{7} & \colorbox{blue!35}{$\overline{8}$} & \colorbox{blue!35}1 & 2 & 3 & 4 & \overline{4} & \overline{3} & \overline{2} & \colorbox{blue!20}{$\overline{1}$} & \colorbox{blue!20}8 & 7 & 6 & 5\\
    \colorbox{white!20}{$\overline{5}$} & \colorbox{white!20}{$\overline{6}$} & \colorbox{white!20}{$\overline{7}$} & \colorbox{blue!35}{$\overline{8}$} & \colorbox{green!35}{$\overline{1}$} & \colorbox{green!35}{$\overline{2}$} & \colorbox{green!35}{$\overline{3}$} & \colorbox{green!35}{$\overline{4}$} & \colorbox{green!35}4 & \colorbox{green!35}3 & \colorbox{green!35}2 & \colorbox{green!35}1 & \colorbox{blue!20}8 & \colorbox{white!20}7 & \colorbox{white!20}6 & \colorbox{white!20}5 \\
     \overline{5} & \overline{6} & \overline{7} & \colorbox{blue!35}{$\overline{1}$} & \colorbox{blue!35}{$\overline{8}$} & \overline{2} & \overline{3} & \overline{4} & 4 & 3 & 2 & \colorbox{blue!20}8 & \colorbox{blue!20}1 & 7 & 6 & 5\\
    \overline{5} & \overline{6} & \overline{1} & \overline{7} & \colorbox{blue!35}{$\overline{2}$} & \colorbox{blue!35}{$\overline{8}$} & \overline{3} & \overline{4} & 4 & 3 & \colorbox{blue!20}8 & \colorbox{blue!20}2 & 7 & 1 & 6 & 5\\
    \overline{5} & \overline{1} & \overline{6} & \overline{2} & \overline{7} & \colorbox{blue!35}{$\overline{3}$} & \colorbox{blue!35}{$\overline{8}$} & \overline{4} & 4 & \colorbox{blue!20}8 & \colorbox{blue!20}3 & 7 & 2 & 6 & 1 & 5\\
    \overline{1} & \overline{5} & \overline{2} & \overline{6} & \overline{3} & \overline{7} & \colorbox{blue!35}{$\overline{4}$} & \colorbox{blue!35}{$\overline{8}$} & \colorbox{blue!20}8 & \colorbox{blue!20}4 & 7 & 3 & 6 & 2 & 5 & 1\\
    \overline{1} & \overline{2} & \overline{5} & \overline{3} & \overline{6} & \overline{4} & \overline{7} & \colorbox{blue!35}{$\overline{8}$} & \colorbox{blue!20}8 & 7 & 4 & 6 & 3 & 5 & 2 & 1 \\
    \overline{1} & \overline{2} & \overline{3} & \overline{5} & \overline{4} & \overline{6} & \overline{7} & \colorbox{blue!35}{$\overline{8}$} & \colorbox{blue!20}8 & 7 & 6 & 4 & 5 & 3 & 2 & 1\\
    \overline{1} & \overline{2} & \overline{3} & \overline{4} & \overline{5} & \overline{6} & \overline{7} & \colorbox{blue!35}{$\overline{8}$} & \colorbox{blue!20}8 & 7 & 6 & 5 & 4 & 3 & 2 & 1
    \end{pmatrix}
\end{equation*}
\end{footnotesize}
\newpage
    \item The sequence $d=(5,3)$ in Figure \ref{fig:geomrealizations}(c) as an example for the type $(d_1,d_2)$ with $d_1,d_2\geq 2$, which corresponds to an exponential cross.

\begin{footnotesize}
    \begin{equation*}
        \begin{pmatrix}
            1 & 2 & 3 & 4 & 5 & 6 & 7 & 8 & \overline{8} & \overline{7} & \overline{6} & \overline{5} & \overline{4} & \overline{3} & \overline{2} & \overline{1}\\
            \colorbox{white!20}1 & \colorbox{white!20}2 & \colorbox{white!20}3 & \colorbox{green!35}{$\overline{4}$} & \colorbox{green!35}{$\overline{5}$} & \colorbox{green!35}{$\overline{6}$} & \colorbox{green!35}{$\overline{7}$} & \colorbox{green!35}{$\overline{8}$} & \colorbox{green!20}8 & \colorbox{green!35}7 & \colorbox{green!35}6 & \colorbox{green!35}5 & \colorbox{green!35}4 & \colorbox{white!20}{$\overline{3}$} & \colorbox{white!20}{$\overline{2}$} & \colorbox{white!20}{$\overline{1}$}\\
            1 & 2 & \overline{4} & 3 & \overline{5} & \overline{6} & \overline{7} & \colorbox{blue!35}{$\overline{8}$} & \colorbox{blue!20}8 & 7 & 6 & 5 & \overline{3} & 4 & \overline{2} & \overline{1}\\
            1 & \overline{4} & 2 & \overline{5} & 3 & \overline{6} & \overline{7} & \colorbox{blue!35}{$\overline{8}$} & \colorbox{blue!20}8 & 7 & 6 & \overline{3} & 5 & \overline{2} & 4 & \overline{1}\\
            \overline{4} & 1 & \overline{5} & 2 & \overline{6} & 3 & \overline{7} & \colorbox{blue!35}{$\overline{8}$} & \colorbox{blue!20}8 & 7 & \overline{3} & 6 & \overline{2} & 5 & \overline{1} & 4\\
            \overline{4} & \overline{5} & 1 & \overline{6} & 2 & \overline{7} & 3 & \colorbox{blue!35}{$\overline{8}$} & \colorbox{blue!20}8 & \overline{3} & 7 & \overline{2} & 6 & \overline{1} & 5 & 4\\
            \overline{4} & \overline{5} & \overline{6} & 1 & \overline{7} & 2 & \colorbox{blue!35}{$\overline{8}$} & \colorbox{blue!35}3 & \colorbox{blue!20}{$\overline{3}$} & \colorbox{blue!20}8 & \overline{2} & 7 & \overline{1} & 6 & 5 & 4\\
            \overline{4} & \overline{5} & \overline{6} & \overline{7} & 1 & \colorbox{blue!35}{$\overline{8}$} & \colorbox{blue!35}2 & 3 & \overline{3} & \colorbox{blue!20}{$\overline{2}$} & \colorbox{blue!20}8 & \overline{1} & 7 & 6 & 5 & 4\\
            \overline{4} & \overline{5} & \overline{6} & \overline{7} & \colorbox{blue!35}{$\overline{8}$} & \colorbox{blue!35}1 & 2 & 3 & \overline{3} & \overline{2} & \colorbox{blue!20}{$\overline{1}$} & \colorbox{blue!20}8 & 7 & 6 & 5 & 4\\
            \overline{4} & \overline{5} & \overline{6} & \overline{7} & \colorbox{blue!35}{$\overline{8}$} & \colorbox{green!35}{$\overline{1}$} & \colorbox{green!35}{$\overline{2}$} & \colorbox{green!35}{$\overline{3}$} & \colorbox{green!35}3 & \colorbox{green!35}2 & \colorbox{green!35}1 & \colorbox{blue!20}8 & 7 & 6 & 5 & 4\\
            \overline{4} & \overline{5} & \overline{6} & \overline{7} & \overline{1} & \colorbox{blue!35}{$\overline{8}$} & \overline{2} & \overline{3} & 3 & 2 & \colorbox{blue!20}8 & 1 & 7 & 6 & 5 & 4\\
            \overline{4} & \overline{5} & \overline{6} & \overline{1} & \overline{7} & \overline{2} & \colorbox{blue!35}{$\overline{8}$} & \overline{3} & 3 & \colorbox{blue!20}8 & 2 & 7 & 1 & 6 & 5 & 4\\
            \overline{4} & \overline{5} & \overline{1} & \overline{6} & \overline{2} & \overline{7} & \overline{3} & \colorbox{blue!35}{$\overline{8}$} & \colorbox{blue!20}8 & 3 & 7 & 2 & 6 & 1 & 5 & 4\\
            \overline{4} & \overline{1} & \overline{5} & \overline{2} & \overline{6} & \overline{3} & \overline{7} & \colorbox{blue!35}{$\overline{8}$} & \colorbox{blue!20}8 & 7 & 3 & 6 & 2 & 5 & 1 & 4\\
            \overline{1} & \overline{4} & \overline{2} & \overline{5} & \overline{3} & \overline{6} & \overline{7} & \colorbox{blue!35}{$\overline{8}$} & \colorbox{blue!20}8 & 7 & 6 & 3 & 5 & 2 & 4 & 1\\
            \overline{1} & \overline{2} & \overline{4} & \overline{3} & \overline{5} & \overline{6} & \overline{7} & \colorbox{blue!35}{$\overline{8}$} & \colorbox{blue!20}8 & 7 & 6 & 5 & 3 & 4 & 2 & 1 \\
            \overline{1} & \overline{2} & \overline{3} & \overline{4} & \overline{5} & \overline{6} & \overline{7} & \colorbox{blue!35}{$\overline{8}$} & \colorbox{blue!20}8 & 7 & 6 & 5 & 4 & 3 & 2 & 1
        \end{pmatrix}
    \end{equation*}
    \end{footnotesize}
    \vspace{.2in}
    \item The sequence $d=(6,2)$ in Figure \ref{fig:geomrealizations}(d) as an example for the type $(d_1,d_2)$ with $d_1,d_2\geq 2$, which corresponds to an exponential cross.
\begin{footnotesize}
    \begin{equation*}
        \begin{pmatrix}
            1 & 2 & 3 & 4 & 5 & 6 & 7 & 8 & \overline{8} & \overline{7} & \overline{6} & \overline{5} & \overline{4} & \overline{3} & \overline{2} & \overline{1}\\
            \colorbox{white!20}1 & \colorbox{white!20}2 & \colorbox{green!35}{$\overline{3}$} & \colorbox{green!35}{$\overline{4}$} & \colorbox{green!35}{$\overline{5}$} & \colorbox{green!35}{$\overline{6}$} & \colorbox{green!35}{$\overline{7}$} & \colorbox{green!35}{$\overline{8}$} & \colorbox{green!35}{$8$} & \colorbox{green!35}{$7$} & \colorbox{green!35}{$6$} & \colorbox{green!35}{$5$} & \colorbox{green!35}{$4$} & \colorbox{green!35}{$3$} & \colorbox{white!20}{$\overline{2}$} & \colorbox{white!20}{$\overline{1}$}\\
            1 & \overline{3} & 2 & \overline{4} & \overline{5} & \overline{6} & \overline{7} & \colorbox{blue!35}{$\overline{8}$} & \colorbox{blue!20}{$8$} & 7 & 6 & 5 & 4 & \overline{2} & 3 & \overline{1}\\
            \overline{3} & 1 & \overline{4} & 2 & \overline{5} & \overline{6} & \overline{7} & \colorbox{blue!35}{$\overline{8}$} & \colorbox{blue!20}{$8$} & 7 & 6 & 5 & \overline{2} & 4 & \overline{1} & 3\\
            \overline{3} & \overline{4} & 1 & \overline{5} & 2 & \overline{6} & \overline{7} & \colorbox{blue!35}{$\overline{8}$} & \colorbox{blue!20}{$8$} & 7 & 6 & \overline{2} & 5 & \overline{1} & 4 & 3 \\
            \overline{3} & \overline{4} & \overline{5} & 1 & \overline{6} & 2 & \overline{7} & \colorbox{blue!35}{$\overline{8}$} & \colorbox{blue!20}{$8$} & 7 & \overline{2} & 6 & \overline{1} & 5 & 4 & 3\\
            \overline{3} & \overline{4} & \overline{5} & \overline{6} & 1 & \overline{7} & 2 & \colorbox{blue!35}{$\overline{8}$} & \colorbox{blue!20}{$8$} & \overline{2} & 7 & \overline{1} & 6 & 5 & 4 & 3\\
            \overline{3} & \overline{4} & \overline{5} & \overline{6} & \overline{7} & 1 & \colorbox{blue!35}{$\overline{8}$} & \colorbox{blue!35}{$2$} & \colorbox{blue!20}{$\overline{2}$} & \colorbox{blue!20}{$8$} & \overline{1} & 7 & 6 & 5 & 4 & 3\\
            \overline{3} & \overline{4} & \overline{5} & \overline{6} & \overline{7} & \colorbox{blue!35}{$\overline{8}$} & \colorbox{blue!35}{$1$} & 2 & \overline{2} & \colorbox{blue!20}{$\overline{1}$} & \colorbox{blue!20}{$8$} & 7 & 6 & 5 & 4 & 3\\
            \overline{3} & \overline{4} & \overline{5} & \overline{6} & \overline{7} & \colorbox{blue!35}{$\overline{8}$} & \colorbox{green!35}{$\overline{1}$} & \colorbox{green!35}{$\overline{2}$} & \colorbox{green!35}{$2$} & \colorbox{green!35}{$1$} & \colorbox{blue!20}{$8$} & 7 & 6 & 5 & 4 & 3\\
            \overline{3} & \overline{4} & \overline{5} & \overline{6} & \overline{7} & \colorbox{blue!35}{$\overline{1}$} & \colorbox{blue!35}{$\overline{8}$} & \overline{2} & 2 & \colorbox{blue!20}{$8$} & \colorbox{blue!20}{$1$} & 7 & 6 & 5 & 4 & 3\\
            \overline{3} & \overline{4} & \overline{5} & \overline{6} & \overline{1} & \overline{7} & \colorbox{blue!35}{$\overline{2}$} & \colorbox{blue!35}{$\overline{8}$} & \colorbox{blue!20}8 & \colorbox{blue!20}{$2$} & 7 & 1 & 6 & 5 & 4 & 3 \\
            \overline{3} & \overline{4} & \overline{5} & \overline{1} & \overline{6} & \overline{2} & \overline{7} & \colorbox{blue!35}{$\overline{8}$} & \colorbox{blue!20}8 & 7 & 2 & 6 & 1 & 5 & 4 & 3 \\
            \overline{3} & \overline{4} & \overline{1} & \overline{5} & \overline{2} & \overline{6} & \overline{7} & \colorbox{blue!35}{$\overline{8}$} & \colorbox{blue!20}8 & 7 & 6 & 2 & 5 & 1 & 4 & 3 \\
            \overline{3} & \overline{1} & \overline{4} & \overline{2} & \overline{5} & \overline{6} & \overline{7} & \colorbox{blue!35}{$\overline{8}$} & \colorbox{blue!20}8 & 7 & 6 & 5 & 2 & 4 & 1 & 3\\
            \overline{1} & \overline{3} & \overline{2} & \overline{4} & \overline{5} & \overline{6} & \overline{7} & \colorbox{blue!35}{$\overline{8}$} & \colorbox{blue!20}8 & 7 & 6 & 5 & 4& 2 & 3 & 1\\
            \overline{1} & \overline{2} & \overline{3} & \overline{4} & \overline{5} & \overline{6} & \overline{7} & \colorbox{blue!35}{$\overline{8}$} & \colorbox{blue!20}8 & 7 & 6 & 5 & 4 & 3 & 2 & 1
        \end{pmatrix}
    \end{equation*}
\end{footnotesize}
\newpage
\item The sequence $(7,1)$ in Figure \ref{fig:geomrealizations}(e) as an example for the type $(d_1,1)$ with $d_1\geq 2$, which corresponds to a bipencil:

\begin{footnotesize}
\begin{equation*}
    \begin{pmatrix}
    1 & 2 & 3 & 4 & 5 & 6 & 7 & 8 & \overline{8} & \overline{7} & \overline{6} & \overline{5} & \overline{4} & \overline{3} & \overline{2} & \overline{1}\\
    \colorbox{white!20}1 & \colorbox{green!35}{$\overline{2}$} & \colorbox{green!35}{$\overline{3}$} & \colorbox{green!35}{$\overline{4}$} & \colorbox{green!35}{$\overline{5}$} & \colorbox{green!35}{$\overline{6}$} & \colorbox{green!35}{$\overline{7}$} & \colorbox{green!35}{$\overline{8}$} & \colorbox{green!35}8 & \colorbox{green!35}7 & \colorbox{green!35}6 & \colorbox{green!35}5 & \colorbox{green!35}4 & \colorbox{green!35}3 & \colorbox{green!35}2 & \colorbox{white!20}{$\overline{1}$}\\
    \overline{2} & 1 & \overline{3} & \overline{4} & \overline{5} & \overline{6} & \overline{7} & \colorbox{blue!35}{$\overline{8}$} & \colorbox{blue!20}8 & 7 & 6 & 5 & 4 & 3 & \overline{1} & 2\\
    \overline{2} & \overline{3} & 1 & \overline{4} & \overline{5} & \overline{6} & \overline{7} & \colorbox{blue!35}{$\overline{8}$} & \colorbox{blue!20}8 & 7 & 6 & 5 & 4 & \overline{1} & 3 & 2\\
    \overline{2} & \overline{3} & \overline{4} & 1 & \overline{5} & \overline{6} & \overline{7} & \colorbox{blue!35}{$\overline{8}$} & \colorbox{blue!20}8 & 7 & 6 & 5 & \overline{1} & 4 & 3 & 2\\
    \overline{2} & \overline{3} & \overline{4} & \overline{5} & 1 & \overline{6} & \overline{7} & \colorbox{blue!35}{$\overline{8}$} & \colorbox{blue!20}8 & 7 & 6 & \overline{1} & 5 & 4 & 3 & 2 \\
    \overline{2} & \overline{3} & \overline{4} & \overline{5} & \overline{6} & 1 & \overline{7} & \colorbox{blue!35}{$\overline{8}$} & \colorbox{blue!20}8 & 7 & \overline{1} & 6 & 5 & 4 & 3 & 2\\
    \overline{2} & \overline{3} & \overline{4} & \overline{5} & \overline{6} & \overline{7} & 1 & \colorbox{blue!35}{$\overline{8}$} & \colorbox{blue!20}8 & \overline{1} & 7 & 6 & 5 & 4 & 3 & 2\\
    \overline{2} & \overline{3} & \overline{4} & \overline{5} & \overline{6} & \overline{7} & \colorbox{blue!35}{$\overline{8}$} & \colorbox{blue!35}1 & \colorbox{blue!20}{$\overline{1}$} & \colorbox{blue!20}8 & 7 & 6 & 5 & 4 & 3 & 2\\
    \overline{2} & \overline{3} & \overline{4} & \overline{5} & \overline{6} & \overline{7} & \colorbox{blue!35}{$\overline{8}$} & \colorbox{green!35}{$\overline{1}$} & \colorbox{green!35}1 & \colorbox{blue!20}8 & 7 & 6 & 5 & 4 & 3 & 2\\
    \overline{2} & \overline{3} & \overline{4} & \overline{5} & \overline{6} & \overline{7} & \colorbox{blue!35}{$\overline{1}$} & \colorbox{blue!35}{$\overline{8}$} & \colorbox{blue!20}8 & \colorbox{blue!20}{$1$} & 7 & 6 & 5 & 4 & 3 & 2\\
    \overline{2} & \overline{3} & \overline{4} & \overline{5} & \overline{6} & \overline{1} & \overline{7} & \colorbox{blue!35}{$\overline{8}$} & \colorbox{blue!20}8 & 7 & 1 & 6 & 5 & 4 & 3 & 2\\
    \overline{2} & \overline{3} & \overline{4} & \overline{5} & \overline{1} & \overline{6} & \overline{7} & \colorbox{blue!35}{$\overline{8}$} & \colorbox{blue!20}8 & 7 & 6 & 1 & 5 & 4 & 3 & 2\\
    \overline{2} & \overline{3} & \overline{4} & \overline{1} & \overline{5} & \overline{6} & \overline{7} & \colorbox{blue!35}{$\overline{8}$} & \colorbox{blue!20}8 & 7 & 6 & 5 & 1 & 4 & 3 & 2\\
    \overline{2} & \overline{3} & \overline{1} & \overline{4} & \overline{5} & \overline{6} & \overline{7} & \colorbox{blue!35}{$\overline{8}$} & \colorbox{blue!20}8 & 7 & 6 & 5 & 4 & 1 & 3 & 2\\
    \overline{2} & \overline{1} & \overline{3} & \overline{4} & \overline{5} & \overline{6} & \overline{7} & \colorbox{blue!35}{$\overline{8}$} & \colorbox{blue!20}8 & 7 & 6 & 5 & 4 & 3 & 1 & 2\\
    \overline{1} & \overline{2} & \overline{3} & \overline{4} & \overline{5} & \overline{6} & \overline{7} & \colorbox{blue!35}{$\overline{8}$} & \colorbox{blue!20}8 & 7 & 6 & 5 & 4 & 3 & 2 & 1
    \end{pmatrix}
\end{equation*}
\end{footnotesize}
\vskip0.5cm
\item The sequence $(6,1,1)$ in Figure \ref{fig:geomrealizations}(f) as an example for the type $(d_1,1,1)$ with $d_1\geq 2$, which corresponds to a tricolumnar arrangement:
\begin{footnotesize}
\begin{equation*}
    \begin{pmatrix}
   1 & 2 & 3 & 4 & 5 & 6 & 7 & 8 & \overline{8} & \overline{7} & \overline{6} & \overline{5} & \overline{4} & \overline{3} & \overline{2} & \overline{1}\\
   \colorbox{white!20}2 & \colorbox{white!20}1 & \colorbox{green!35}{$\overline{3}$} & \colorbox{green!35}{$\overline{4}$} & \colorbox{green!35}{$\overline{5}$} & \colorbox{green!35}{$\overline{6}$} & \colorbox{green!35}{$\overline{7}$} & \colorbox{green!35}{$\overline{8}$} & \colorbox{green!35}8 & \colorbox{green!35}7 & \colorbox{green!35}6 & \colorbox{green!35}5 & \colorbox{green!35}4 & \colorbox{green!35}3 & \colorbox{white!20}{$\overline{1}$} & \colorbox{white!20}{$\overline{2}$}\\
   2 & \overline{3} & 1 & \overline{4} & \overline{5} & \overline{6} & \overline{7} & \colorbox{blue!35}{$\overline{8}$} & \colorbox{blue!20}8 & 7 & 6 & 5 & 4 & \overline{1} & 3 & \overline{2}\\
   \overline{3} & 2 & \overline{4} & 1 & \overline{5} & \overline{6} & \overline{7} & \colorbox{blue!35}{$\overline{8}$} & \colorbox{blue!20}8 & 7 & 6 & 5 & \overline{1} & 4 & \overline{2} & 3\\
   \overline{3} & \overline{4} & 2 & \overline{5} & 1 & \overline{6} & \overline{7} & \colorbox{blue!35}{$\overline{8}$} & \colorbox{blue!20}8 & 7 & 6 & \overline{1} & 5 & \overline{2} & 4 & 3 \\
   \overline{3} & \overline{4} & \overline{5} & 2 & \overline{6} & 1 & \overline{7} & \colorbox{blue!35}{$\overline{8}$} & \colorbox{blue!20}8 & 7 & \overline{1} & 6 & \overline{2} & 5 & 4 & 3 \\
   \overline{3} & \overline{4} & \overline{5} & \overline{6} & 2 & \overline{7} & 1 & \colorbox{blue!35}{$\overline{8}$} & \colorbox{blue!20}8 & \overline{1} & 7 & \overline{2} & 6 & 5 & 4 & 3\\
   \overline{3} & \overline{4} & \overline{5} & \overline{6} & \overline{7} & 2 & \colorbox{blue!35}{$\overline{8}$} & \colorbox{blue!35}1 & \colorbox{blue!20}{$\overline{1}$} & \colorbox{blue!20}8 & \overline{2} & 7 & 6 & 5 & 4 & 3\\
   \overline{3} & \overline{4} & \overline{5} & \overline{6} & \overline{7} & \colorbox{blue!35}{$\overline{8}$} & \colorbox{blue!35}2 & \colorbox{green!35}{$\overline{1}$} & \colorbox{green!35}1 & \colorbox{blue!20}{$\overline{2}$} & \colorbox{blue!20}8 & 7 & 6 & 5 & 4 & 3 \\
   \overline{3} & \overline{4} & \overline{5} & \overline{6} & \overline{7} & \colorbox{blue!35}{$\overline{8}$} & \overline{1} & 2 & \overline{2} & 1 & \colorbox{blue!20}8 & 7 & 6 & 5 & 4 & 3 \\
   \overline{3} & \overline{4} & \overline{5} & \overline{6} & \overline{7} & \colorbox{blue!35}{$\overline{1}$} & \colorbox{blue!35}{$\overline{8}$} & \colorbox{green!35}{$\overline{2}$} & \colorbox{green!35}2 & \colorbox{blue!20}8 & \colorbox{blue!20}{$1$} & 7 & 6 & 5 & 4 & 3 \\
   \overline{3} & \overline{4} & \overline{5} & \overline{6} & \overline{1} & \overline{7} & \colorbox{blue!35}{$\overline{2}$} & \colorbox{blue!35}{$\overline{8}$} & \colorbox{blue!20}8 & \colorbox{blue!20}{$2$} & 7 & 1 & 6 & 5 & 4 & 3\\
   \overline{3} & \overline{4} & \overline{5} & \overline{1} & \overline{6} & \overline{2} & \overline{7} & \colorbox{blue!35}{$\overline{8}$} & \colorbox{blue!20}8 & 7 & 2 & 6 & 1 & 5 & 4 & 3\\
   \overline{3} & \overline{4} & \overline{1} & \overline{5} & \overline{2} & \overline{6} & \overline{7} & \colorbox{blue!35}{$\overline{8}$} & \colorbox{blue!20}8 & 7 & 6 & 2 & 5 & 1 & 4 & 3\\
   \overline{3} & \overline{1} & \overline{4} & \overline{2} & \overline{5} & \overline{6} & \overline{7} & \colorbox{blue!35}{$\overline{8}$} & \colorbox{blue!20}8 & 7 & 6 & 5 & 2 & 4 & 1 & 3\\
   \overline{1} & \overline{3} & \overline{2} & \overline{4} & \overline{5} & \overline{6} & \overline{7} & \colorbox{blue!35}{$\overline{8}$} & \colorbox{blue!20}8 & 7 & 6 & 5 & 4 & 2 & 3 & 1\\
   \overline{1} & \overline{2} & \overline{3} & \overline{4} & \overline{5} & \overline{6} & \overline{7} & \colorbox{blue!35}{$\overline{8}$} & \colorbox{blue!20}8 & 7 & 6 & 5 & 4 & 3 & 2 & 1
    \end{pmatrix}
\end{equation*}
\end{footnotesize}
\end{enumerate}
\end{document}